\definecolor{red}{rgb}{1,0,0}
\newcommand{\vvirg}{ , \dots , }
\newcommand{\ootimes}{ \otimes \cdots \otimes }
\newcommand{\ttimes}{ \times \cdots \times }
\newcommand{\wwedge}{ \wedge \dots \wedge}
\newcommand{\textbigotimes}{{\textstyle \bigotimes}}
\newcommand{\calI}{\mathcal{I}}
\newcommand{\calJ}{\mathcal{J}}
\newcommand{\calM}{\mathcal{M}}
\newcommand{\calV}{\mathcal{V}}
\newcommand{\bbC}{\mathbb{C}}
\newcommand{\bbK}{\mathbb{K}}
\newcommand{\bbL}{\mathbb{L}}
\newcommand{\bbN}{\mathbb{N}}
\newcommand{\bbP}{\mathbb{P}}
\newcommand{\bbR}{\mathbb{R}}
\newcommand{\rmQ}{\mathrm{Q}}
\renewcommand{\phi}{\varphi}
\newcommand{\eps}{\varepsilon}
\renewcommand{\tilde}[1]{\widetilde{#1}}
\renewcommand{\hat}[1]{\widehat{#1}}
\renewcommand{\bar}[1]{\overline{#1}}
\newcommand{\id}{\mathrm{id}}
\newcommand{\rank}{\mathrm{rank}}
\DeclareMathOperator{\rk}{rk}
\DeclareMathOperator{\End}{End}
\DeclareMathOperator{\Sym}{Sym}
\newcommand{\Sub}{\mathrm{Sub}}
\DeclareMathOperator{\subrank}{Q}
\DeclareMathAccent{\wtilde}{\mathord}{largesymbols}{"65}
\DeclareMathOperator{\aQ}{\underaccent{\wtilde}{Q}}
\DeclareMathOperator{\asympsubrank}{\underaccent{\wtilde}{Q}}
\newcommand{\GL}{\mathrm{GL}}
\DeclareMathOperator{\Stab}{Stab}
\newcommand{\w}{\mathsf{W}}
\newcommand{\un}{\mathsf{I}}
\declaretheorem[name=Theorem, parent=section]{theorem}
\declaretheorem[name=Proposition, sibling=theorem]{proposition}
\declaretheorem[name=Lemma, sibling=theorem]{lemma}
\theoremstyle{definition}
\declaretheorem[name=Definition, sibling=theorem]{definition}
\declaretheorem[name=Remark, sibling=theorem]{remark}
\theoremstyle{remark}
\numberwithin{equation}{section}
\newcommand{\im}{\mathrm{im}}
\newcommand{\Gr}{\mathrm{Gr}}
\DeclareMathOperator{\pR}{PR}
\DeclareMathOperator{\sR}{SR}
\newcommand{\degengeq}{\unrhd}
\newcommand{\Cay}{\mathrm{Cay}}
\newcommand{\ext}{{(\eps)}}
\begin{document}

\begin{center}
	{\LARGE A Gap in the Subrank of Tensors}
\\[1cm] \large

\setlength\tabcolsep{0em}
\newcommand{\myPad}{\hspace{2em}}
\centerline{%
\begin{tabular}{c@{\myPad}c@{\myPad}c}
	Matthias Christandl & Fulvio Gesmundo & Jeroen Zuiddam\\[0.2em]
    \textsf{christandl@math.ku.dk} & \textsf{fgesmund@math.univ-toulouse.fr} & \textsf{j.zuiddam@uva.nl}\\[0.2em]
     University of Copenhagen &  Saarland University  & University of Amsterdam
\end{tabular}%
}

\vspace{9mm}

\large
%{\today}

\vspace{9mm}
\bf Abstract
\end{center}
\normalsize
\noindent
The subrank of tensors is a measure of how much a tensor can be ``diagonalized''. This parameter was introduced by Strassen to study fast matrix multiplication algorithms in algebraic complexity theory and is closely related to many central tensor parameters (e.g.~slice rank, partition rank, analytic rank, geometric rank, G-stable rank) and problems in combinatorics, computer science and quantum information theory. 
Strassen (J.~Reine Angew.\ Math., 1988) proved that there is a gap in the subrank when taking large powers under the tensor product: either the subrank of all powers is at most one, or it grows as a power of a constant strictly larger than one. In this paper, we precisely determine this constant for tensors of any order. Additionally, for tensors of order three, we prove that there is a second gap in the possible rates of growth. Our results strengthen the recent work of Costa and Dalai (J.~Comb.\ Theory, Ser.~A, 2021), who proved a similar gap for the slice rank. Our theorem on the subrank has wider applications by implying such gaps not only for the slice rank, but for any ``normalized monotone''. In order to prove the main result, we characterize when a tensor has a very structured tensor (the W-tensor) in its orbit closure. Our methods include degenerations in Grassmanians, which may be of independent interest.
\thispagestyle{empty}

\renewcommand{\footnotesize}{\normalsize}
\let\thefootnote\relax\footnotetext{Keywords: subrank, asymptotic subrank, tensor degeneration}
\let\thefootnote\relax\footnotetext{2020 Math. Subj. Class.: (primary) 15A69, (secondary) 14N07, 15A72, 68R05}

%\tableofcontents
%\newpage

\section{Introduction} \label{intro}

We prove a structural theorem about ``diagonalizing'' tensors and in particular about a tensor parameter called \emph{subrank} \cite{Str:RelativeBilComplMatMult}. This parameter was originally introduced to study fast matrix multiplication algorithms, and is closely related to many recently introduced tensor parameters, such as slice rank~\cite{tao}, analytic rank \cite{MR2773103,MR3964143}, geometric rank \cite{KopMosZui:GeomRankSubrankMaMu} and G-stable rank~\cite{MR4471036}, and to a variety of problems in combinatorics, computer science and quantum information theory. Our results improve on recent results of Costa and Dalai~\cite{DBLP:journals/jcta/CostaD21} on gaps in the slice rank and on bounds of Strassen from 1988 \cite{strassen1988asymptotic} on gaps in the subrank.

Informally, the subrank of a $k$-tensor measures how much the tensor can be ``diagonalized'' by taking linear combinations of its slices, in any of the $k$ directions (\autoref{subsec: defin subrank and powers}). For matrices, namely in the case $k=2$, the subrank coincides with the matrix rank, and in particular it is very well understood and easy to compute. On the other hand, for tensors of higher order, that is for $k\geq 3$, much is still unknown about the subrank. 

Motivated by various applications of tensor methods to problems with a recursive structure (for instance, the cap set problem in combinatorics \cite{MR3583358}) we are interested in the behaviour of the subrank when taking large Kronecker powers of a tensor. The notion of Kronecker product that we use is the natural generalization of the Kronecker product of two matrices: the product of two tensors is a tensor of the same order whose entries are all the pair-wise products of the entries of the two factors (\autoref{subsec: defin subrank and powers}).
In this paper, we show the following:
\begin{itemize}
    \item We prove a gap in the subrank of tensors when taking large powers: for every nonzero $k$-tensor $T$ over any field, either the subrank of every power of $T$ is $1$, or the subrank of the $N$-th power of $T$ is at least $(k/(k-1)^{(k-1)/k})^{N - o(N)}$ for every $n$.
    \item We prove a second gap for $3$-tensors: for every nonzero $3$-tensor $T$ over any field, either the subrank of every power of $T$ is $1$, or the subrank of the $N$-th power of $T$ grows as 
    \[
    (3/2^{2/3})^{N - o(N)} \approx (1.88\ldots)^{N-o(N)},
    \]or the subrank of the $N$-th power of $T$ is at least $2^N$ for every~$N$.
    \item We prove, as a consequence of the above results, that any ``normalized monotone'' has a gap as above. In particular this applies to slice rank and partition rank, but also other tensor parameters such as analytic rank, geometric rank and G-stable rank, thus extending a recent result of Costa and Dalai~\cite{DBLP:journals/jcta/CostaD21}.
    \item As a key ingredient for the proof of the above, we give a sufficient and necessary condition for any tensor to have a very structured tensor, namely the W-tensor of order $k$, in its orbit-closure.
    \item In the course of proving these results, we prove several properties of the partition rank and an equivalence between degeneration of tensors and degenerations of elements in certain Grassmanians.
\end{itemize}

Our results are based on methods from algebraic geometry and invariant theory, and in particular the study of degenerations of tensors, degenerations of subspaces of tensors (in Grassmannians), and orbit classifications. In the rest of the introduction we will provide more details on our results.

\subsection{Subrank and tensor powers}\label{subsec: defin subrank and powers}

Before discussing our main results, we set some basic notation, define the subrank and define the notion of tensor power that we use. 

Throughout this introduction, we let $\bbK$ be an arbitrary field (unless otherwise specified). We use the integer $k \geq 2$ to denote the order of our tensors. Let $\bbK^{n_1} \ootimes \bbK^{n_k}$ denote the space of tensors of order $k$ (i.e., $k$-tensors) with coefficients in~$\bbK$ and with dimensions $n_1, \ldots, n_k \in \bbN$. We let $e_1, e_2, \ldots, e_{n_i}$ denote the standard basis vectors in $\bbK^{n_i}$.

The subrank of a $k$-tensor $T \in \bbK^{n_1} \ootimes \bbK^{n_k}$, denoted by $\rmQ(T)$, is the largest integer~$r$ such that there are linear maps $\pi_i : \bbK^{n_i} \to \bbK^{r}$ with the property that 
\[
(\pi_1 \ootimes \pi_k)T = \sum_{i=1}^r e_i \ootimes e_i
\]
Intuitively, the tensor $T$ is ``diagonalized'' by taking linear combinations of the slices of $T$ according to the maps $\pi_i$. 
When $k=2$ the subrank $\subrank(T)$ coincides with the rank of $T$ as a matrix. (For $k\geq3$, however, the subrank does not coincide with the well-known tensor rank!) If $T = 0$ then the subrank is zero and if $T$ is not zero then the subrank is at least one. We will thus focus on non-zero tensors. The subrank of a tensor in $\bbK^{n_1} \ootimes \bbK^{n_k}$ is at most $\min_i n_i$.

For two $k$-tensors $T \in \bbK^{n_1} \ootimes \bbK^{n_k}$ and $S \in \bbK^{m_1} \ootimes \bbK^{m_k}$ their Kronecker product $T \boxtimes S \in \bbK^{n_1m_1} \ootimes \bbK^{n_km_k}$ is the $k$-tensor obtained by taking $T \otimes S \in \bbK^{n_1} \ootimes \bbK^{n_k} \otimes \bbK^{m_1} \ootimes \bbK^{m_k}$, regrouping the tensor factors into $(\bbK^{n_1} \otimes \bbK^{m_1}) \ootimes (\bbK^{n_k} \otimes \bbK^{m_k})$, and identifying $\bbK^{n_i} \otimes \bbK^{m_i}$ with $\bbK^{n_im_i}$. In other words, thinking of $T$ and $S$ as $k$-dimensional arrays of elements of $\bbK$, their tensor product $T \boxtimes S$ is the $k$-dimensional array whose coefficients are the pairwise products of the coefficients of $T$ and the coefficients of $S$. Having defined the product~$\boxtimes$ we can naturally take the $n$th power $T^{\boxtimes N} \in \bbK^{n_1^N} \ootimes \bbK^{n_k^N}$.

We will be interested in how the subrank $\subrank(T^{\boxtimes N})$ grows as $N$ grows, and we ask the natural question: What ``rates of growth'' are possible? We will prove that the possible rates of growth are more ``rigid'' than what one might a priori expect.

\subsection{A gap in the subrank of tensors}
Our main result is the following gap in the subrank of powers of tensors. Recall that $\bbK$ is an arbitrary field.

\begin{restatable}[Subrank gap]{theorem}{thmsubrankgap}\label{th:subrank-gap}
For every nonzero $T \in \bbK^{n_1} \ootimes \bbK^{n_k}$, one of the following is true: \begin{enumerate}[\upshape(a)]
    \item $\subrank(T^{\boxtimes N}) = 1$ for all $N$;
    \item $\subrank(T^{\boxtimes N}) \geq c_k^{N - o(N)}$ for all $N$, where $c_k = k /(k-1)^{(k-1)/k}$.
\end{enumerate}
\end{restatable}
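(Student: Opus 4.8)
The plan is to deduce \autoref{th:subrank-gap} from two independent ingredients: a \emph{structural dichotomy} and a \emph{computation}. Write $W_k := \sum_{j=1}^{k} e_1^{\otimes(j-1)}\otimes e_2\otimes e_1^{\otimes(k-j)}\in(\bbK^2)^{\otimes k}$ for the $W$-tensor of order $k$ (so $W_2$ is the rank-$2$ matrix $e_1\otimes e_2+e_2\otimes e_1$, and in general $W_k=e_2\otimes e_1^{\otimes(k-1)}+e_1\otimes W_{k-1}$ upon flattening the first factor), and let $\degenleq$ denote the degeneration preorder ($S\degenleq T$ iff $S$ lies in the orbit closure of a restriction of $T$). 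The ingredients are: (I) [structural] for every nonzero $k$-tensor $T$, if $\subrank(T^{\boxtimes N})\ge 2$ for some $N$ then $W_k\degenleq T$; and (II) [computation] $\subrank(W_k^{\boxtimes N})\ge c_k^{N-o(N)}$, equivalently $\asympsubrank(W_k)\ge c_k$. Granting these, the theorem follows quickly: since $\subrank$ is supermultiplicative under $\boxtimes$, either $\subrank(T^{\boxtimes N})=1$ for all $N$ — case (a) — or $\subrank(T^{\boxtimes N_0})\ge 2$ for some $N_0$, and then (I) gives $W_k\degenleq T$, hence $W_k^{\boxtimes N}\degenleq T^{\boxtimes N}$ for every $N$; as $\asympsubrank$ is monotone under degeneration (degeneration being asymptotically free), $\asympsubrank(T)\ge\asympsubrank(W_k)\ge c_k$ by (II), and by Fekete's lemma this is equivalent to $\subrank(T^{\boxtimes N})\ge c_k^{N-o(N)}$ for all $N$, i.e.\ case (b).

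Ingredient (II) is the more elementary of the two, and I would prove the lower bound $\subrank(W_k^{\boxtimes N})\ge c_k^{N-o(N)}$ by an explicit combinatorial degeneration. In the product basis the support of $W_k^{\boxtimes N}$ is the set of ordered partitions $[N]=B_1\sqcup\cdots\sqcup B_k$ (encoded as tuples $(v_1,\dots,v_k)$ of $0/1$-vectors with $v_1+\cdots+v_k=\mathbf 1$), all coefficients equal to $1$. To extract a diagonal $\langle r\rangle$ one selects a family $\mathcal D$ of such partitions that is \emph{recombination-free}: for each fixed $i$ the blocks $B_i$ over $(B_\bullet)\in\mathcal D$ are pairwise distinct, and no tuple obtained by taking the $i$-th block from possibly different members of $\mathcal D$ (not all equal) is again a partition of $[N]$. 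Fixing the common block-size profile to be the optimal composition of $N$ and thinning the partitions of that profile by a Salem--Spencer/Behrend-type selection — which destroys all recombinations at the cost of only a subexponential factor — yields $|\mathcal D|\ge c_k^{N-o(N)}$, and restricting $W_k^{\boxtimes N}$ via coordinate projections to the slices indexed by $\mathcal D$ produces, by recombination-freeness, exactly the diagonal tensor $\langle|\mathcal D|\rangle$. (The matching upper bound $\asympsubrank(W_k)\le c_k$, needed only to pin down $c_k$ exactly and not for the gap, follows by evaluating an appropriate Strassen support functional on $W_k$ at the uniform distribution on its $k$-point support, whose every one-dimensional marginal equals $(\tfrac{1}{k},\tfrac{k-1}{k})$ and contributes exactly $c_k$.)

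Ingredient (I) is the heart of the matter, and the direction we need — $\subrank(T^{\boxtimes N})\ge 2$ for some $N$ $\Rightarrow$ $W_k\degenleq T$ — is far harder than its converse (which is immediate: $W_k\degenleq T$ yields $\asympsubrank(T)\ge\asympsubrank(W_k)\ge c_k>1$). My plan is induction on the order $k$, mediated by a dictionary between degenerations of a $k$-tensor and degenerations, inside a Grassmannian, of its flattening. Fix the flattening $T^{\flat}\colon(\bbK^{n_1})^{*}\to\bbK^{n_2}\otimes\cdots\otimes\bbK^{n_k}$ and let $U_T:=\image(T^{\flat})\in\Gr(d,\bbK^{n_2}\otimes\cdots\otimes\bbK^{n_k})$ with $d=\rank(T^{\flat})$. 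One shows that $W_k\degenleq T$ is governed by whether $U_T$ can be degenerated — under $\GL_{n_2}\times\cdots\times\GL_{n_k}$ together with limits and passage to subspaces — to the $2$-plane $\langle e_1^{\otimes(k-1)},W_{k-1}\rangle$, the flattening of $W_k$. It then suffices to locate, after a generic restriction of $T$, a $2$-dimensional subspace of $(k{-}1)$-tensors inside a degeneration of $U_T$ spanned by a rank-one tensor and a tensor $S$ with $\asympsubrank(S)>1$; the inductive hypothesis supplies $W_{k-1}\degenleq S$, which lifts through the dictionary to $W_k\degenleq T$. The base case $k=2$ is the triviality that a rank-$\ge 2$ matrix restricts onto $W_2$.

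The main obstacle — and the reason the Grassmannian machinery is needed — is the \emph{descent from a power}: the hypothesis directly yields only $\langle 2\rangle\le T^{\boxtimes N}$, hence $W_k\degenleq T^{\boxtimes N}$, which by itself would prove only the inferior bound $\subrank(T^{\boxtimes N})\ge(c_k^{1/N})^{N-o(N)}$; to obtain the sharp constant one must produce $W_k$ from a \emph{single} copy of $T$. Equivalently, one needs the local-to-global principle: if every restriction of $T$ to a $2$-dimensional subspace in each factor has asymptotic subrank $1$, then so does $T$. The engine is a field-independent classification of the $2\times\cdots\times 2$ tensors with $\asympsubrank=1$ (a ``bad locus'' that must be avoided), combined with the Grassmannian-degeneration calculus to show that if $T$ does not lie in the corresponding locus then it admits a non-degenerate $2\times\cdots\times 2$ restriction $T'$, which already satisfies $W_k\degenleq T'$ by the classification. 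Carrying this out without algebraic closure — so that orbit closures, generic restrictions, and the classification are all field-independent — together with establishing the properties of the partition rank that make the Grassmannian dictionary work, is the technical core of the proof.
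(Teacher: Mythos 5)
Your overall architecture is the same as the paper's: deduce the gap from (II) the asymptotic subrank of $\w_k$ (which the paper cites from Strassen; your recombination-free/Salem--Spencer sketch is the standard proof of that lower bound) together with (I) a structural statement that a tensor with no rank-one flattening degenerates to $\w_k$, proved by induction on $k$ after a generic restriction to format $2\times\cdots\times2$, using a dictionary between tensor degenerations and degenerations of the flattening image in a Grassmannian (the paper's \autoref{thm: degenerations via grassmannians}). The deduction of the theorem from (I) and (II), and ingredient (II) itself, are sound.

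The genuine gap is in the inductive step of (I). You propose to locate, inside (a degeneration of) the flattening image $E_T$, a $2$-plane spanned by a \emph{rank-one} tensor and a tensor $S$ with $\w_{k-1}\degenleq S$, and then to ``lift through the dictionary''. First, such a plane need not exist: for $k\geq4$ a generic $2$-plane in $(\bbK^2)^{\otimes(k-1)}$ contains no rank-one tensor at all (the Segre has dimension $k-1<2^{k-1}-2$), and the paper never needs one --- it only needs $S$ with $\pR(S)\geq2$ and any linearly independent $S'$ (supplied by \autoref{lemma: characterization good flattening ranks}). Second, and more seriously, even granting such a plane $\langle S,R\rangle$, applying the inductive degeneration $g_\eps S=\w_{k-1}+O(\eps)$ drags $R$ along: in the limit the plane is $\langle\w_{k-1},P\rangle$ where $P$ is the leading term of $g_\eps R$ modulo $\w_{k-1}$, an arbitrary tensor, not the rank-one vector $v_0^{(1)}\ootimes v_0^{(k-1)}$ that identifies the flattening image of $\w_k$. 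Closing this requires an additional idea, which in the paper is \autoref{prop: degen with Stabw}: the stabilizer of $\w_{k-1}$ (a torus element of weights $(-1,k-1)$ plus the unipotent $h(s_1\vvirg s_k)$ with $\sum_i s_i=0$, with the square-free-monomial argument exploiting linear independence of $P$ from $\w_{k-1}$) degenerates any such $P$ to the rank-one vector while fixing $\w_{k-1}$. Your plan contains no substitute for this step. Your fallback ``engine'' --- a field-independent classification of $2\times\cdots\times2$ tensors with asymptotic subrank $1$, so that a non-degenerate restriction $T'$ satisfies $\w_k\degenleq T'$ ``by the classification'' --- is unavailable for $k\geq4$ (there are infinitely many $\GL_2^{\times k}$-orbits in $(\bbK^2)^{\otimes k}$), and the assertion that every $2^{\times k}$ tensor without a rank-one flattening degenerates to $\w_k$ is precisely the statement being proved, so that route is circular; the classification argument works only for $k=3$ (the paper's \autoref{lem:classification}). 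A smaller point: phrasing the induction hypothesis via $\asympsubrank(S)>1$ rather than $\pR(S)\geq2$ forces a simultaneous induction with the gap theorem in order $k-1$; the paper's purely structural induction avoids this.
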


So \autoref{th:subrank-gap} says that if we have a strict inequality $\subrank(T^{\boxtimes N}) > 1$ for \emph{any} $N$, then we can asymptotically ``boost'' this inequality to $\subrank(T^{\boxtimes N}) \geq c_k^{N - o(N)}$ for all $N$. 

The constant $c_k$ coincides with $2^{h(1/k)}$ where $h$ is the binary entropy function, defined by $h(p) = -p \log_2 p -{(1-p)}\log_2 {(1-p)}$ for $p \in (0,1)$, and $h(0) = h(1) = 0$. This constant appears in \autoref{th:subrank-gap} because it controls the rate of growth of the subrank of a special tensor called the W-tensor, denoted by $\w_k$: it is known \cite{strassen1991degeneration} that $\subrank(\w_k^{\boxtimes N}) = c_k^{N - o(N)}$. In particular, this fact implies that the constant $c_k$ in \autoref{th:subrank-gap} is optimal. For small values of $k$ we have $c_2 = 2$, $c_3 \approx 1.88988$, $c_4 \approx 1.75477$, and $c_5 \approx 1.64938$. For all $k$ we have $c_k > 1$, and $c_k$ is decreasing and converges to $1$ as $k$ diverges to infinity.

\subsection{A gap for partition rank and other normalized monotones}
With the same methods that we use to prove \autoref{th:subrank-gap}, we obtain gaps for other tensor parameters, and in particular we strengthen the recent result of Costa and Dalai \cite{DBLP:journals/jcta/CostaD21}. Costa and Dalai study a tensor parameter called slice rank, which was defined by Tao \cite{tao} and which we will denote by $\sR(T)$. They prove the following gap theorem:

\begin{theorem}[Slice rank gap \cite{DBLP:journals/jcta/CostaD21}]
 For every nonzero $T \in \bbK^{n_1} \ootimes \bbK^{n_k}$ exactly one of the following is true:
 \begin{enumerate}[\upshape(a)]
     \item $\sR(T^{\boxtimes N}) = 1$ for all $N$;
     \item $\sR(T^{\boxtimes N}) \geq c_k^{N - o(N)}$ for all $N$.
 \end{enumerate}
\end{theorem}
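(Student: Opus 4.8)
The plan is to deduce this from \autoref{th:subrank-gap} using two elementary properties of the slice rank together with one structural input about orbit closures. First, recall that slice rank is non-increasing under restriction and that $\sR(\langle r\rangle)=r$ for the diagonal tensor $\langle r\rangle:=\sum_{i=1}^{r}e_i\ootimes e_i$; since the maps $\pi_1,\dots,\pi_k$ realizing $\subrank(S)=r$ exhibit $\langle r\rangle$ as a restriction of $S$, these two facts give $\sR(S)\ge\subrank(S)$ for \emph{every} tensor $S$, in particular for every Kronecker power of $T$. Note also that (a) and (b) are mutually exclusive, since $c_k>1$ forces the ``$o(N)$'' in (b) to fail for a tensor satisfying (a).

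Now suppose (a) is false, say $\sR(T^{\boxtimes N_0})\ge 2$ for some $N_0$; I want to conclude (b). The crucial step is to promote this to $\subrank(T^{\boxtimes N_1})\ge 2$ for some $N_1$. This is where I would invoke the paper's characterization of when the W-tensor $\w_k$ lies in an orbit closure: a $k$-tensor of slice rank at least $2$ --- one whose slices cannot be simultaneously collapsed into a single slice --- must, after passing to a suitable Kronecker power, degenerate to $\w_k$. Applying this to $T^{\boxtimes N_0}$ gives an $m$ with $\w_k\degenleq(T^{\boxtimes N_0})^{\boxtimes m}=T^{\boxtimes N_0m}$. Since subrank is monotone under degeneration, $\subrank(T^{\boxtimes N_0m\ell})\ge\subrank(\w_k^{\boxtimes\ell})$ for all $\ell$, and by Strassen's result $\subrank(\w_k^{\boxtimes\ell})=c_k^{\ell-o(\ell)}\to\infty$; choosing $\ell$ large enough yields $N_1:=N_0m\ell$ with $\subrank(T^{\boxtimes N_1})\ge 2$.

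Once $\subrank(T^{\boxtimes N_1})>1$, the tensor $T$ does not satisfy alternative (a) of \autoref{th:subrank-gap}, hence it satisfies alternative (b): $\subrank(T^{\boxtimes N})\ge c_k^{N-o(N)}$ for all $N$. Combined with $\sR(T^{\boxtimes N})\ge\subrank(T^{\boxtimes N})$ from the first paragraph, this gives $\sR(T^{\boxtimes N})\ge c_k^{N-o(N)}$ for all $N$, which is alternative (b) for the slice rank, completing the argument modulo the structural step. The same reasoning works verbatim with $\sR$ replaced by any ``normalized monotone'' $f$ (using $f(\langle r\rangle)=r$ and monotonicity under restriction), which is how the paper also obtains gaps for partition rank, analytic rank, geometric rank, and G-stable rank.

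The hard part is the structural step: showing that slice rank at least $2$ for some Kronecker power forces $\w_k$ into the orbit closure of some Kronecker power. All the quantitative content --- the sharp base $c_k$ and the subexponential error --- is then inherited for free from \autoref{th:subrank-gap} and from $\subrank(\w_k^{\boxtimes\ell})=c_k^{\ell-o(\ell)}$, so the difficulty is concentrated entirely in this orbit-closure analysis, which the paper carries out via degenerations in Grassmannians. A final subtlety is that everything must be field-independent: degeneration and orbit-closure arguments are cleanest over algebraically closed fields, so one either phrases the W-tensor criterion combinatorially in terms of the support of $T$ in a well-chosen basis, or descends by base change and specialization, and one must check that the implied $o(N)$ bounds are uniform in the tensor.
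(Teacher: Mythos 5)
Your overall plan---deduce the slice rank gap from \autoref{th:subrank-gap} via $\sR \geq \subrank$---breaks down at exactly the step you flag as crucial, and the problem is not just that the step is hard: it is false as stated. The paper's criterion for degenerating to the W-tensor (\autoref{thm: degeneration to w}) requires that \emph{no flattening of any kind} has rank one, i.e.\ $\pR(T)\geq 2$; you have replaced this by ``slice rank at least $2$'', which only rules out rank-one \emph{single-index} flattenings. For $k\geq 4$ these conditions genuinely differ. Take $T=\sum_{i,j\in\{1,2\}} e_i\otimes e_i\otimes e_j\otimes e_j$, i.e.\ $\un_{2,2}$ placed on factors $(1,2)$ tensored with $\un_{2,2}$ on factors $(3,4)$. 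Then $\sR(T)=2$ (no $p$-flattening has rank one), but the $\{1,2\}$-flattening of $T$ has rank one, hence so does that of every power $T^{\boxtimes N}$. Consequently $\pR(T^{\boxtimes N})=1$ and $\subrank(T^{\boxtimes N})=1$ for all $N$, and no Kronecker power of $T$ can degenerate to $\w_4$, since all flattening ranks of $\w_4$ equal $2$ and flattening ranks are non-increasing under degeneration. So your promotion ``$\sR\geq 2$ at some power $\Rightarrow$ $\subrank\geq 2$ at some power'' is impossible for this tensor, and more fundamentally the strategy of inheriting the slice-rank gap ``for free'' from the subrank gap cannot cover it: its asymptotic subrank is $1$ while the slice rank of its powers does grow (its asymptotic slice rank is $2$), so case (b) for $\sR$ holds for reasons invisible to $\subrank$.

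This is precisely why the paper does not prove the Costa--Dalai statement this way: it cites it, and what it derives from \autoref{th:subrank-gap} is the weaker-hypothesis version (\autoref{th:f-gap}, \autoref{rem:norm-mon}), namely the gap for normalized monotones under the assumption that \emph{no} flattening $T_I$ has rank one; the ``middle case'' of a rank-one flattening for a grouped $I$ with $2\leq |I|\leq k-2$ is exactly what the subrank route misses, and Costa--Dalai handle slice rank by the Tao--Sawin combinatorial method instead. Your argument does go through for $k=3$, where slice rank one and partition rank one coincide (every flattening is a single-index one up to transpose); but even there your detour is unnecessary: once (a) fails, $T$ itself has no rank-one flattening, so \autoref{thm: degeneration to w} applies directly to $T$ (no Kronecker power is needed), and then \autoref{th:subrank-gap} plus $\sR\geq\subrank$ finishes, which is the paper's derivation. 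To repair your proposal for $k\geq 4$ you would need a separate argument for tensors whose only rank-one flattenings are grouped ones, and that argument cannot factor through subrank or through degeneration to $\w_k$.
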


As a consequence of \autoref{th:subrank-gap}, we extend the above result to partition rank. The partition rank was defined by Naslund~\cite{DBLP:journals/jcta/Naslund20a} as a natural variation on the slice rank. Denote by $\pR(T)$ the partition rank of the tensor $T$; see \autoref{sec:basic} for the precise definition. Partition rank is at most slice rank. We prove:

\begin{restatable}[Partition rank gap]{theorem}{thmprgap}\label{th:pr-gap} 
For every nonzero $T \in \bbK^{n_1} \ootimes \bbK^{n_k}$ exactly one of the following is true:
 \begin{enumerate}[\upshape(a)]
     \item $\pR(T^{\boxtimes N}) = 1$ for all $N$;
     \item $\pR(T^{\boxtimes N}) \geq c_k^{N - o(N)}$ for all $N$.
 \end{enumerate}
\end{restatable}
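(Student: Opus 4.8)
The plan is to deduce \autoref{th:pr-gap} from the subrank gap \autoref{th:subrank-gap}, using only elementary properties of the partition rank together with the structural result on the W-tensor established earlier in the paper. The properties of $\pR$ I would use, all recorded in \autoref{sec:basic}, are: $\pR$ is monotone under restriction; $\pR$ is submultiplicative, $\pR(S\boxtimes T)\le \pR(S)\pR(T)$; and $\pR$ of the diagonal tensor $\langle r\rangle := \sum_{i=1}^{r} e_i\ootimes e_i$ equals $r$. The latter two facts give at once the inequality $\subrank(T)\le \pR(T)$ for every $T$: if $\subrank(T)=r$ then $\langle r\rangle$ is a restriction of $T$, so $r=\pR(\langle r\rangle)\le\pR(T)$.

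First I would settle the trivial alternative. Since every $T^{\boxtimes N}$ is nonzero we have $\pR(T^{\boxtimes N})\ge 1$; and if $\pR(T)=1$ then submultiplicativity gives $\pR(T^{\boxtimes N})\le \pR(T)^{N}=1$, so $\pR(T^{\boxtimes N})=1$ for all $N$, which is alternative (a). Hence it remains to treat the case $\pR(T)\ge 2$ and show that then alternative (b) holds.

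The key step is to rule out alternative (a) of \autoref{th:subrank-gap} for such $T$. Here I would invoke the paper's characterization of which tensors have $\w_k$ in an orbit-closure: a tensor with $\pR(T)\ge 2$ is non-degenerate enough that $\w_k$ lies in the orbit-closure of $T^{\boxtimes m}$ for some $m$, hence $\w_k\degenleq T^{\boxtimes m}$. Since degeneration is compatible with $\boxtimes$ and the subrank is monotone under degeneration, $\subrank(T^{\boxtimes mj})\ge \subrank(\w_k^{\boxtimes j})=c_k^{\,j-o(j)}$, which exceeds $1$ once $j$ is large (using $\subrank(\w_k^{\boxtimes j})=c_k^{\,j-o(j)}$ from \cite{strassen1991degeneration} and $c_k>1$). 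Thus $\subrank(T^{\boxtimes N})=1$ fails for some $N$, so \autoref{th:subrank-gap} forces its alternative (b): $\subrank(T^{\boxtimes N})\ge c_k^{\,N-o(N)}$ for all $N$. Combining with $\subrank\le\pR$ yields $\pR(T^{\boxtimes N})\ge c_k^{\,N-o(N)}$, i.e.\ alternative (b) of \autoref{th:pr-gap}. The two alternatives are mutually exclusive because $c_k>1$, so in case (b) the value $\pR(T^{\boxtimes N})$ grows without bound and in particular is eventually larger than $1$.

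Almost all of the content sits in the structural result invoked above; the rest is bookkeeping. I expect the main obstacle to be precisely that input, namely establishing that "$\pR(T)\ge 2$" meets the sufficient condition for $\w_k$ to appear in the orbit-closure of some power of $T$ — equivalently, that $\asympsubrank(T)=1$ already forces $T$ to be a partition-rank-one tensor. One might hope to bypass the orbit-closure machinery by routing the argument through the slice rank gap theorem and the bound $\pR\le\sR$, but this does not work: $\pR\le\sR$ points the wrong way for transporting a lower bound back to $\pR$, so passing through the subrank (and hence through the W-tensor structural theorem) really does seem to be needed.
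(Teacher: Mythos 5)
Your argument is, in substance, the paper's own proof of \autoref{th:pr-gap}: handle $\pR(T)=1$ directly, and for $\pR(T)\geq 2$ pass through the subrank gap and the inequality $\subrank \leq \pR$ (\autoref{lem:subrank}, which you re-derive correctly from \autoref{lem:pr-norm} and \autoref{lem:pr-mon}). Your way of ruling out case (a) of \autoref{th:subrank-gap} — re-invoking the degeneration to $\w_k$ — is exactly the mechanism the paper has already packaged into \autoref{th:gap-flattening}: for nonzero $T$, $\pR(T)\geq 2$ is the same as ``no flattening of rank one'', and that theorem then gives $\subrank(T^{\boxtimes N})\geq c_k^{N-o(N)}$ outright (over an arbitrary field, the field-extension step being handled by \autoref{lem:field-ext}); also note \autoref{thm: degeneration to w} gives $T\degengeq \w_k$ for $T$ itself, no power $T^{\boxtimes m}$ is needed. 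So the route is the same; citing \autoref{th:gap-flattening} instead of re-running the W-tensor argument just shortens the write-up.

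The one point that needs correcting is your claimed lemma that $\pR$ is submultiplicative, $\pR(S\boxtimes T)\le \pR(S)\pR(T)$, which you say is recorded in \autoref{sec:basic}. It is not stated there, and it is in fact false in general: take $k=3$, $S=e_1\otimes(e_1\otimes e_1+e_2\otimes e_2)$ and $S'=e_1\otimes e_1\otimes e_1+e_2\otimes e_1\otimes e_2$, so $\pR(S)=\pR(S')=1$ (rank-one flattenings at factors $1$ and $2$ respectively); since flattening ranks multiply under $\boxtimes$, the flattening ranks of $S\boxtimes S'$ at $\{1\},\{2\},\{3\}$ are $2,2,4$, so no flattening of $S\boxtimes S'$ has rank one and $\pR(S\boxtimes S')\geq 2>\pR(S)\pR(S')$. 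Fortunately the special case you actually need is true and immediate: if $\pR(T)=1$ then $\rk(T_I)=1$ for some fixed $I$, and $\rk\bigl((T^{\boxtimes N})_I\bigr)=\rk(T_I)^N=1$, so $\pR(T^{\boxtimes N})=1$ for all $N$. With that substitution your proof is correct and coincides with the paper's.
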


In fact, we obtain the above kind of gap for a general class of tensors parameters that includes the slice rank, partition rank, geometric rank, normalized analytic rank, G-stable rank and subrank. 
To discuss this class of tensor parameters we need some concepts that we will now discuss. 

For two tensors $T \in \bbK^{n_1} \ootimes \bbK^{n_k}$ and $S \in \bbK^{m_1} \ootimes \bbK^{m_k}$ we let $T \geq S$ and say ``$T$ restricts to $S$\,'' if there are linear maps~$\pi_i : \bbK^{n_i} \to \bbK^{m_i}$ such that $(\pi_1 \ootimes \pi_k) T = S$. Moreover, for any $r \in \bbN$,  define $\un_{k,r} = \sum_{i=1}^r e_i \ootimes e_i \in \bbK^{r} \ootimes \bbK^{r}$: the tensor $\un_{k,r}$ is often called the ``identity tensor'' or the ``unit tensor'' of order $k$ and rank $r$. Note that the subrank~$\subrank(T)$ is the largest number $r$ such that $T \geq \un_{k,r}$.

The class of tensor parameters we consider is as follows. Let $f$ be a function from the set of tensors of order $k$ to $\bbR_{\geq 0}$. We call $f$ a \emph{normalized monotone} if $f(S) \leq f(T)$ whenever $S$ is a restriction of $T$, and $f(\un_{k,r}) = r$ for all $r \in \bbN$. It follows directly that $f(T) \geq \subrank(T)$ for any tensor $T$.

For any $k$-tensor $T \in \bbK^{n_1} \ootimes \bbK^{n_k}$ the \emph{flattenings} $T_I$ of $T$ are the $2$-tensors obtained by grouping the factors $\bbK^{n_i}$ into two groups: $T_I \in (\textbigotimes_{i \in I} \bbK^{n_i}) \otimes (\textbigotimes_{j \notin I} \bbK^{n_j})$. We prove:
\begin{restatable}[Gap for normalized monotones]{theorem}{thmgeneralgap}\label{th:f-gap}
 Let $f$ be any normalized monotone. For every~$T \in \bbK^{n_1} \ootimes \bbK^{n_k}$, if there is no flattening of $T$ of rank one, then $f(T^{\boxtimes N}) \geq c_k^{N- o(N)}$ for all $N$.
\end{restatable}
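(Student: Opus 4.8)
The plan is to deduce \autoref{th:f-gap} from the subrank gap \autoref{th:subrank-gap} together with our structural description of the tensors having $\w_k$ in their orbit closure. As observed right after the definition, every normalized monotone satisfies $f(U) \geq \subrank(U)$ for all tensors $U$; hence $f(T^{\boxtimes N}) \geq \subrank(T^{\boxtimes N})$ for every $N$, and it is enough to prove that $\subrank(T^{\boxtimes N}) \geq c_k^{N-o(N)}$ for all $N$ whenever $T$ has no flattening of rank one (here $T$ is tacitly nonzero, as in the companion statements, since otherwise the conclusion fails while the hypothesis holds). Alternative (a) of \autoref{th:subrank-gap} says $\subrank(T^{\boxtimes N}) = 1$ for all $N$, which, by supermultiplicativity of the subrank, is the same as $\asympsubrank(T) = 1$. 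So it suffices to show that the hypothesis forces $\asympsubrank(T) > 1$: then alternative (a) fails, hence alternative (b) of \autoref{th:subrank-gap} holds, and this is precisely the desired bound.

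To prove $\asympsubrank(T) > 1$, I would first apply our characterization of when the W-tensor lies in an orbit closure: its hypothesis is exactly ``$T$ has no flattening of rank one'', so it gives $\w_k \in \overline{\GL \cdot T}$, i.e.\ $T$ degenerates to $\w_k$. Then I would invoke two standard ingredients. (i) Asymptotic subrank is non-decreasing under degeneration: if $U$ degenerates to $V$ then $\asympsubrank(U) \geq \asympsubrank(V)$. This is part of the standard theory of tensor degeneration --- a degeneration of $U^{\boxtimes N}$ to $V^{\boxtimes N}$ has degree linear in $N$, so it can be simulated by an honest restriction from $U^{\boxtimes N}$ tensored with a unit tensor of rank polynomial in $N$, and the polynomial factor is absorbed into the exponential rate. (ii) $\asympsubrank(\w_k) = c_k$, which is Strassen's computation recalled in the introduction (equivalently, $\subrank(\w_k^{\boxtimes N}) = c_k^{N - o(N)}$). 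Combining these, $\asympsubrank(T) \geq \asympsubrank(\w_k) = c_k > 1$, which closes the argument: alternative (b) of \autoref{th:subrank-gap} holds, so $f(T^{\boxtimes N}) \geq \subrank(T^{\boxtimes N}) \geq c_k^{N-o(N)}$ for all $N$.

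The genuine obstacle is not this short deduction but the ingredient on which it rests, namely the orbit-closure characterization of $\w_k$: that is the new structural theorem, and proving it is where the work lies, via the announced equivalence between degenerations of tensors and degenerations of subspaces in Grassmannians, together with the properties of partition rank established beforehand. The remaining points --- simulating a degeneration by a restriction with only polynomial overhead (so that the $o(N)$ truly absorbs it), and turning an asymptotic estimate into a statement valid for every $N$ (supermultiplicativity of the subrank plus Fekete's lemma) --- are routine and are in any case already packaged inside \autoref{th:subrank-gap}, as is the harmless exclusion of the zero tensor.
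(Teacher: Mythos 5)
Your proposal follows essentially the same route as the paper: internally, the paper first proves (\autoref{th:gap-flattening}) that absence of a rank-one flattening forces $\subrank(T^{\boxtimes N}) \geq c_k^{N-o(N)}$ --- via \autoref{thm: degeneration to w}, \autoref{lem:subrank-w} and \autoref{lem:degen-asymp-subrank}, exactly the three ingredients you name --- and then deduces \autoref{th:f-gap} from the restriction $T^{\boxtimes N} \geq \un_{k,r_N}$ with $r_N = \subrank(T^{\boxtimes N})$, i.e.\ from the inequality $f \geq \subrank$ you invoke. Your detour through the two alternatives of \autoref{th:subrank-gap} (ruling out alternative (a) via $\asympsubrank(T) > 1$ and then reading off (b)) is logically sound and amounts to the same thing; your handling of the zero tensor matches the paper's tacit assumption.

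The one step you gloss over is the base field. \autoref{th:f-gap} is stated over an arbitrary field, whereas \autoref{thm: degeneration to w} and the whole degeneration machinery are proved for algebraically closed $\bbK$, so applying the orbit-closure characterization directly to $T$ is not justified in general. The paper bridges this with \autoref{lem:field-ext}, Strassen's (nontrivial) theorem that the asymptotic subrank is unchanged under field extension: one extends scalars to $\bar{\bbK}$ (which changes neither the flattening ranks nor the hypothesis), obtains the degeneration to $\w_k$ and the asymptotic lower bound there, and transfers the bound on $\subrank(T^{\boxtimes N})$ back to $\bbK$. This transfer is genuinely needed, since the subrank over $\bbK$ could a priori be smaller than over $\bar{\bbK}$, and the inequality $f \geq \subrank$ must be applied over the field on which $f$ is defined. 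With that reduction inserted, your argument coincides with the paper's proof.
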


\begin{remark}\label{rem:norm-mon}
The slice rank and partition rank (and many other parameters) have the property that they are bounded from above by the ranks of the flattenings. For any normalized monotone~$f$ that is bounded from above by the flattening ranks we have that for every nonzero tensor $T \in \bbK^{n_1} \ootimes \bbK^{n_k}$ exactly one of the following is true:
\begin{enumerate}[\upshape(a)]
    \item $f(T^{\boxtimes N}) = 1$ for all $N$;
    \item $f(T^{\boxtimes N}) \geq c_k^{N- o(N)}$ for all $N$.
\end{enumerate}
\end{remark}

\subsection{A second gap in the subrank for tensors of order three}

For the special case of tensors of order three, we prove a stronger version of \autoref{thm: degeneration to w}.
Recall that \autoref{thm: degeneration to w} states that for any tensor $T$ of order $k$ there are two possibilities for the rate of growth of the subrank under taking large tensor powers: either $\subrank(T^{\boxtimes N}) \leq 1$ for all~$N$, or $\subrank(T^{\boxtimes N}) \geq c_k^{N - o(N)}$ for all $N$, where $c_k = k/(k-1)^{k/(k-1)}$. In other words, the subrank of $T^{\boxtimes N}$ either stays at most $1$ for all $N$, or it grows at least like~$c_k^{N - o(N)}$, so that there is a gap in the possible rates of growth.

For tensors of order three, we prove that there is a \emph{second} gap in the possible rates of growth of the subrank: if the rate of growth is strictly larger than $c_3 = 3/2^{3/2} \approx 1.88$, then it is at least $2$. More precisely:

\begin{restatable}{theorem}{thmasympsubtrich}
\label{th:asympsubtrich}
For every nonzero $T \in \bbK^{n_1} \otimes \bbK^{n_2} \otimes \bbK^{n_3}$ exactly one of the following is true:
\begin{enumerate}[\upshape(a)]
    \item $\subrank(T^{\boxtimes N}) = 1$ for all $N$;
    \item $\subrank(T^{\boxtimes N}) = c_3^{N - o(N)}$ for all $N$, where $c_3 = 3/2^{2/3} \approx 1.88$;
    \item $\subrank(T^{\boxtimes N}) \geq 2^{N-o(N)}$ for all $N$.
\end{enumerate}
\end{restatable}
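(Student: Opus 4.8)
The plan is to phrase everything in terms of the asymptotic subrank $\asympsubrank(T) = \lim_{N\to\infty}\subrank(T^{\boxtimes N})^{1/N}$, which exists by Fekete's lemma because $\subrank$ is super-multiplicative under $\boxtimes$; in this language the three alternatives become $\asympsubrank(T) = 1$, $\asympsubrank(T) = c_3$, and $\asympsubrank(T) \ge 2$. Since $1 < c_3 < 2$, these are mutually exclusive, so ``exactly one'' is automatic and it suffices to prove that no nonzero $3$-tensor has $\asympsubrank(T)$ in $(1,c_3)\cup(c_3,2)$. The interval $(1,c_3)$ is already ruled out by \autoref{th:subrank-gap} (equivalently \autoref{thm: degeneration to w}) with $k=3$: if $\subrank(T^{\boxtimes N})>1$ for some $N$ then $\asympsubrank(T)\ge c_3$. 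So the entire new content is the \emph{second} gap, which I would state as the implication: if $\asympsubrank(T) < 2$ then $\asympsubrank(T)\le c_3$. Combined with the first gap this forces $\asympsubrank(T)\in\{1,c_3\}$.

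First I would isolate the two ``anchor'' facts the argument turns on. On one side, $\asympsubrank(\w_3) = c_3$ (Strassen \cite{strassen1991degeneration}); in particular $\w_3$ realizes case~(b), and $\asympsubrank$ is monotone under asymptotic restriction, so any $T$ that is an asymptotic restriction of $\w_3$ has $\asympsubrank(T)\le c_3$. On the other side, $\asympsubrank(\un_{3,2}) = 2$, and moreover $\subrank(\w_3\oplus\w_3)\ge\subrank(\w_3)+\subrank(\w_3)=2$ because $\subrank$ is super-additive over direct sums (take the direct sum of the rank-one restrictions of the two blocks) and $\subrank(\w_3)=1$; hence any $T$ that restricts to $\un_{3,2}$, or to two disjoint copies of $\w_3$, has $\subrank(T)\ge 2$ and therefore $\asympsubrank(T)\ge 2$, and the same conclusion holds if $\un_{3,2}$ occurs merely as an asymptotic restriction of $T$.

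The heart of the proof is then a sharpening of \autoref{thm: degeneration to w} for $k=3$, which I would aim to prove in the following form: for every nonzero $3$-tensor $T$, either $\un_{3,2}$ is an asymptotic restriction of $T$ (and then we are in case~(c)), or $T$ is an asymptotic restriction of $\w_3$ (and then $\asympsubrank(T)\le c_3$). To establish this dichotomy I would combine the characterization supplied by \autoref{thm: degeneration to w} of when $\w_3$ lies in the orbit closure of a restriction of $T$ with a classification of the $\GL$-orbit closures of $3$-tensors in the regime ``between the degenerate tensors and $\un_{3,2}$''. Concretely, assuming that no Kronecker power $T^{\boxtimes M}$ restricts to $\un_{3,2}$ and that no restriction of $T$ contains two disjoint copies of $\w_3$, I would argue --- passing to a well-chosen basis and using the equivalence between degenerations of tensors and degenerations of subspaces in Grassmannians advertised in the introduction --- that $T$, and with uniform bounds each $T^{\boxtimes M}$, degenerates from a block built only out of one $\w_3$ and lower-order pieces; its support in that basis then has normalized entropy at most $h(1/3)$, and Strassen's support functionals yield $\asympsubrank(T)\le 2^{h(1/3)} = c_3$. (The multiplicativity and tightness of support functionals on oblique tensors give $\asympsubrank(\w_3^{\boxtimes s}) = c_3^{\,s}$, which one uses to make the ``asymptotic restriction of $\w_3$'' statement quantitative.)

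The step I expect to be the main obstacle is precisely this structure theorem, for two intertwined reasons. First, it is inherently an \emph{asymptotic and power-stable} statement: the hypothesis $\asympsubrank(T)<2$ has to be exploited at all Kronecker powers simultaneously, so no finite orbit classification of $T$ alone suffices; one genuinely needs a $\boxtimes$-compatible certificate, such as a support/entropy bound or membership in a restriction-closed, $\boxtimes$-closed family avoiding $\un_{3,2}$, and one must check that the relevant support-functional bound is valid over an arbitrary field $\bbK$. Second, there is a real mismatch between the orbit-closure (degeneration) flexibility used to get the lower bound $\asympsubrank\ge c_3$ and the restriction-only nature of the obstruction $T\not\ge\un_{3,2}$: one must rule out that some degeneration of some power of $T$ ``sneaks past'' $\un_{3,2}$, which is exactly where I expect the Grassmannian-degeneration equivalence, together with a careful enumeration of the minimal orbit closures bordering that of $\un_{3,2}$ (with $\w_3$ sitting at the edge), to carry the decisive weight.
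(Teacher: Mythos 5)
Your reduction of the statement to two gaps in the asymptotic subrank is fine, and the first gap is indeed just \autoref{th:subrank-gap}; the problem is that the entire content of the theorem is the second gap, and there your text is a plan rather than a proof. The dichotomy you need --- ``either $\un_{3,2}$ is an (asymptotic) restriction of $T$, or $T$ is an asymptotic restriction of $\w_3$'' --- is exactly the step you defer to an unproven structure theorem, and the route you sketch for it (a classification of orbit closures ``bordering'' that of $\un_{3,2}$, a power-stable support/entropy certificate, and Strassen's support functionals over an arbitrary field) is not carried out: you give no argument producing a global basis in which the support of $T$ (let alone of every $T^{\boxtimes M}$) is confined to one $\w_3$-block plus lower-order terms, no justification that the auxiliary hypothesis ``no restriction of $T$ contains two disjoint copies of $\w_3$'' leads anywhere, and no verification that the support-functional upper bound applies over $\bbK$. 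So as written there is a genuine gap at the decisive point.

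It is also worth noting that the difficulty you anticipate (``no finite orbit classification of $T$ alone suffices; one needs a $\boxtimes$-compatible certificate'') does not arise in the paper's argument, which proves a stronger, completely non-asymptotic trichotomy (\autoref{thm: subrank 2 for 3tensors}): a nonzero $3$-tensor either has a rank-one flattening, or is isomorphic to $\w_3$, or restricts to $\un_{3,2}$ already at the level of $T$ itself. The proof takes a generic restriction $T'$ of $T$ to $\bbK^2\otimes\bbK^2\otimes\bbK^2$ (using \autoref{prop: good flattenings under restriction} to preserve partition rank $\geq 2$), invokes the classical orbit classification there (\autoref{lem:classification}); if $T'\cong\un_{3,2}$ one is immediately in case (c) with $\subrank(T)\geq 2$, hence $\subrank(T^{\boxtimes N})\geq 2^N$ by super-multiplicativity; if $T'\cong\w_3$, then the Cayley hyperdeterminant vanishes on all $2\times2\times2$ restrictions of $T$, and an explicit computation (\autoref{lem:M} and \autoref{lemma: set theoretic cayley module}) shows that this vanishing together with $\pR(T)\geq2$ forces $T$ to lie in the tangential variety $\tau(n_1,n_2,n_3)$, i.e.\ in the orbit closure of $\w_3$, so $\w_3\degengeq T$ and $T\geq\w_3$, and then \autoref{lem:subrank-w} plus monotonicity of asymptotic subrank under degeneration pins $\subrank(T^{\boxtimes N})=c_3^{N-o(N)}$. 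In short, the paper replaces your asymptotic certificate by an exact hyperdeterminantal membership test, which is precisely the ingredient missing from your proposal; without it (or a worked-out substitute), the second gap is not established.
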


So \autoref{th:asympsubtrich} not only tells us that if $\subrank(T^{\boxtimes N}) > 1$ for some $N$, then $\subrank(T^{\boxtimes N}) \geq c_3^{N - o(N)}$ for all $N$, but also that if $\subrank(T^{\boxtimes N}) \geq d^{N - o(N)}$ for some constant $d > c_3$, then $d \geq 2$. In fact, if~$\bbK$ is algebraically closed, we can prove that the lower bound in case (c) of \autoref{th:asympsubtrich} is~$2^N$ rather than $2^{N - o(N)}$.

\subsection{Our gaps in terms of asymptotic subrank}
The result of \autoref{th:subrank-gap} and the other results of this section can be phrased uniformly in terms of the asymptotic subrank of a tensor, another tensor parameter introduced by Strassen \cite{strassen1988asymptotic}, see also \cite{cvz}.
The \emph{asymptotic subrank} of $T \in \bbK^{n_1} \ootimes \bbK^{n_k}$ is the limit $\aQ(T) \coloneqq \lim_{N\to\infty} \subrank(T^{\boxtimes N})^{1/N}$ and thus describes the asymptotic rate of growth of the subrank when taking large powers of $T$. This limit exists by a result called Fekete's Lemma, and can be replaced by a supremum over $N \in \bbN$. The asymptotic subrank a priori can take any real value in the closed interval $[1,\min_i ( n_i)]$. It is easy to see that $\aQ(\un_{k,r}) = r$. \autoref{th:subrank-gap} can be phrased in terms of asymptotic subrank as follows (by directly applying the definition of asymptotic subrank):

\begin{restatable}{theorem}{thmasymsubrank}\label{th:asysubrank}
For every nonzero $T \in \bbK^{n_1} \ootimes \bbK^{n_k}$, one of the following is true: 
\begin{enumerate}[\upshape(a)]
    \item $\aQ(T) = 1$;
    \item $\aQ(T) \geq c_k$ where $c_k = k /(k-1)^{(k-1)/k}$.
\end{enumerate}
\end{restatable}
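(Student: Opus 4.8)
This theorem is exactly \autoref{th:subrank-gap} rephrased through the definition $\aQ(T) = \lim_{N\to\infty}\subrank(T^{\boxtimes N})^{1/N}$, so the plan is simply to unwind that definition. If case~(a) of \autoref{th:subrank-gap} holds then every term of the sequence equals $1$, hence $\aQ(T) = 1$, which is case~(a) here; if case~(b) holds then $\aQ(T) = \lim_{N\to\infty}\subrank(T^{\boxtimes N})^{1/N} \ge \lim_{N\to\infty}\bigl(c_k^{N-o(N)}\bigr)^{1/N} = c_k$, which is case~(b) here. Nothing beyond \autoref{th:subrank-gap} is required; the two statements are equivalent.

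For completeness I indicate how I would instead derive the dichotomy directly at the level of $\aQ$, since that is where the content lies. The first observation is that case~(a) occurs exactly when some flattening $T_I$ has rank $1$: from $T \ge \un_{k,r}$ one gets $T_I \ge \un_{2,r}$, hence $\subrank(T) \le \min_I \rk(T_I)$, and since $(T^{\boxtimes N})_I = (T_I)^{\otimes N}$ and matrix rank is multiplicative under Kronecker products, a rank-$1$ flattening of $T$ forces $\subrank(T^{\boxtimes N}) = 1$ for all $N$. In the remaining case no flattening of $T$ has rank $1$ — in particular all $n_i \ge 2$ — and here I would invoke the orbit-closure criterion advertised in the introduction, namely the W-tensor degeneration result \autoref{thm: degeneration to w}, to conclude that $\w_k$ lies in the $\GL_{n_1}\times\cdots\times\GL_{n_k}$-orbit closure of $T$. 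Asymptotic subrank is monotone under degeneration, so this gives $\aQ(T) \ge \aQ(\w_k)$, and $\aQ(\w_k) = c_k = 2^{h(1/k)}$ by \cite{strassen1991degeneration}; hence case~(b). Since $c_k > 1$ the two cases are mutually exclusive.

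The step I expect to be the main obstacle is the borrowed one: showing that a $k$-tensor with no rank-$1$ flattening degenerates to $\w_k$. My plan there would be to pass to a concise minimal representative of $T$, translate the question into one about degenerations of the span of the slices of $T$ inside a Grassmannian (using the equivalence between tensor degenerations and degenerations in Grassmannians mentioned in the introduction), and then construct an explicit one-parameter degeneration, falling back on an orbit classification in low dimension as the base case; for $k=3$ this last input can also be extracted from the known structure of $\GL^{\times 3}$-orbit closures of $3$-tensors. Everything else reduces to bookkeeping with the definition of $\aQ$.
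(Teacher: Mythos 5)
Your argument is correct and is essentially the paper's own proof: the paper likewise deduces \autoref{th:asysubrank} by applying \autoref{th:subrank-gap} and taking $N$-th roots in the definition of $\aQ$, and your supplementary sketch of the underlying dichotomy (rank-one flattening versus degeneration to $\w_k$, plus monotonicity of $\aQ$ under degeneration and $\aQ(\w_k)=c_k$) matches the route the paper takes in \autoref{th:gap-flattening}.
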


Similarly, \autoref{th:asympsubtrich} can be phrased as follows using the definition of asymptotic subrank:

\begin{theorem}\label{thm: asy rank 3 tensors}
For every nonzero $T \in \bbK^{n_1} \otimes \bbK^{n_2} \otimes \bbK^{n_3}$, exactly one of the following is true: 
\begin{enumerate}[\upshape(a)]
    \item $\aQ(T) = 1$;
    \item $\aQ(T) = c_3$ where $c_3 = 3 /2^{2/3} \approx 1.88$;
    \item $\aQ(T) \geq 2$.
\end{enumerate}
\end{theorem}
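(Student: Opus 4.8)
The plan is to deduce the statement directly from \autoref{th:asympsubtrich} by unwinding the definition of the asymptotic subrank, $\aQ(T)=\lim_{N\to\infty}\subrank(T^{\boxtimes N})^{1/N}=\sup_N\subrank(T^{\boxtimes N})^{1/N}$. In case (a) of \autoref{th:asympsubtrich} every term of this sequence equals $1$, so $\aQ(T)=1$. In case (b) we have $\subrank(T^{\boxtimes N})^{1/N}=c_3^{\,1-o(1)}$, whose limit is $c_3$, so $\aQ(T)=c_3$. In case (c) we have $\subrank(T^{\boxtimes N})^{1/N}\ge 2^{\,1-o(1)}$, whose limit is $2$, so $\aQ(T)\ge 2$ (over an algebraically closed field the sharper bound $\subrank(T^{\boxtimes N})\ge 2^{N}$ from \autoref{th:asympsubtrich} gives the same conclusion at once). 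Since $1<c_3<2$, the three possible values of $\aQ(T)$ are pairwise distinct, so exactly one of (a)--(c) holds, matching the ``exactly one'' in \autoref{th:asympsubtrich}. That is the entire argument: all the content of this theorem lies in \autoref{th:asympsubtrich}.

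It is therefore worth recording how \autoref{th:asympsubtrich} itself would be proved, since that is where the work sits. The split between (a) and (b)--(c) is precisely the case $k=3$ of the first gap, \autoref{th:subrank-gap} (equivalently \autoref{th:asysubrank}): if $\subrank(T^{\boxtimes N})>1$ for some $N$ then $\aQ(T)\ge c_3$. So one assumes $\aQ(T)\ge c_3$ and must separate $\aQ(T)=c_3$ from $\aQ(T)\ge 2$, i.e.\ show $\aQ(T)>c_3\Rightarrow\aQ(T)\ge 2$. The engine is the orbit-closure characterization underlying \autoref{th:subrank-gap}: $\aQ(T)>1$ is equivalent to $\w_3$ lying in the orbit closure of a suitable padding of $T$. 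On top of this one proves a \emph{second} structural dichotomy for $3$-tensors: either the orbit closure of (a padding of) $T$ already contains the unit tensor $\un_{3,2}$, in which case $\aQ(T)\ge\aQ(\un_{3,2})=2$ by monotonicity of the asymptotic subrank under degeneration (padding leaving $\aQ$ unchanged); or $T$ falls into a restricted ``W--type'' family, for which one proves the matching upper bound $\aQ(T)\le c_3$. The lower bound needed in case (b) is the known value $\aQ(\w_3)=c_3$ from \cite{strassen1991degeneration}; the upper bound comes from an asymptotic-restriction estimate relating $T$ to $\w_3$ at rate one (a suitable power of $T$ restricts, up to padding, to a power of $\w_3$) or, equivalently, from evaluating an appropriate normalized monotone on $T$.

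The hard part is this second dichotomy, and within it the upper bound $\aQ(T)\le c_3$ for the W--type family. It rests on a fine description of the orbit closures of $3$-tensors obtained through Kronecker's normal form for pencils of matrices together with degenerations of linear subspaces inside Grassmannians (the ``degenerations in Grassmannians'' advertised in the introduction), and the delicate point is making the upper bound uniform under Kronecker powers so that the error term is genuinely $o(N)$. The lower-bound side -- exhibiting $\w_3$, or $\un_{3,2}$, in the relevant orbit closure -- is by comparison routine.
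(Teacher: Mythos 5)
Your proof is correct and is exactly the paper's: the paper derives this theorem immediately from \autoref{th:asympsubtrich} by applying the definition $\aQ(T)=\lim_{N\to\infty}\subrank(T^{\boxtimes N})^{1/N}$, just as you do, with mutual exclusivity coming from $1<c_3<2$. Your supplementary sketch of how \autoref{th:asympsubtrich} itself is proved is not needed for this statement and slightly misplaces the difficulty (in the paper the case-(b) upper bound is immediate from $\w_3\geq T$ together with the known value $\aQ(\w_3)=c_3$, the real work being the restriction/degeneration trichotomy of \autoref{thm: subrank 2 for 3tensors} via the Cayley hyperdeterminant), but this does not affect the correctness of your argument for the theorem at hand.
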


\subsection{Degeneration to the W-tensor}\label{sec: intro degeneration}

We now discuss our approach to proving \autoref{th:subrank-gap}. 
The main ingredient is a structural result on tensors that is of independent interest. This structural result characterizes which tensors admit a very structured tensor, the W-tensor, as a degeneration, in the sense explained below. The characterization that we will present here is in terms of a simple criterion based on the flattening ranks of the tensor.

We briefly introduce the necessary notions. In this part we let $\bbK$ be an algebraically closed field. For any dimensions $n_1, \ldots, n_k \geq 2$, let $\w_k \in \bbK^{n_1} \ootimes \bbK^{n_k}$ be the tensor defined as follows. Let $e_1, e_2, \ldots$ be the standard basis vectors in $\bbK^{n_i}$ and define $\w_k = \sum_s e_{s_1} \ootimes e_{s_k}$ where the sum is over all $k$-tuples $s \in \{1,2\}^k$ that are permutations of $(2, 1, \ldots, 1)$. In other words, $\w_k$ is the tensor with coefficients in $\{0,1\}$ and support given by the $k$-tuples $(2, 1, \ldots, 1), (1, 2, \ldots, 1), \ldots, (1, 1, \ldots, 2)$. For example,
\begin{align*}
\w_2 &= e_2 \otimes e_1 + e_1 \otimes e_2\\
\w_3 &= e_2 \otimes e_1 \otimes e_1 + e_1 \otimes e_2 \otimes e_1 + e_1 \otimes e_1 \otimes e_2\\
\w_4 &= e_2 \otimes e_1 \otimes e_1 \otimes e_1 + e_1 \otimes e_2 \otimes e_1 \otimes e_1 + e_1 \otimes e_1 \otimes e_2 \otimes e_1 + e_1 \otimes e_1 \otimes e_1 \otimes e_2. 
\end{align*}

The notion of degeneration is the approximate version of restriction. Intuitively $T$ degenerates to $S$ if there are arbitrary small perturbations of $S$ to which $T$ restricts. Precisely, given $T,S \in \bbK^{n_1} \ootimes \bbK^{n_k}$, $T$ degenerates to $S$, denoted $T \degengeq S$, if $S$ is in the Zariski closure of the orbit of $T$, that is $S \in \bar{\{ (g_1, \ldots, g_k) T : g_i \in \GL_{n_i}\}}$. When $\bbK = \bbC$, then the closure can be equivalently taken in the Euclidean topology. More generally, one can define an equivalent notion of degeneration which mimics the behavior of limits in the Euclidean topology; we refer to \autoref{sec:degen-grass} for further explanations. Similarly to restriction, degeneration is an ordering on tensors; it is ``weaker'' than restriction, in the sense that, if $T \geq S$, then $T \degengeq S$, and there are examples for which the reverse implication does not hold. 

Two natural questions are the following: for which tensors $T \in \bbK^{n_1} \ootimes \bbK^{n_k}$ is there a restriction $T \geq \w_k$ and for which is there a degeneration $T \degengeq \w_k$? We solve the second question by providing a sufficient and necessary condition. This condition is in terms of so-called flattening ranks of the tensor, which we briefly mentioned before. Given a tensor $T$, let $T_I \in (\bigotimes_{i \in I} \bbK^{n_i}) \otimes (\bigotimes_{i \not\in I} \bbK^{n_i})$ denote the $I$-flattening of $T$. It is not hard to see that all flattening ranks of $\w_k$ are $2$. This fact, together with semicontinuity of matrix rank, implies that if $T \degengeq \w_k$, then all flattening ranks of~$T$ are at least $2$. We prove that this necessary condition is also sufficient:

\begin{restatable}{theorem}{thmdegentoW}\label{thm: degeneration to w}
For every $T \in \bbK^{n_1} \ootimes \bbK^{n_k}$ exactly one of the following is true:
\begin{enumerate}[\upshape(a)]
    \item $T$ has a flattening of rank one;
    \item $T \degengeq \w_k$. 
\end{enumerate}
\end{restatable}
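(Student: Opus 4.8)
Exclusivity of the two cases is immediate and was already observed above (every flattening of $\w_k$ has rank $2$, and flattening rank is non‑increasing under degeneration). So the content is the implication: \emph{if no flattening of $T$ has rank one, then $T\degengeq\w_k$}. My plan is to reduce to the minimal format, recast the problem as the existence of a suitable point of $(\bbP^1)^k$, and then exhibit the degeneration by an explicit one‑parameter monomial limit.

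\emph{Reduction to $(\bbK^2)^{\ootimes k}$.} For generic surjections $P_i\colon\bbK^{n_i}\surjects\bbK^2$ and every bipartition $I$, the flattening rank of $(P_1\ootimes P_k)T$ equals $\min\!\bigl(2^{|I|},2^{k-|I|},\rk T_I\bigr)\ge 2$, using $1\le|I|\le k-1$ and $\rk T_I\ge 2$. As a restriction is a degeneration, degeneration is transitive, and $\w_k$ in the $2\times\cdots\times2$ format sits inside larger formats as the standard $\w_k$, it is enough to treat $T\in(\bbK^2)^{\ootimes k}$.

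\emph{Reduction to a point of $(\bbP^1)^k$.} I claim it suffices to find bases $\{f_0^{(i)},f_1^{(i)}\}$ of the factors such that, when $T$ is written in the induced basis, the coefficient of $f_0^{(1)}\ootimes f_0^{(k)}$ vanishes while every ``weight‑one'' coefficient, of $f_0^{(1)}\ootimes f_1^{(\ell)}\ootimes f_0^{(k)}$, is nonzero. Given such bases, acting on the $i$‑th factor by $\diag(1,s)$ in $\{f_0^{(i)},f_1^{(i)}\}$ scales a monomial of Hamming weight $w$ by $s^w$; multiplying by $s^{-1}$ (absorbed into one of the $\GL_2$'s) makes the weight‑$w$ part scale by $s^{w-1}$, so as $s\to 0$ the weight‑$0$ part disappears (its coefficient is $0$, hence no blow‑up), the weight‑$\ge2$ part disappears, and the weight‑one part survives; rescaling each $f_1^{(i)}$ and reverting to the standard basis, the limit is exactly $\w_k$, giving $T\degengeq\w_k$. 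Writing $\phi_i\in(\bbK^2)^*$ for the vector dual to $f_0^{(i)}$, $F(\phi_1\vvirg\phi_k)=\langle\phi_1\ootimes\phi_k,T\rangle$, and $w_\ell(\phi)\in\bbK^2$ for the contraction of $T$ against $\bigotimes_{i\ne\ell}\phi_i$, the vanishing of the all‑$f_0$ coefficient is exactly $F(\phi)=0$, and the $\ell$‑th weight‑one coefficient can be made nonzero (by a suitable choice of $f_1^{(\ell)}$) exactly when $w_\ell(\phi)\ne 0$. So the task becomes: \emph{find $([\phi_1]\vvirg[\phi_k])\in(\bbP^1)^k$ with $F(\phi)=0$ and $w_\ell(\phi)\ne 0$ for every $\ell$} — equivalently a degeneration statement for the $2$‑plane $\im(T)\subseteq(\bbK^2)^{\ootimes(k-1)}$ inside a Grassmannian.

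\emph{The crux: finding the point.} If $T$ has a rank‑one flattening, $T=a\otimes b$, then $F$ factors as a product of two multilinear forms in complementary blocks of variables; at any zero one factor vanishes, killing all $w_\ell$ in that block, so no good point exists — recovering exclusivity. For the converse, consider the rational self‑map $\Psi\colon(\bbP^1)^k\dashto(\bbP^1)^k$, $\Psi(\phi)_\ell=[w_\ell(\phi)]$; each $\Psi_\ell$ is independent of the $\ell$‑th coordinate and given by forms of multidegree $(1,\dots,1)$ in the remaining $k-1$ coordinates, and a fixed point of $\Psi$ in its domain of definition is exactly a good point. The no‑rank‑one‑flattening hypothesis is precisely what makes the indeterminacy locus have codimension $\ge 2$: if the two coordinate forms of $w_\ell$ had a common factor, unique factorization would make the two $\ell$‑slices of $T$ share a common tensor factor over a fixed block of indices — i.e.\ $T$ would have a rank‑one flattening. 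On cohomology $\Psi^*h_\ell=\sum_{i\ne\ell}h_i$, so the trace of $\Psi^*$ on $H^{2j}$ is $\binom{k}{j}D_j$ (with $D_j$ the number of derangements of $j$ elements) and the Lefschetz number is $\sum_j\binom{k}{j}D_j=k!\ne 0$; the Lefschetz fixed‑point theorem applied to the closure of the graph (valid over any algebraically closed field) produces a point of $\overline{\Gamma_\Psi}\cap\Delta$, which the codimension‑$\ge2$ indeterminacy lets one promote to an honest fixed point with all $w_\ell\ne 0$. (For $k=2$ this is just that a Möbius transformation has a fixed point; alternatively one can induct on $k$, using that for generic $\phi_k$ the contraction of $T$ against $\phi_k$ in the last factor still has no rank‑one flattening — same Segre‑cone/common‑factor argument — then solving the order‑$(k-1)$ problem and closing up the final coordinate.) The step I expect to be the main obstacle is exactly this one: turning the mutual dependence of the $k$ coordinates into a genuine common fixed point and handling the degenerate loci, i.e.\ making the Lefschetz count rigorous past the indeterminacy of $\Psi$.
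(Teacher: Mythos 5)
Your reformulation is the right kind of idea and is genuinely different from the paper's route: the equivalence between finding $([\phi_1]\vvirg[\phi_k])$ with $F(\phi)=0$, $w_\ell(\phi)\neq0$ for all $\ell$, and obtaining $T\degengeq\w_k$ by a monomial one--parameter degeneration is correct, and so is your observation that a common factor of the coordinate forms of $w_\ell$ would force a rank--one flattening. (The paper instead inducts on $k$, converting the problem into degenerations of $2$--planes in a Grassmannian, \autoref{thm: degenerations via grassmannians}, and then using the stabilizer of $\w_{k-1}$, \autoref{prop: degen with Stabw}.) However, two steps are not proofs as written. The preliminary claim that generic surjections give $\rk\bigl((P_1\ootimes P_k)T\bigr)_I=\min\bigl(2^{|I|},2^{k-|I|},\rk T_I\bigr)$ is only asserted: the compressions $\bigotimes_{i\in I}P_i$ are tensor--structured, not generic, and the statement you actually need (all flattening ranks stay $\geq2$) is exactly \autoref{prop: good flattenings under restriction}, which the paper has to prove via \autoref{lemma: characterization good flattening ranks}. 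The serious gap is the Lefschetz step, and it is the one you flag yourself. First, your trace computation assumes the correspondence $\overline{\Gamma_\Psi}$ acts on $H^{2j}$ multiplicatively, i.e.\ that the multidegrees of $\Psi$ are the naive products $\prod_{\ell\in S}\bigl(\sum_{i\neq\ell}h_i\bigr)$; for a rational map with nonempty base locus, $(\mathrm{pr}_1)_*\mathrm{pr}_2^*$ need not respect cup products and multidegrees can drop, so the value $k!$ is unjustified beyond $H^0$ and $H^2$. Second, even granting $[\overline{\Gamma_\Psi}]\cdot[\Delta]\neq0$, one only gets a point of $\overline{\Gamma_\Psi}\cap\Delta$, which may lie over the indeterminacy locus (where some $w_\ell=0$); codimension $\geq2$ of that locus does not by itself promote such a point to an honest fixed point in the domain of definition, and you give no argument. (There is also a typing slip: $w_\ell(\phi)\in\bbK^2$ while $\phi_\ell\in(\bbK^2)^*$, so $\Psi$ is not literally a self--map; one needs the duality twist $[w_\ell]\mapsto[w_\ell^{\perp}]$, under which fixed points in the domain are indeed your good points.) The inductive alternative you sketch stalls at exactly the same place, ``closing up the final coordinate''.

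The frustrating part is that within your own framework no fixed--point theorem is needed, because the $k$ conditions $\phi_\ell(w_\ell(\phi))=0$ are all the \emph{same} single equation $F(\phi)=0$. Since $T\neq0$, the multilinear form $F$ is not identically zero (product functionals span the dual space), so $V(F)\subseteq\prod_i\bbP(V_i^*)$ is a nonempty hypersurface all of whose components have codimension one, whereas your common--factor argument shows that each locus $\{w_\ell=0\}$ has codimension at least two when $T$ has no flattening of rank $\leq1$. Hence $V(F)\not\subseteq\bigcup_\ell\{w_\ell=0\}$, and any point of the difference is a good point; your monomial degeneration (with weights $0,1,2,2,\dots$ on a completed basis) then finishes the proof, and this works directly in the original format, so the contested reduction to $\bbK^2\ootimes\bbK^2$ is not even needed. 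As submitted, though, the crux rests on an unproved Lefschetz count for a rational map plus an unproved promotion past the indeterminacy locus, so the proposal has a genuine gap precisely where you anticipated it.
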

For tensors of order three, \autoref{thm: degeneration to w} can be obtained via a reduction to the case $n_1 = n_2 = n_3 = 2$ and the classical characterization of the orbits in $\bbK^2 \otimes \bbK^2 \otimes \bbK^2$, dating back essentially to \cite{Sylv:PrinciplesCalculusForms}. In fact, for $k=3$ we prove a more precise classification with three cases. This classification not only involves the W-tensor $\w_3 = e_2 \otimes e_1 \otimes e_1 + e_1 \otimes e_2 \otimes e_1 + e_1 \otimes e_1 \otimes e_2$ but also the unit tensor $\un_{3,2} = e_1 \otimes e_1 \otimes e_1 + e_2 \otimes e_2 \otimes e_2$ of order three and rank two. We prove the following.
\begin{restatable}{theorem}{thmorderthreecomplex}\label{thm: subrank 2 for 3tensors}
For every nonzero $T \in \bbK^{n_1} \otimes \bbK^{n_2} \otimes \bbK^{n_3}$ exactly one of the following is true:
\begin{enumerate}[\upshape(a)]
    \item $T$ has a flattening of rank one;
    \item $\w_3 \geq T$ and $T \geq \w_3$;
    \item $T \geq \un_{3,2}$.
\end{enumerate}
\end{restatable}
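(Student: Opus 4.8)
The plan is to reduce to the case $n_1=n_2=n_3=2$, where over the algebraically closed field $\bbK$ the $\GL_2^{\times3}$-orbit structure of $\bbK^2\otimes\bbK^2\otimes\bbK^2$ is classically known (going back to \cite{Sylv:PrinciplesCalculusForms}): its only two orbits of concise tensors (those with all three flattening ranks equal to $2$) are the orbits of $\w_3$ and of $\un_{3,2}$. I first record that the three alternatives are mutually exclusive. All flattening ranks of $\w_3$ and of $\un_{3,2}$ equal $2$, and flattening ranks never increase under restriction, so a tensor with a flattening of rank $1$ cannot restrict to either, and (a) excludes (b) and (c); the same observation shows $\subrank(\w_3)=1$, since a restriction $\w_3\geq\un_{3,2}$ would be by invertible maps (as $\un_{3,2}$ is concise), placing $\w_3$ and $\un_{3,2}$ in one orbit, contrary to the classification. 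Finally, if (b) and (c) both held for some $T$ then $\w_3\geq\un_{3,2}$ by transitivity, contradicting $\subrank(\w_3)=1$; so (b) excludes (c). It thus suffices to show that if $T$ has no flattening of rank $1$, then (b) or (c) holds.

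So assume the flattening ranks $r_1,r_2,r_3$ are all $\geq2$. Replacing $T$ by its concise core $\hat T\in\bbK^{r_1}\otimes\bbK^{r_2}\otimes\bbK^{r_3}$ (the restriction of $T$ to its three support subspaces), one has $T\geq\hat T$ and $\hat T\geq T$, so $T$ and $\hat T$ are restriction-equivalent and, by transitivity, alternative (b) or (c) for $\hat T$ transfers to $T$; we may thus assume $T$ is itself concise with all $n_i\geq2$. We must show: either $T\geq\un_{3,2}$, or $(n_1,n_2,n_3)=(2,2,2)$ and $T$ lies in the orbit of $\w_3$. If $(n_1,n_2,n_3)=(2,2,2)$, this is exactly the classification above. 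If some $n_i\geq3$, I claim $T\geq\un_{3,2}$, and argue by descending induction on $n_1+n_2+n_3$ using the following reduction lemma: if $n_1=\max_i n_i\geq3$, then a generic corank-one map $\pi_1\colon\bbK^{n_1}\to\bbK^{n_1-1}$ keeps $(\pi_1\otimes\id\otimes\id)T$ concise. Indeed its first flattening rank equals exactly $n_1-1\geq2$; and for $j\in\{2,3\}$ the span $W_j$ of the direction-$j$ slices of $T$ is not contained in the variety of rank-$\leq1$ matrices — a linear subspace of that variety has ``fixed column space'' or ``fixed row space'', either of which would force a flattening of $T$ of rank $1$ — so the rank-$\leq1$ locus of $W_j$ has dimension $<n_1-1$, and the lines of $\bbK^{n_1}$ occurring as column spaces of its elements form a proper closed subset of $\bbP^{n_1-1}$ that a generic $\ker\pi_1$ avoids, preserving the other two flattening ranks. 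Iterating, $T$ restricts to a concise tensor whose dimensions are a permutation of $(3,2,2)$ (the only multiset of integers $\geq2$ with sum $7$), where the descent stops.

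It remains to prove that a concise tensor $T$ of shape $(3,2,2)$ satisfies $T\geq\un_{3,2}$. Its first flattening identifies the first factor with a $3$-dimensional space $\calA$ of $2\times2$ matrices, hence a hyperplane in $\bbK^2\otimes\bbK^2$. Under $\GL_2\times\GL_2$ acting on the last two factors these hyperplanes form exactly two orbits, represented by $\fraksl_2$ and by $\{M:M_{11}=0\}$: a hyperplane is the kernel of $M\mapsto\langle C,M\rangle$ for a nonzero matrix $C$ unique up to scalar, $\GL_2\times\GL_2$ acts on $C$ by matrix equivalence, so the orbit is determined by $\rk C\in\{1,2\}$. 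Both representatives contain the plane $\calB=\langle e_1\otimes e_2,\;e_2\otimes e_1\rangle$, so after the equivalence $\calA$ contains a plane equivalent to $\calB$; restricting the first factor of $T$ onto it yields the $2\times2\times2$ tensor $e_1\otimes e_1\otimes e_2+e_2\otimes e_2\otimes e_1$, which becomes $\un_{3,2}$ after swapping the two basis vectors of the third factor. Hence $T\geq\un_{3,2}$, alternative (c) holds, and the proof is complete.

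I expect the main obstacle to be the reduction lemma together with its endpoint: rigorously checking that corank-one restrictions preserve conciseness — which rests on the classification of linear subspaces of the Segre variety of rank-one matrices — and analysing the concise $(3,2,2)$-tensors to show each genuinely restricts onto $\un_{3,2}$ rather than only onto $\w_3$. Everything else is either formal (mutual exclusivity, passage to the concise core) or a citation (the orbit classification of $\bbK^2\otimes\bbK^2\otimes\bbK^2$).
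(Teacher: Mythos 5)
Your proposal is correct, but it follows a genuinely different route from the paper's. The paper proves the theorem by restricting $T$ generically to $\bbK^2\otimes\bbK^2\otimes\bbK^2$ (using \autoref{prop: good flattenings under restriction}) and invoking the orbit classification (\autoref{lem:classification}); in the case where the generic restriction is $\w_3$, it deduces that the Cayley hyperdeterminant vanishes on all $2\times 2\times 2$ restrictions and then uses \autoref{lem:M} and \autoref{lemma: set theoretic cayley module} --- an inheritance statement plus an explicit elimination computation with the polynomial $\Cay$ --- to place $T$ in the tangential variety $\tau(n_1,n_2,n_3)$, i.e.\ in the orbit closure of $\w_3$, which yields case (b). You instead never leave the world of restrictions: you pass to the concise core, run a descending induction in which a generic corank-one compression of the largest factor preserves conciseness (justified via the rank-one-subspace lemma, essentially \autoref{lemma: rank one subspaces}, plus a dimension count on the column-space map of the rank-$\leq 1$ locus of a slice span), stop at format $3\times 2\times 2$, and show by the pencil argument (hyperplanes of $2\times2$ matrices form two $\GL_2\times\GL_2$-orbits, both containing a plane equivalent to $\langle e_1\otimes e_2, e_2\otimes e_1\rangle$) that every concise $3\times2\times2$ tensor restricts to $\un_{3,2}$; only the genuinely $2\times2\times2$ concise case can be $\w_3$, which gives case (b) directly as restriction-equivalence of the core with $\w_3$. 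Your route is more elementary and self-contained --- it avoids the hyperdeterminant computation and the tangential-variety lemmas, and it produces case (b) as an outright isomorphism of the concise core rather than via orbit-closure membership --- while the paper's route buys explicit defining equations (the ideal $\calJ$ together with the $3\times 3$ flattening minors) and the identification of $\calV(\calJ)$, which are of independent interest. Two small imprecisions in your write-up, both easily repaired: the quantity that must have dimension less than $n_1-1$ is the (closed) image in $\bbP^{n_1-1}$ of the column-space map on the projectivized rank-$\leq1$ locus of $W_j$ (bounded by $n_j-2\leq n_1-2$), not the affine locus itself; and the $3\times2\times2$ analysis should be stated up to permuting the factors, which is harmless since $\un_{3,2}$ and the construction are symmetric.
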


Condition (b) in \autoref{thm: subrank 2 for 3tensors} says that there are linear maps $\pi_i, \sigma_i : \bbK^{n_i} \to \bbK^{n_i}$ such that $(\pi_1 \otimes \pi_2 \otimes \pi_3) T = \w_3$ and $(\sigma_1 \otimes \sigma_2 \otimes \sigma_3) \w_3 = T$. This condition is equivalent to the statement that $T \in \bbK^{n_1} \otimes \bbK^{n_2} \otimes \bbK^{n_3}$ is isomorphic to $\w_3 \in \bbK^{n_1} \otimes \bbK^{n_2} \otimes \bbK^{n_3}$, that is, there are invertible linear maps $\pi_i : \bbK^{n_i} \to \bbK^{n_i}$ such that $(\pi_1 \otimes \pi_2 \otimes \pi_3) T = \w_3$.

As a sanity check, notice that \autoref{thm: subrank 2 for 3tensors} implies \autoref{thm: degeneration to w} for tensors of order three. Indeed, it is known that $\un_{3,2} \degengeq \w_3$, that is the border rank of $\w_3$ is $2$, so if $T$ does not satisfy condition (a) of \autoref{thm: subrank 2 for 3tensors} then $T \degengeq \w_3$.

\subsection{Related work}

\paragraph*{Gaps for slice rank.} 
\autoref{th:subrank-gap} extends a recent result of Costa and Dalai~\cite{DBLP:journals/jcta/CostaD21}, which says that for any $k$-tensor~$T$, either the asymptotic slice rank of $T$ is at most $1$ or it is at least $c_k = 2^{h(1/k)}$ (the same constant as in our~\autoref{th:subrank-gap}). It is pointed out in~\cite{DBLP:journals/jcta/CostaD21} how this gap provides a barrier for the slice rank to give good upper bounds on certain combinatorial problems (which we discuss more in \autoref{subsec:app}). Their proof relies on combinatorial methods to study the slice rank of powers of a tensor that were introduced by Tao and Sawin~\cite{sawin}. 

Since slice rank is at least subrank, the result of \cite{DBLP:journals/jcta/CostaD21} follows from \autoref{th:asysubrank}. We moreover obtain the same gap for any ``normalized monotone'' (\autoref{rem:norm-mon}) for instance partition rank, analytic rank, geometric rank and G-stable rank. Whether the asymptotic subrank can be strictly smaller than the asymptotic slice rank remains an open problem. It is known, however, that the subrank is significantly smaller than slice rank for generic (i.e.~``most'') tensors~\cite{derksen_et_al:LIPIcs.CCC.2022.9}.

\vspace{-.4cm}\paragraph*{Previous bounds for asymptotic subrank.} \autoref{th:subrank-gap} improves on results of Strassen~\cite{strassen1988asymptotic}. Namely, \cite[Lemma~3.7]{strassen1988asymptotic} says that if $T$ is a tensor having no flattening of rank one, then $\aQ(T) \geq 2^{2/k}$. (The proof is given in the case of $3$-tensors but it generalizes directly to tensors of any order.) Since $c_k = 2^{h(1/k)} > 2/k$ for every~$k \geq 3$, \autoref{th:subrank-gap} improves on the bound of \cite{strassen1988asymptotic}. 
    
Tensors satisfying the property of being ``balanced'' are known to satisfy a stronger lower bound on the asymptotic subrank \cite[Proposition~3.6]{strassen1988asymptotic}. In particular, since generic tensors are balanced, this result guarantees that if $T$ is a generic $k$-tensor in $(\bbK^n)^{\otimes k}$, then $\aQ(T) \geq n^{2/k}$. 

\vspace{-.4cm}\paragraph*{Values of normalized monotones over finite fields are well-ordered.} 
In the recent \cite[Corollary~1.4.3]{BlaDraRup:TensorRestrictionFiniteFields}, the authors prove that for any real-valued normalized monotone $f$ on $k$-tensors over a finite field $\bbK$, the image  of $f$ is a well-ordered set, that is, any subset of the image of $f$ has a smallest element. Equivalently, every strictly decreasing sequence of elements of the image of $f$ terminates after finitely many steps. This guarantees that for any normalized monotone $f$, for every $r > 0$ there is a $\delta > 0$ such that no tensor $T$ satisfies $r < f(T) < r+\delta$. In  \autoref{th:asysubrank}, we explicitly determine the largest such $\delta$ for $r = 1$ to be $\delta = c_k - 1$. In \autoref{thm: asy rank 3 tensors}, we determine the largest such $\delta$ for $r = c_3$ to be $\delta = 2 - c_3$.

\vspace{-.4cm}\paragraph*{Homomorphism duality.} \autoref{thm: degeneration to w} and \autoref{thm: subrank 2 for 3tensors} say that \emph{non-existence} of certain degenerations is equivalent to \emph{existence} of other degenerations. More precisely, \autoref{thm: degeneration to w} states that for any $k$-tensor $T$, there is no degeneration $S \degengeq T$ for any $k$-tensor $S$ having a flattening of rank one if and only if there is a degeneration $T \degengeq \w_k$. This property is related to the notion of ``forbidden restrictions'' studied in \cite{BlaDraRup:TensorRestrictionFiniteFields}. This phenomenon is generally known as ``homomorphism duality'' and was introduced in graph theory to study ``gaps'' in the homomorphism ordering~\cite[Section 1.4]{DBLP:books/daglib/0013017}. An example of a homomorphism duality in graph theory is the following: given any graph $G$, there is no homomorphism $G \to K_1$ if and only if there is a homomorphism $K_2 \to G$, where $K_n$ denotes the complete graph on $n$ vertices. Another example is the theorem of K\"onig stating that there is no homomorphism $G \to K_2$ if and only if there is a homomorphism $C_\ell \to G$ for some odd integer $\ell \geq 3$, where~$C_\ell$ denotes the cycle graph on $\ell$ vertices.

\subsection{Applications of our results}\label{subsec:app}

\paragraph*{Boosts and barriers for tensor methods in discrete mathematics.}
The breakthrough results of Croot, Lev and Pach \cite{MR3583357} and Ellenberg and Gijswijt \cite{MR3583358} on the cap set problem lead to much interest in the use of tensor methods to solve problems in combinatorics. Two important tensor parameters in this context are the following: the slice rank \cite{tao, sawin}, introduced to give a simplified solution to the cap set problem, and used to study the sunflower problem \cite{MR3668469}, and the group-theoretic approach to fast matrix multiplication \cite{MR3631613}); the partition rank \cite{DBLP:journals/jcta/Naslund20a}, introduced to solve a combinatorial problem on corners. In a typical application of these tensor methods to bound the size of a combinatorial object, one designs a fixed small tensor $T$ that ``encodes'' the object of study in such a way that bounding from above the slice rank or the partition rank of tensor powers of $T$ gives an upper bound on the combinatorial problem of interest. Our gap results say that the set of values that these upper bounds can have has gaps. Depending on the application such gaps can either pose a barrier for tensor methods to give interesting bounds, or ``boost'' a bound obtained by such a method to the next possible value that the tensor method can take, and thus improve the bound on the combinatorial problem.

\vspace{-.4cm}\paragraph*{Discreteness of quantum entanglement distillation rates.}
In the context of quantum information theory, \autoref{thm: degeneration to w} and \autoref{thm: subrank 2 for 3tensors} can be interpreted as follows. Whenever a quantum state $\Psi$ is genuinely multiparty entangled (which is equivalent to having non-trivial partition rank), then \autoref{thm: degeneration to w} guarantees that it is possible to distill a W-state to arbitrary accuracy by stochastic local operations and classical communication (SLOCC). Furthermore, a rate of $h(1/k)$ Greenberger–Horne–Zeilinger states can be extracted via SLOCC from many copies of $\Psi$. This is a remarkable statement on the possibility of entanglement distillation, especially since the property of genuinely multiparty entanglement can be tested efficiently with the product test~\cite{harrow2013testing}. Furthermore, in the context of certifying genuine multiparty entanglement with help of entanglement polytopes, as discussed in~\cite{WalDorGroChr:EntanglementPolytopes}, we point out that our results imply that any non-trivial entanglement polytope necessarily contains the entanglement polytope of the W-state, a result that should be seen in the light of the quantitative results for qubits in \cite[Suppl.~Material, Lemma~5]{WalDorGroChr:EntanglementPolytopes}. 

\vspace{-.4cm}\paragraph*{Comon-type separations of degeneration and symmetric degeneration.}
In the geometric setting, one often considers the subspace of symmetric tensors $S^k \bbK^n$ of $(\bbK^n)^{\otimes k}$. In this case, one considers \emph{symmetric} restrictions and degenerations, where the underlying group action is the one of the diagonal $\GL_n \subseteq \GL_n^{\times n}$ acting simultaneously on all factors. A long-standing problem posed by Comon \cite[Problem 15]{oedingaim} asked whether the tensor rank of a symmetric tensor coincides with its symmetric tensor rank; in other words, the problem asks whether the existence of a restriction from a unit tensor to $T$ implies the existence of a symmetric restriction. An example where this is indeed not possible was provided in \cite{Shitov:CounterexampleComon}. The analogous problem for subrank was posed in \cite{ChrFawTaZui:SymmetricSubrank} and answered in \cite{Shitov:SubrankVsSymSubrank}. For degenerations, \cite{Chang:MaximalBorderSubrank} provides ways to construct tensors $T \in S^k \bbK^n$ admitting a degeneration to $\un_{k,n}$, whereas it is easy to see that this cannot be achieved via symmetric degeneration. The degeneration analog of Comon's original question on tensor rank instead remains open. However, in \cite{ChrFawTaZui:SymmetricSubrank}, it is shown that in the asymptotic setting the notions of restriction, degeneration and their symmetric versions are all equivalent. \autoref{thm: degeneration to w} provides further examples in the degeneration setting. For example, let $\lambda = (\lambda_1 \vvirg \lambda_s)$ be a partition of $k$ and let $D_\lambda$ be the corresponding Dicke state; under the natural identification between symmetric tensors and homogeneous polynomials, we have $D_\lambda = x_1^{\lambda_1} \cdots x_s^{\lambda_s}$; under this identification $\w_k = x_1^{k-1}x_2$. In particular, if $\lambda_j \neq 1$ for every $j$, then there is no symmetric degeneration from $D_\lambda$ to $\w_k$, because $D_\lambda$ has no factors of multiplicity one; on the other hand \autoref{thm: degeneration to w} guarantees that $D_\lambda \degengeq \w_k$.

\section{Restriction, unit tensor, subrank, flattening, partition rank}\label{sec:basic}

In this section we discuss basic tensor concepts that we will need throughout the paper.

Let $\bbK$ be an algebraically closed field. Let $V_1 \vvirg V_k, W_1 \vvirg W_k$ be finite-dimensional vector spaces over $\bbK$. 
\begin{definition}
Let $T \in V_1 \otimes \cdots \otimes V_k$ and $S \in W_1 \otimes \cdots \otimes W_k$.  We say that $T$ \emph{restricts to} $S$, and write $T \geq S$, if there are linear maps $\pi_i : V_i \to W_i$ such that $(\pi_1 \otimes \cdots \otimes \pi_k) T = S$. 
\end{definition}

\begin{definition}
For $k,r \in \bbN$  the unit tensor of order $k$ and rank $r$ is $\un_{k,r} \in \bbK^{r} \ootimes \bbK^{r}$ defined by
\[
\un_{k,r} = \sum_{i=1}^r e_{i} \ootimes e_i.
\]
\end{definition}

\begin{definition}
Let $T \in V_1 \ootimes V_k$.
The \emph{subrank} of $T$, denoted by $\subrank(T)$, is the largest number $r$ such that $T \geq \un_{k,r}$.
\end{definition}

\begin{definition}
Let $T \in V_1 \ootimes V_k$.
Every subset $I \subseteq [k]$ defines a linear map $T_I : \textbigotimes_{i \in I} V_i^* \to \textbigotimes_{j \notin I} V_j$
via tensor contraction. We call $T_I$ the \emph{$I$-flattening} of $T$. For any $p \in [k]$ we will use the notation $T_p \coloneqq T_{\{p\}}$. We call $T_p : V_p^* \to \bigotimes_{j\neq p} V_j$ the \emph{$p$-flattening} of $T$.
\end{definition}

\begin{definition}
Let $T \in V_1 \ootimes V_k$. We say that $T$ has \emph{partition rank one} if there exists a subset $I \subseteq [k]$ with $I \neq \emptyset$ and $I \neq [k]$, such that $\rk(T_I) = 1$. The \emph{partition rank} of $T$ is the smallest number $r$ such that we can write $T = S_1 + \cdots + S_r$ for tensors $S_i$ that each have partition rank 1.
\end{definition}

This paper is concerned with the structural difference between tensors having partition rank one and tensors having partition rank strictly larger thank one. In particular, in \autoref{sec:prank-restr}, we will provide a characterization of this property. A basic property of the partition rank is the following.
\begin{lemma}[Naslund \cite{DBLP:journals/jcta/Naslund20a}]\label{lem:pr-norm}
For every $r \in \bbN$, we have $\pR(\un_{k,r}) = r$.
\end{lemma}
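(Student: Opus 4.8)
First I would dispose of the easy inequality $\pR(\un_{k,r}) \le r$: in $\un_{k,r} = \sum_{i=1}^r (e_i \ootimes e_i)$ every summand $e_i \ootimes e_i$ satisfies $\rk\big((e_i \ootimes e_i)_I\big) = 1$ for all $I$ and hence has partition rank one. The content is the lower bound $\pR(\un_{k,r}) \ge r$, which I would prove by induction on $k$. The base case $k = 2$ is the identity $\pR(\un_{2,r}) = \rk(\un_{2,r}) = r$, since for $2$-tensors a partition-rank-one tensor is exactly a rank-one matrix and so $\pR$ agrees with matrix rank.

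For the inductive step ($k \ge 3$) I would start from an optimal decomposition $\un_{k,r} = \sum_{j=1}^d S_j$ with $d = \pR(\un_{k,r})$ and every $\pR(S_j) = 1$, and argue that $d \ge r$, assuming $d < r$ towards a contradiction. Fix any mode $p \in [k]$. Call $S_j$ a \emph{mode-$p$ slice} if it admits no rank-one flattening $\rk\big((S_j)_I\big) = 1$ with $p$ lying in a part of size $\ge 2$; such an $S_j$ is then necessarily of the form $v_j \otimes W_j$ with $v_j \in V_p$ and $W_j \in \bigotimes_{t \ne p} V_t$. Let $X$ be the sum of the mode-$p$ slices, $s$ their number, and $Y$ the sum of the remaining $d - s$ terms, so $\un_{k,r} = X + Y$. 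The plan is to contract the $p$-th mode against a functional $w \in V_p^*$ chosen inside $\ker\big(X_{\{p\}}\big)$, where $X_{\{p\}}\colon V_p^* \to \bigotimes_{t \ne p} V_t$ is the $p$-th flattening of $X$: then $\un_{k,r} \cdot_p w = Y \cdot_p w$ is a sum of $d - s$ tensors, each still of partition rank $\le 1$, because a rank-one flattening of a surviving $S_j$ with $p$ on its $\ge 2$-side stays a rank-one flattening after deleting mode $p$. Now $X_{\{p\}}$ is a sum of $s$ linear maps of rank $\le 1$, so $\dim \ker\big(X_{\{p\}}\big) \ge r - s \ge 1$; let $L = \{\ell \in [r] : \ker\big(X_{\{p\}}\big) \subseteq \{w_\ell = 0\}\}$, and note $|L| \le r - \dim\ker\big(X_{\{p\}}\big) = \operatorname{rank}\big(X_{\{p\}}\big) \le s$. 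Since $\bbK$ is infinite, I can choose $w \in \ker\big(X_{\{p\}}\big)$ with $w_\ell \ne 0$ for every $\ell \notin L$; for such $w$ one has $\un_{k,r}\cdot_p w = \sum_{\ell \notin L} w_\ell \bigotimes_{t\ne p} e_\ell$, which is isomorphic to $\un_{k-1,\,r-|L|}$. The induction hypothesis then gives $r - |L| \le \pR\big(\un_{k-1,\,r-|L|}\big) \le d - s$, and combined with $|L| \le s$ this yields $r \le d$, contradicting $d < r$.

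I expect the main obstacle to be exactly the mode-$p$ slices: contracting mode $p$ sends such a term $v_j \otimes W_j$ to the essentially unconstrained $(k-1)$-tensor $\langle v_j, w\rangle W_j$, so one cannot kill them one mode at a time as in the slice-rank argument. The device above disposes of all of them simultaneously by forcing $w$ into $\ker\big(X_{\{p\}}\big)$, at the unavoidable price of also forcing the coordinates indexed by $L$ to vanish; the rank bound $\operatorname{rank}\big(X_{\{p\}}\big) \le s$, equivalently $|L| \le s$, is precisely what keeps this loss harmless so the induction can close on $\un_{k-1,\,r-|L|}$ rather than on $\un_{k-1,r}$. The remaining ingredients are routine and I would only verify them briefly: that contracting a partition-rank-one tensor in one mode yields partition rank $\le 1$ unless that mode sits on a singleton side of every rank-one flattening; that $\pR$ is $\GL$-invariant, so that $\sum_{\ell \notin L} w_\ell \bigotimes_{t} e_\ell$ may be identified with $\un_{k-1,\,r-|L|}$; and subadditivity of $\pR$ under sums.
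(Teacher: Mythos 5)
The paper does not prove this lemma at all: it is imported from Naslund \cite{DBLP:journals/jcta/Naslund20a}, so your write-up is a self-contained replacement rather than a variant of an in-paper argument. As such, it is correct in the setting where the lemma is stated (\autoref{sec:basic} assumes $\bbK$ algebraically closed, hence infinite). The upper bound and the base case $k=2$ are immediate, and the inductive step works: every partition-rank-one summand either has a rank-one $\{p\}$-flattening (your mode-$p$ slices, collected into $X$) or has a rank-one flattening whose part containing $p$ has size at least two, and the latter property persists after contracting mode $p$, so for any $w\in\ker(X_{\{p\}})$ the contraction $\un_{k,r}\cdot_p w=Y\cdot_p w$ has partition rank at most $d-s$ by subadditivity. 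The count $|L|\le\rk(X_{\{p\}})\le s$ is right (the functionals $e_\ell$, $\ell\in L$, are linearly independent elements of the annihilator of $\ker(X_{\{p\}})$), the contracted tensor is isomorphic to $\un_{k-1,\,r-|L|}$, and with \autoref{lem:pr-mon} and the induction hypothesis the chain $r-|L|\le d-s$, $|L|\le s$ gives $r\le d$, contradicting $d<r$. This is a clean argument in the spirit of Tao's slice-rank computation, with the mode-$p$ slices handled all at once via the kernel of $X_{\{p\}}$.

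The one step that deserves a flag is the choice of $w\in\ker(X_{\{p\}})$ with $w_\ell\neq 0$ for all $\ell\notin L$: this uses that a vector space over an infinite field is not a finite union of proper subspaces, and it genuinely fails over small finite fields. For instance, over $\bbF_2$ the subspace $\{x_1+x_2+x_3=0\}\subseteq\bbF_2^3$ contains no vector with all coordinates nonzero, even though no coordinate vanishes identically on it, so in that situation your $m$ could be smaller than $r-|L|$. Under the conventions of \autoref{sec:basic} this is harmless, but the lemma is later used (through \autoref{lem:subrank}) in the proof of \autoref{th:pr-gap}, which is asserted over arbitrary fields, including finite ones; Naslund's proof is field-independent, which is precisely what the citation buys and what your generic-vector step would not provide without modification at that point.
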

Another basic property of the partition rank is that it is monotone under restriction:
\begin{lemma}\label{lem:pr-mon}
If $T \geq S$ then $\pR(T) \geq \pR(S)$.
\end{lemma}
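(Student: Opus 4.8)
The plan is to reduce the statement to the behaviour of a single partition-rank-one tensor under restriction. The first step is to record the standard reformulation of partition rank one: for a fixed nonempty proper subset $I \subsetneq [k]$, under the identification $V_1 \ootimes V_k \cong (\textbigotimes_{i \in I} V_i) \otimes (\textbigotimes_{j \notin I} V_j)$, a tensor $A$ satisfies $\rk(A_I) \le 1$ if and only if $A = u \otimes v$ for some $u \in \textbigotimes_{i \in I} V_i$ and $v \in \textbigotimes_{j \notin I} V_j$ — this is just the fact that a $2$-tensor has rank at most one precisely when it is a pure tensor (here we use that the $V_i$ are finite-dimensional to identify $(\textbigotimes_{i\in I}V_i^*)^*$ with $\textbigotimes_{i\in I}V_i$).

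Next, suppose $T \geq S$, witnessed by linear maps $\pi_i : V_i \to W_i$ with $(\pi_1 \ootimes \pi_k) T = S$, and set $\pi = \pi_1 \ootimes \pi_k$. The key step is that $\pi$ maps any tensor of partition rank at most one to a tensor of partition rank at most one: if $A = u \otimes v$ across the split $I$ as above, then $\pi(A) = \bigl( (\textbigotimes_{i \in I} \pi_i) u \bigr) \otimes \bigl( (\textbigotimes_{j \notin I} \pi_j) v \bigr)$ is again a pure tensor across the split $I$, now living in $W_1 \ootimes W_k$; it therefore has partition rank at most one, and may of course be zero.

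Finally I would conclude by linearity. Writing $r = \pR(T)$ and $T = A_1 + \cdots + A_r$ with each $A_i$ of partition rank one, applying $\pi$ yields $S = \pi(A_1) + \cdots + \pi(A_r)$, where each summand has partition rank at most one by the previous step. Discarding the summands that vanish, $S$ is expressed as a sum of at most $r$ partition-rank-one tensors (the empty sum if they all vanish, i.e.\ $S = 0$), so $\pR(S) \leq r = \pR(T)$. I do not expect any genuine obstacle here: the only points requiring a little care are the finite-dimensionality used in the reformulation of partition rank one, the bookkeeping when some of the images $\pi(A_i)$ are zero, and the convention $\pR(0) = 0$ that is built into allowing an empty decomposition.
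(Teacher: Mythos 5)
Your proposal is correct and follows essentially the same route as the paper: decompose $T$ into $\pR(T)$ partition-rank-one summands, apply the restriction by linearity, note each image has partition rank at most one (possibly zero), and conclude. The only cosmetic difference is that you justify the key step via the pure-tensor reformulation of rank-one flattenings, whereas the paper cites monotonicity of matrix rank of the corresponding flattening under restriction; these are the same fact.
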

\begin{proof}
This is an immediate consequence of the fact that matrix rank is monotone under restriction. Explicitly, let $S = (\pi_1 \ootimes \pi_k)(T)$ and let $T = T_1 + \cdots + T_r$ be an expression of $T$ as a sum of tensors $T_j$ with $r = \pR(T)$ and $\pR(T_j) =1$. By linearity 
\[
 S = (\pi_1 \ootimes \pi_k)(T_1 + \cdots + T_r) = (\pi_1 \ootimes \pi_k)(T_1)  + \cdots + (\pi_1 \ootimes \pi_k)(T_r);
\]
clearly, for every $j$, $(\pi_1 \ootimes \pi_k)(T_j)$ is either $0$ or a tensor of partition rank one, because if a certain flattening of $T_j$ has rank one, then the same flattening of $(\pi_1 \ootimes \pi_k)(T_j)$ has rank at most one. Hence, $S$ admits an expression as sum of $r$ tensors having partition rank one, which guarantees $\pR(S) \leq \pR(T)$.
\end{proof}

It follows directly from \autoref{lem:pr-norm}, \autoref{lem:pr-mon} and the definition of subrank that partition rank bounds subrank from above:
\begin{lemma}\label{lem:subrank}
For every tensor $T$ we have $\subrank(T) \leq \pR(T)$.
\end{lemma}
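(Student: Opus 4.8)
The statement to prove is Lemma~\ref{lem:subrank}: for every tensor $T$ we have $\subrank(T) \leq \pR(T)$.

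\medskip

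The plan is to deduce this directly from the three ingredients the excerpt has already assembled: the definition of subrank as the largest $r$ with $T \geq \un_{k,r}$, the monotonicity of partition rank under restriction (\autoref{lem:pr-mon}), and the normalization $\pR(\un_{k,r}) = r$ (\autoref{lem:pr-norm}). First I would set $r = \subrank(T)$. By the definition of subrank, there exist linear maps realizing a restriction $T \geq \un_{k,r}$. Applying \autoref{lem:pr-mon} to this restriction gives $\pR(T) \geq \pR(\un_{k,r})$. Then \autoref{lem:pr-norm} identifies the right-hand side: $\pR(\un_{k,r}) = r = \subrank(T)$. Chaining these gives $\pR(T) \geq \subrank(T)$, which is exactly the claim.

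\medskip

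There is no real obstacle here; the lemma is a formal consequence of the two preceding lemmas, and the only thing to be careful about is the edge case $T = 0$, where $\subrank(T) = 0 \leq \pR(T) = 0$ holds trivially (and the argument above still goes through with $r = 0$, reading $\un_{k,0}$ as the zero tensor). It is worth noting that this is an instance of the general principle, made explicit later in the introduction, that every normalized monotone bounds the subrank from below; here $\pR$ plays the role of the normalized monotone $f$, and the proof is precisely the one-line argument "$f(T) \geq f(\un_{k,r}) = r = \subrank(T)$". I would therefore keep the proof to two or three sentences and simply cite \autoref{lem:pr-norm} and \autoref{lem:pr-mon}.
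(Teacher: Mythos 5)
Your proof is correct and is exactly the argument the paper intends: the lemma is stated as a direct consequence of \autoref{lem:pr-norm}, \autoref{lem:pr-mon} and the definition of subrank, which is precisely the chain $\pR(T) \geq \pR(\un_{k,r}) = r = \subrank(T)$ you give. Nothing further is needed.
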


\section{Preserving non-trivial partition rank under restriction}\label{sec:prank-restr}

In this section we will prove that the property of a tensor having partition rank at least two is preserved under at least one restriction, and therefore under almost all restrictions (i.e.~generic restrictions).

\begin{proposition}\label{prop: good flattenings under restriction}
 Let $k \geq 3$.
 Let $T \in V_1 \ootimes V_k$ have partition rank at least two. 
 For every~$i \in [k]$ let~$W_i$ be a vector space with $\dim(W_i) \geq 2$.
 
 \begin{enumerate}[\upshape(i)]
     \item There are linear maps $\pi_i : V_i \to W_i$ such that $(\pi_1 \ootimes \pi_k)T$ has partition rank at least two.
     \item Let $\pi_i : V_i \to W_i$ be generic linear maps. Then $(\pi_1 \ootimes \pi_k)T$ has partition rank at least two.
 \end{enumerate}
\end{proposition}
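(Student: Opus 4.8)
The plan is to reduce everything to a single ``generic hyperplane'' step, together with the elementary observation that having partition rank at least two is a finite list of open conditions on flattening ranks. Unwinding the definition, a tensor $S$ satisfies $\pR(S)\geq 2$ if and only if $\rk(S_I)\geq 2$ for every nonempty proper $I\subseteq[k]$ (because $\pR(S)\leq 1$ holds precisely when $S=0$ or some flattening $S_I$ has rank one). This reformulates both the hypothesis on $T$ and the conclusions (i)--(ii); it also shows that the set of tuples $\pi=(\pi_1,\dots,\pi_k)$ with $\pR\bigl((\pi_1\ootimes\pi_k)T\bigr)\geq 2$ is Zariski-open in $\prod_i\Hom(V_i,W_i)$, since $(\pi_1\ootimes\pi_k)T$ depends polynomially on the $\pi_i$ and each condition $\rk(S_I)\geq 2$ is the complement of a determinantal locus. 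As $\prod_i\Hom(V_i,W_i)$ is irreducible, part (ii) will follow from part (i) as soon as (i) produces a single good tuple: a nonempty Zariski-open set is dense.

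The core is the following reduction step, which I would establish first. \emph{If $\pR(T)\geq 2$ and $\dim V_p\geq 3$, then for a generic line $\ell\subseteq V_p$ the tensor $T^{(\ell)}$ obtained by applying the quotient $V_p\to V_p/\ell$ in the $p$-th coordinate still satisfies $\pR(T^{(\ell)})\geq 2$.} Take $p=1$. For each nonempty proper $I$ the condition $\rk(T^{(\ell)}_I)\geq 2$ is Zariski-open in $\ell\in\bbP(V_1)$ (semicontinuity of rank, $T^{(\ell)}$ varying algebraically with $\ell$), so it is enough to verify each for at least one $\ell$; the intersection of these finitely many open sets is then dense and gives the claim. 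Fix $I$. If $1\in I$, write the flattening as $M=T_I\colon V_1^*\otimes R^*\to Q$ with $R^*=\bigotimes_{i\in I\setminus\{1\}}V_i^*$ and $Q=\bigotimes_{j\notin I}V_j$; then $\rk(T^{(\ell)}_I)=\rk\bigl(M|_{\ell^\perp\otimes R^*}\bigr)$, and since $\rk M\geq 2$ the maps $M(\xi\otimes-)\colon R^*\to Q$ (for $\xi\in V_1^*$) cannot all have image in one common line, so either one of them has rank $\geq 2$, or two of them, for linearly independent $\xi_1,\xi_2$, have images spanning a $2$-plane; taking a hyperplane $\ell^\perp\subseteq V_1^*$ (which has dimension $\dim V_1-1\geq 2$) through $\xi_1$, respectively through $\xi_1$ and $\xi_2$, makes $\rk(M|_{\ell^\perp\otimes R^*})\geq 2$. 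If $1\notin I$, set $U=\bigotimes_{j\notin I\cup\{1\}}V_j$ and $\Lambda=\im(T_I)\subseteq V_1\otimes U$; then $\dim\Lambda=\rk(T_I)\geq 2$ and $\rk(T^{(\ell)}_I)=\dim\Lambda-\dim\bigl(\Lambda\cap(\ell\otimes U)\bigr)$, so it suffices to find $\ell$ and a $2$-dimensional $\Pi\subseteq\Lambda$ with $\Pi\cap(\ell\otimes U)=0$. If for some fixed $\Pi$ no such $\ell$ existed, every line of $V_1$ would be the first-factor support of a nonzero element of $\Pi$ of rank $\leq 1$; the elements of $\Pi$ of rank $\leq 1$ form a subvariety of $\bbP(\Pi)\cong\bbP^1$, and since their supports would cover $\bbP(V_1)$ this subvariety must be all of $\bbP(\Pi)$; by the classification of $2$-dimensional spaces of rank-at-most-one tensors, $\Pi=v_0\otimes W'$ or $\Pi=V'\otimes u_0$ with $\dim V'=2$, whose first-factor supports form a single point, respectively $\bbP(V')\subsetneq\bbP(V_1)$ (using $\dim V_1\geq 3$) --- in neither case do they cover $\bbP(V_1)$, a contradiction. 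This proves the reduction step.

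Part (i) then follows by iteration. Since $\pR(T)\geq 2$ forces $\rk(T_{\{i\}})\geq 2$ and hence $\dim V_i\geq 2$ for all $i$, I apply the reduction step repeatedly, each time decreasing by one the dimension of some factor of dimension $\geq 3$, until all factors have dimension exactly $2$; composing the quotient maps used yields surjections $\psi_i\colon V_i\to\bbK^2$ with $\pR\bigl((\psi_1\ootimes\psi_k)T\bigr)\geq 2$. Post-composing each $\psi_i$ with an injection $\bbK^2\hookrightarrow W_i$ (available since $\dim W_i\geq 2$) gives the required $\pi_i$, because restriction along an injective tuple preserves every flattening rank. As noted above, part (ii) follows: the set of good tuples is a nonempty Zariski-open, hence dense, subset of the irreducible variety $\prod_i\Hom(V_i,W_i)$, so generic $\pi_i$ have the property.

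The main obstacle is the case $1\notin I$ of the reduction step: one must control the intersections $\Lambda\cap(\ell\otimes U)$ as $\ell$ ranges only over the small family $\bbP(V_1)$ of lines in the first factor, a regime where crude dimension counts in $V_1\otimes U$ give nothing. What rescues the argument is the classification of two-dimensional linear spaces consisting of rank-at-most-one tensors, combined with the hypothesis $\dim V_1\geq 3$, which is exactly what rules out the ``compression-space'' configuration $\Pi=V'\otimes u_0$ with $V'=V_1$.
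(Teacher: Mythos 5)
Your proof is correct, but it follows a genuinely different route from the paper's. The paper first establishes a recursive characterization (\autoref{lemma: characterization good flattening ranks}): $\pR(T)\geq 2$ if and only if some (equivalently every) flattening $T_p$ has rank at least two \emph{and} its image contains an element of partition rank at least two; it then reduces each factor to dimension two in a single step, by choosing two functionals $v_1,v_2\in V_p^*$ with $T_p(v_1)$ of partition rank at least two and $T_p(v_2)$ independent of it, and quotienting by $\langle v_1,v_2\rangle^\perp$. You instead work only with the elementary equivalence ``$\pR\geq 2$ iff every proper flattening has rank $\geq 2$'' and peel off one dimension at a time by a generic line quotient $V_p\to V_p/\ell$, checking each flattening-rank condition separately: the case where the quotiented factor lies on the domain side of the flattening is handled by a direct choice of hyperplane $\ell^\perp$ through one or two suitable functionals, and the case where it lies on the image side uses the classification of two-dimensional spaces of rank-$\leq 1$ matrices (the paper's \autoref{lemma: rank one subspaces}) together with a covering argument on $\bbP(\Pi)\cong\bbP^1$, which is exactly where $\dim V_p\geq 3$ enters; both proofs finish identically, embedding $\bbK^2\hookrightarrow W_i$ and deducing (ii) from Zariski-openness and irreducibility. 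Your argument is self-contained and in fact proves the slightly stronger statement that a \emph{generic} one-dimensional reduction already preserves $\pR\geq 2$; the paper's detour through \autoref{lemma: characterization good flattening ranks} is not wasted, however, since that lemma is reused as a key ingredient in the proof of \autoref{thm: degeneration to w explicit}, which your approach would not supply. Minor points (lower semicontinuity of the flattening ranks in $\ell$, and the rank-one-subspace classification you invoke without proof) are standard and match facts the paper itself treats as such.
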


The meaning of the term ``generic'' in Claim (ii) of \autoref{prop: good flattenings under restriction} is that there is a nonempty Zariski open subset $U$ of the set of all $k$-tuples of linear maps~$V_i \to W_i$ such that for every $(\pi_1, \ldots, \pi_k)\in U$ we have that $(\pi_1 \ootimes \pi_k)T$ has partition rank at least two. Claim (ii) in \autoref{prop: good flattenings under restriction} follows from a standard semicontinuity argument from Claim (i), since the property of having partition rank at least two is a Zariski-open condition.

\begin{remark}
We do not know whether an analog of \autoref{prop: good flattenings under restriction} holds for higher partition rank. In particular, we do not know whether there are tensors $T$ with $\pR(T) = r$ but with the property that $\pR(T') < r$ for all restrictions $T'$ of $T$ to $\bbK^r \ootimes \bbK^r$. In fact, our proof of \autoref{prop: good flattenings under restriction} relies on \autoref{lemma: characterization good flattening ranks}, which relies on the fact that $\pR(T) \geq 2$ is detected by flattenings ranks; this is not true for higher partition rank, and it is unclear whether some analog of \autoref{lemma: characterization good flattening ranks} can hold in general. 

We point out, however, that there exists an integer $m_{k,r}$, depending only on $r$ and on the number of tensor factors $k$, such that a tensor $T \in V_1 \ootimes V_k$ satisfies $\pR(T) \leq r$ if and only if all restrictions $T'$ of $T$ to $\bbK^{m_{k,r}} \ootimes \bbK^{m_{k,r}}$ satisfy $\pR(T') \leq r$. In particular, $m_{k,r}$ does not depend on the dimensions of $V_1 \vvirg V_k$. This is a consequence of the theory of polynomial functors; we only briefly outline a sketch of the proof, and we refer to  \cite{Draisma:TopologicalNoetherianityPolyFunctors,Bik:StrengthNoethThesis} for the theory. Consider the tensor product functor $\otimes$ sending the tuple of vector spaces $(V_1 \vvirg V_k)$ to $V_1 \ootimes V_k$. For every $r$, the assignment $X_r(V_1 \vvirg V_k) = \bar{\{ T \in V_1 \ootimes V_k : \pR(T) \leq r\}}$ defines a \emph{closed subset} of the functor $\otimes$, in the sense of \cite[Def. 1.3.18]{Bik:StrengthNoethThesis}. By Noetherianity, see \cite[Corollary 9]{Draisma:TopologicalNoetherianityPolyFunctors}, $X_r$ is defined by finitely many $(\GL_\infty^{\times k})$-modules of equations; define $m_{k,r}$ to be the smallest integer such that all these modules appear in $\Sym(\bbK^{m_{k,r}} \ootimes \bbK^{m_{k,r}})$. Then the desired property is satisfied. The result of  \autoref{prop: good flattenings under restriction} is that $m_{k,1} = 2$. In \cite[Prop. 3.1]{karam:highminorshightensors}, the author provides an example of a tensor $T \in \bbK^4 \otimes \bbK^{11} \otimes \bbK^{15}$ with $\pR(T) = 4$ and the property that every ``coordinate'' restriction of $T$ to $\bbK^4 \otimes \bbK^4 \otimes \bbK^4$ has partition rank at most $3$. If one could prove this holds for all restrictions, then one would obtain $m_{k,3} > 4$.
\end{remark}

The proof of \autoref{prop: good flattenings under restriction} uses two lemmas. The first one is a simple classical fact about linear subspaces of rank-one matrices. We will give the proof for the convenience of the reader.

\begin{lemma}\label{lemma: rank one subspaces}
 If $U \subseteq V_1 \otimes V_2$ is a linear subspace such that all elements of $U$ have rank one, then either $U \subseteq v_1 \otimes V_2$ for some element $v_1 \in V_1$ or $U \subseteq V_1 \otimes v_2$ for some element $v_2 \in V_2$.
\end{lemma}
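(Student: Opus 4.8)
The plan is to exploit the elementary fact that a sum of two rank-one matrices has rank two as soon as their ``row lines'' and their ``column lines'' are both distinct. I read the hypothesis in the natural way: every \emph{nonzero} element of $U$ has rank exactly one (equivalently, every element of $U$ has rank at most one). The case $U = 0$ is trivial, since then $U \subseteq v_1 \otimes V_2$ for any $v_1$, so I assume $U \neq 0$ from now on. The first step is to record the key pairwise dichotomy: \emph{whenever $a_1 \otimes b_1$ and $a_2 \otimes b_2$ lie in $U$, either $a_1, a_2$ are linearly dependent or $b_1, b_2$ are linearly dependent.} Indeed, $a_1 \otimes b_1 + a_2 \otimes b_2 \in U$ has rank at most one; but if $a_1, a_2$ were independent and $b_1, b_2$ were independent, then the $1$-flattening of $a_1 \otimes b_1 + a_2 \otimes b_2$ sends a dual vector pairing to $(1,0)$ on $(b_1, b_2)$ to $a_1$ and one pairing to $(0,1)$ to $a_2$, so this flattening has rank two, a contradiction.

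Next I would argue by a case split. Suppose $U$ is not contained in $V_1 \otimes v_2$ for any $v_2 \in V_2$; the goal becomes to produce $v_1 \in V_1$ with $U \subseteq v_1 \otimes V_2$. Under this supposition $U$ cannot have all of its nonzero elements sharing a common ``second-factor line'': if every nonzero $a \otimes b \in U$ had $b$ lying in a fixed line $\langle v_2 \rangle$, then $U \subseteq V_1 \otimes v_2$, contrary to assumption. Hence I can choose $a_1 \otimes b_1, a_2 \otimes b_2 \in U$ with $b_1, b_2$ linearly independent; in particular these two tensors are nonzero, so $a_1, a_2$ are nonzero as well. The pairwise dichotomy now forces $a_1$ and $a_2$ to be linearly dependent, and after rescaling one of the representatives we may take $a_1 = a_2 =: v_1 \neq 0$.

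Finally, I would check that every element of $U$ lies in $v_1 \otimes V_2$. Let $c \otimes d$ be an arbitrary nonzero element of $U$. Applying the pairwise dichotomy to $\{c \otimes d,\ v_1 \otimes b_1\}$ and to $\{c \otimes d,\ v_1 \otimes b_2\}$: if $c$ and $v_1$ were linearly independent, we would get $d \parallel b_1$ and $d \parallel b_2$, forcing $b_1 \parallel b_2$, which is false. Hence $c \in \langle v_1 \rangle$, so $c \otimes d \in v_1 \otimes V_2$; since $0 \in v_1 \otimes V_2$ as well, this shows $U \subseteq v_1 \otimes V_2$, as desired. I do not expect any genuine obstacle here: the argument is purely linear-algebraic bookkeeping around the rank-two observation, and the only points requiring a moment of care are the convention that ``rank one'' tolerates the zero tensor and the harmless degeneracy that a chosen witness could be zero — which is ruled out precisely because we pick witnesses with linearly independent second factors.
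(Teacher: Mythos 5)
Your proposal is correct and rests on exactly the same key observation as the paper's proof: a sum of two rank-one tensors whose first factors are independent and whose second factors are independent has rank two, contradicting the hypothesis on $U$. The only difference is that you spell out the bookkeeping (the pairwise dichotomy and the choice of witnesses with independent second factors) that the paper compresses into its one-line contradiction, which is a fine, slightly more explicit rendering of the same argument.
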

\begin{proof}
 Suppose to the contrary that there exist elements $T = v_1 \otimes v_2 \in U$ and $T' = v_1' \otimes v_2' \in U$ such that the vectors $v_1$ and $v_1'$ are linearly independent, and the vectors $v_2$ and $v_2'$ are linearly independent. Then $T + T'$ has rank two and is an element of $U$, which gives a contradiction.
\end{proof}

The second lemma characterizes a tensor having partition rank at least two in a recursive fashion in terms of the image of the flattenings of the tensor. For tensors of order three, the same result follows essentially from \cite[Proposition 22]{Geng:GeometricRankLinDetVars}. Recall that for any $p \in [k]$ we use the notation $T_p = T_{\{p\}}$ for the $p$-flattening of $T$.

\begin{lemma}\label{lemma: characterization good flattening ranks}
 Let $k\geq 3$ and $T \in V_1 \ootimes V_k$. The following are equivalent:
 \begin{enumerate}[\upshape(a)]
     \item The partition rank of $T$ is at least two.
     \item For every $p \in [k]$ we have
     \begin{enumerate}[\upshape(i)]
     \item $\rk(T_p) \geq 2$ and
     \item $\im(T_p)$ contains an element with partition rank at least two.
     \end{enumerate}
     \item For some $p \in [k]$ we have \begin{enumerate}[\upshape(i)]
     \item $\rk(T_p) \geq 2$ and
     \item $\im(T_p)$ contains an element with partition rank at least two.
     \end{enumerate}
 \end{enumerate}
\end{lemma}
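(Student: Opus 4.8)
The plan is to prove the equivalences by the cycle $(a) \Rightarrow (b) \Rightarrow (c) \Rightarrow (a)$, with the implication $(b) \Rightarrow (c)$ being trivial (take any $p$, which is legitimate since $k \geq 3$ guarantees $[k] \neq \emptyset$). The substantive work is in $(a) \Rightarrow (b)$ and $(c) \Rightarrow (a)$, and I expect $(c) \Rightarrow (a)$ to be the main obstacle, as explained below.

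\medskip

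\noindent\textbf{Proof of $(a) \Rightarrow (b)$.} Suppose $\pR(T) \geq 2$ and fix $p \in [k]$; write $p = k$ after reordering for notational convenience. For part (i): if $\rk(T_k) = 1$ then $T$ has partition rank one (taking $I = \{k\}$ in the definition), a contradiction; and $\rk(T_k) = 0$ means $T = 0$. So $\rk(T_k) \geq 2$. For part (ii), I argue by contraposition on the structure of $\im(T_k) \subseteq V_1 \ootimes V_{k-1}$: suppose every element of $\im(T_k)$ has partition rank at most one (as a $(k-1)$-tensor). I want to deduce $\pR(T) \leq 1$. Here I would use \autoref{lemma: rank one subspaces} in spirit, but adapted to partition rank: the key is that if a linear subspace $U$ of $(k-1)$-tensors consists entirely of tensors of partition rank $\leq 1$, I must show that there is a \emph{single} splitting $[k-1] = A \sqcup B$ and a single ``side'' witnessing the flattening having rank one for all of $U$ simultaneously — or more precisely, that $T$, viewed appropriately, inherits a partition-rank-one structure. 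Concretely, write $T = \sum_j e_j \otimes M_j$ where $e_j$ runs over a basis of $V_k^*$'s dual and $M_j \in V_1 \ootimes V_{k-1}$ span $\im(T_k)$; the hypothesis is each nonzero element of this span has partition rank $\leq 1$. I would stratify the span by which proper subset $I \subsetneq [k-1]$ gives the rank-one flattening; a Ramsey/irreducibility argument (the set of tensors with a fixed such $I$ is Zariski closed, and a vector space that is a finite union of proper subspaces... ) forces a single $I$ to work on a dense subset, hence on all of $U$ by closure. Then all $M_j$ factor through that $I$-flattening having rank $\leq 1$, meaning $M_j \in (\text{fixed vector}) \otimes (\text{rest})$ after possibly passing to the other side, and reassembling shows $\rk(T_{I}) \leq 1$ or $\rk(T_{I \cup \{k\}}) \leq 1$, i.e. $\pR(T) \leq 1$. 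The delicate point is handling the ``which side'' ambiguity when $|I| \geq 2$, but \autoref{lemma: rank one subspaces} applied to the appropriate $2 \times 2$ block reduction resolves it.

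\medskip

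\noindent\textbf{Proof of $(c) \Rightarrow (a)$.} This is the hard direction. Assume for some $p$, say $p = k$, that $\rk(T_k) \geq 2$ and $\im(T_k)$ contains a $(k-1)$-tensor $M$ of partition rank at least two. I must show $\pR(T) \geq 2$, i.e. $T$ does not have partition rank one: suppose toward contradiction that $\rk(T_I) = 1$ for some $\emptyset \neq I \subsetneq [k]$. If $k \in I$, then $T_I$ having rank one forces, by contracting, that every element of $\im(T_k) = \im(T_{\{k\}})$ lies in a rank-one-flattening configuration governed by $I \setminus \{k\}$, contradicting that $M \in \im(T_k)$ has partition rank $\geq 2$ — this case is quick. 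If $k \notin I$, then $\rk(T_I) = 1$ means $T = a \otimes b$ with $a \in \bigotimes_{i \in I} V_i$ and $b \in \bigotimes_{j \notin I} V_j$ (a rank-one flattening across $I$); in particular, contracting the $V_k^*$ index, every element of $\im(T_k)$ is of the form $a \otimes b'$ for the \emph{fixed} $a$, hence has a rank-one $I$-flattening as a $(k-1)$-tensor, so partition rank $\leq 1$ — again contradicting $M$. So in both cases we reach a contradiction, giving $\pR(T) \geq 2$. The subtlety I expect to fight with is making the ``contract the $V_k$-index and see what happens to a flattening'' bookkeeping precise when $I$ straddles the factors — i.e. carefully tracking that $\rk(T_I) = 1$ really does descend to $\rk(M_{I'}) = 1$ for every $M \in \im(T_k)$ and the induced $I' = I$ or $I \setminus \{k\}$ — but this is a direct unwinding of tensor contraction and the multiplicativity $\rk((\text{id} \otimes \phi)(T)) \leq \rk(T)$ for flattenings; condition (i), $\rk(T_k) \geq 2$, is what rules out the degenerate possibility $I = \{k\}$ (or $I^c = \{k\}$) making the argument vacuous.
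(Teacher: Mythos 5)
Your proposal is correct, and it splits into a half that mirrors the paper and a half that takes a genuinely different route. For (a)$\Rightarrow$(b)(ii), your contrapositive argument is in substance the paper's: the paper shows, for each $J$, that $\{S \in \im(T_k) : \rk(S_J)\geq 2\}$ is a nonempty Zariski-open subset of the irreducible linear space $\im(T_k)$ and intersects these finitely many opens, while you cover $\im(T_k)$ by the finitely many closed loci $\{S : \rk(S_I)\leq 1\}$ and use irreducibility to fix a single $I$ --- dual phrasings of one argument, both finishing with \autoref{lemma: rank one subspaces}. (Two small notes: it is irreducibility of the linear space, not any fact about finite unions of proper subspaces, that does the work here, under the standing assumption that $\bbK$ is algebraically closed; and the ``which side'' ambiguity you worry about is exactly what \autoref{lemma: rank one subspaces} settles, so no extra $2\times 2$ reduction is needed.) For (c)$\Rightarrow$(a) your route genuinely differs from the paper's and is arguably more direct: you suppose $\rk(T_I)=1$ for some proper nonempty $I$, write $T=a\otimes b$, and contract the $k$-th index to conclude that every element of $\im(T_k)$ has partition rank at most one, unless $I=\{k\}$ or $I=[k-1]$, which is precisely what condition (i) excludes; either way this contradicts (i) or (ii), and the bookkeeping you flag as delicate is indeed just the observation that contracting the $k$-th slot of $a\otimes b$ keeps the fixed factor on one side of the induced flattening. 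The paper instead uses (i) and (ii) to decompose $T = S\otimes v^{(k)}_1 + S'\otimes v^{(k)}_2 + T'$ with $\pR(S)\geq 2$ and $S'$ independent of $S$, passes to the restriction $\tilde T = S\otimes v^{(k)}_1 + S'\otimes v^{(k)}_2$, and verifies $\rk(\tilde T_I)\geq 2$ by a three-case analysis using monotonicity of flattening ranks under restriction. Your argument avoids the auxiliary $S'$ and the case analysis; the paper's decomposition has the side benefit that it is exactly the form in which the lemma is later invoked (in the proof of \autoref{thm: degeneration to w explicit}), but as a proof of the lemma itself your shorter contradiction is equally valid.
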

\begin{proof}
We first prove $(a) \Rightarrow (b)$.
We give the proof for $p = k$. For the other values of $p$ the claim follows from the same proof by reordering the factors. Suppose that $\pR(T) \geq 2$. Clearly condition (i) holds. To show that condition (ii) holds, we need to show that there exists $S \in \im (T_k) \subseteq V_1 \ootimes V_{k-1}$ such that, for every subset $J \subseteq [k-1]$ with $J \neq \emptyset$ and $J\neq [k-1]$ we have $\rk(S_J) \geq 2$. 
 Note that for every fixed $J$, the condition $\rk(S_J) \geq 2$ is Zariski-open. The intersection of any two non-empty Zariski-open subsets is a non-empty Zariski-open subset. Therefore, it suffices to show that for every~$J$ there exists $S \in \im (T_k)$ such that $\rk(S_J) \geq 2$. Fix $J$ and suppose for a contradiction that for every $S \in \im (T_k)$ we have $\rk(S_J) =1$. 
 Then $\im ( T_k)$ is a subspace of rank-one elements in $( \bigotimes _{j \in J} V_j ) \otimes (\bigotimes _{i \not\in J} V_i)$. 
 Then by \autoref{lemma: rank one subspaces} we have that $\im (T_k) \subseteq a_J \otimes (\bigotimes _{i \not\in J} V_i)$ for some vector $a_J \in \bigotimes _{j \in J} V_j$ or $\im ( T_k) \subseteq ( \bigotimes _{j \in J} V_j ) \otimes b_{J}$ for some vector $b_{J} \in \bigotimes _{i \not\in J} V_i$. 
 In the first case we find $T \in a_J \otimes (\bigotimes _{i \not\in J} V_i) \otimes V_k$. This implies $\rk(T_J) = 1$ which is in contradiction with the assumption $\pR(T) \geq 2$. We similarly obtain a contradiction in the second case.

 The implication (b) $\Rightarrow$ (c) is clear. 
 
 We now prove (c) $\Rightarrow$ (a). By condition (ii), there exists an $S \in \im (T_k)$ with $\pR(S) \geq 2$. By condition (i) there exists an $S' \in \im (T_k)$ that is linearly independent from $S$. Then, there exist vectors $v^{(k)}_1,v^{(k)}_2 \in V_k$ and a subspace $V_k' \subseteq V_k$ such that the intersection of the linear span $\langle  v^{(k)}_1,v^{(k)}_2 \rangle$ and $V_k'$ is zero, and 
 \[
  T = S \otimes v^{(k)}_1 + S' \otimes v^{(k)}_2 + T'
 \]
for some $T' \in V_1 \ootimes V_{k-1} \otimes V_k'$. Let $\tilde{T} = S \otimes v^{(k)}_1 + S' \otimes v^{(k)}_2 $. The tensor $\tilde{T}$ is the image of the linear map $V_k \to V_k/V_k'$ applied to the $k$th factor.
Since the rank of flattenings are non-increasing under restriction, it suffices to show that $\rk(\tilde{T}_I) \geq 2$ for every subset $I \subseteq [k]$ with $I \neq \emptyset$ and $I \neq [k]$. We distinguish three cases:
 \begin{itemize}
 \item Suppose $I = \{k \}$. The image of $\tilde{T}_k$ is the linear span $\langle S,S'\rangle$. Since $S$ and $S'$ are linearly independent, $\rk( \tilde{T}_k) = 2$. 
 \item Suppose  $|I|\geq 2$ and $I$ contains $k$. We further restrict $\tilde T$ by applying the linear map $V_k/V_k' \to V_k / (V_k' + \langle v^{(k)}_{2} \rangle)$ on the $k$th factor, obtaining $\hat{T} = S \otimes v^{(k)}_1 $. Then $\rk( \tilde{T}_I) \geq \rk( \hat{T}_{I})$. Then $\im  ( \hat{T}_{I}) = \im ( {S}_{I \setminus \{k\}})$. Since $\pR(S) \geq 2$, we obtain $\rk(\hat{T}_{I}) \geq 2$. 
 \item Suppose $I$ does not contain $k$. 
 Consider the transpose flattening, obtained by replacing~$I$ with its complement $I^c = [k] \setminus I$. Since $\rk(\tilde{T}_I) = \rk(\tilde{T}_{I^c})$, we reduce to one of the previous two cases. 
\end{itemize}
We conclude that $\pR(\tilde{T}) \geq 2$, and therefore $\pR(T) \geq 2$.
\end{proof}

\begin{proof}[Proof of \autoref{prop: good flattenings under restriction}]
Let $T_1 : V_1^* \to V_2 \ootimes V_k$ be the first flattening of $T$. Since $\pR(T) \geq 2$, \autoref{lemma: characterization good flattening ranks} guarantees that $\rank(T_1) \geq 2$ and that $\im (T_1) $ contains an element~$S$ with $\pR(S) \geq 2$.  Let $v_1,v_2 \in V_1^*$ be such that $T_1(v_1) = S$ and such that $T_1(v_2)$ is linearly independent from $S$. For any subset $A \in V^*$ let $A^\perp = \{v \in V : \forall f \in A, f(v) = 0\}$ be the annihilator of $A$.
Let $W_1 = V_1 / \langle v_1,v_2 \rangle^\perp$ and let $\pi_1$ be the quotient projection $V_1 \to W_1$. We let 
\[
T^{(1)} = (\pi_1 \otimes \id \otimes \cdots \otimes \id) T \in W_1 \otimes V_2 \ootimes V_k.
\]
Then $\pR(T^{(1)}) \geq 2$, since by construction the flattening $T^{(1)}_1$ satisfies the two conditions (i) and (ii) of \autoref{lemma: characterization good flattening ranks} (c). For every $j = 2 \vvirg k$ we let $T^{(j)}$ be the tensor obtained from $T^{(j-1)}$ by applying the argument above on the $j$-th flattening $T^{(j-1)}_j$ of $T^{(j-1)}$. 
Then we obtain tensors
\[
T^{(j)} \in W_1 \ootimes W_{j}\otimes V_{j+1} \ootimes V_k
\]
that are restrictions of $T$ and that satisfy $\pR(T^{(j)}) \geq 2$. In particular, $T^{(k)} \in W_1 \otimes \cdots \otimes W_k$ is a restriction of $T$ and satisfies $\pR(T^{(k)}) \geq 2$. Note that $\dim(W_i) = 2$.
This shows that there exist linear maps $\pi_j : V_j \to W_j$ with $\dim(W_i) = 2$ such that $\pR((\pi_1 \ootimes \pi_k)T) \geq 2$. Claim~(i) follows immediately. Since the statement of Claim~(i) is open in the Zariski topology, the same property holds for generic linear maps, which gives Claim (ii).
\end{proof}

\section{Degenerations and Grassmannians}\label{sec:degen-grass}

In this section, we characterize tensor degenerations in terms of degeneration of the image of the flattening, regarded as elements of a certain Grassmannian. In order to state this result precisely, we introduce some additional notions. Further, it is useful to work with an equivalent notion of degeneration, that we introduce in \autoref{def: algebraic degeneration}. We discuss the equivalence between this notion of degeneration and the one given in \autoref{intro}, then we state and prove the main result of this section, \autoref{thm: degenerations via grassmannians}. 

The notion of algebraic degeneration mimics the operation of taking a limit along a curve in the Euclidean topology. To define this precisely, we introduce some notation. Let $G = \GL(V_1) \ttimes \GL(V_k)$ be a product of general linear groups acting linearly on a vector space $V$. The action gives a polynomial map $G \to \GL(V) \subseteq \End(V)$. Let $\bbK[[\eps]]$ denote the ring of power series in one variable $\eps$ and let $\bbK((\eps))$ denote the quotient field of $\bbK[[\eps]]$. Define $V^{\ext} = V \otimes \bbK((\eps))$, and regard it as a $\bbK((\eps))$-vector space. Let $G^\ext = \GL(V_1^\ext) \ttimes \GL(V_1^\ext)$. The action of $G$ on $V$ extends by linearity to an action of $G^\ext$ on $V^\ext$.
\begin{definition}[Algebraic degeneration]\label{def: algebraic degeneration}
 Let $G$ be a product of general linear groups acting linearly on $V$ and let $T,S \in V$. We say that \emph{$S$ is an algebraic $G$-degeneration of $T$} if there exist elements $g_\eps \in G^{\ext}$ and $U_\eps \in V^{\ext}$ such that 
 \[
  g_\eps \cdot T = S + U_\eps
 \]
where $\frac{1}{\eps}U_\eps \in V \otimes \bbK[[\eps]]$. In this case, we will often write $  g_\eps \cdot T = S + O(\eps)$.
\end{definition}
Informally, one can think of the group element $g_\eps$ in \autoref{def: algebraic degeneration} as a ``curve'' of elements in $G$ (parametrized by $\eps$), so that $g_\eps \cdot T$ is a curve in $V$ with the property that $S$ belongs to its closure. Identifying $G$ with a group of invertible matrices, one can think of the elements of~$G^\ext$ as invertible matrices whose entries are power series in the variable $\eps$. 

\autoref{def: algebraic degeneration} is seemingly different from the definition of degenerations of tensors given in \autoref{sec: intro degeneration} via closure in the Zariski topology. It is a classical fact however that these two definitions are equivalent. In other words, $S$ is an algebraic degeneration of $T$, in the sense of \autoref{def: algebraic degeneration} if and only if $S \in \bar{G \cdot T}$. The proof of this fact goes back to Hilbert \cite{Hilbert:UberVollenInvariantensysteme}. For an arbitrary algebraically closed field, the statement is proved in \cite[Sec. 20.6]{BuClSho:Alg_compl_theory} for the action of $\GL(V_1) \times \GL(V_2) \times \GL(V_3)$ on $V_1 \otimes V_2 \otimes V_3$ and in \cite[III.2.3, Lemma 1]{Kra:Geom_Meth_Invarianterntheorie} in the case of $\GL(V)$. The proof in these two special settings is essentially the same, and it applies to the general setting: $S$ is an algebraic $G$-degeneration of $T$ (as in \autoref{def: algebraic degeneration}) if and only if $S$ is in the $G$-orbit closure of $T$.

\begin{remark}\label{rmk: epsilons in and out of product}
Given two spaces $V_1,V_2$, it is a standard fact that $V_1^{(\eps)} \otimes_{\bbK((\eps))} V_2^{(\eps)}$ is isomorphic as a $\bbK((\eps))$-vector space to $(V_1 \otimes_\bbK V_2)^{(\eps)}$. We will omit the subscript from the notation of tensor product and write simply $V_1 \otimes V_2$ or $V_1^{(\eps)} \otimes V_2^{(\eps)}$ meaning $\otimes_\bbK$ in the first case and $\otimes_{\bbK((\eps))}$ in the second case. Similarly, if $V$ is a $\bbK$-vector space, then $\Lambda^k V^{(\eps)}$ is to be read as an exterior power with respect to $\otimes_{\bbK((\eps))}$ whereas $\Lambda ^k V$ is an exterior power with respect to $\otimes_\bbK$.
\end{remark}

An important case that we will consider is the one of the action of a group on a Grassmannian. Given a vector space $W$ and an integer $r$, let $\Gr(r,W)$ be the Grassmannian of $r$-planes in~$W$. Then $\Gr(r,W)$ is a projective variety in $\bbP \Lambda^r W$ via the Pl\"ucker embedding. In this way, a group $G$ acting linearly on $W$, acts on $\Gr(r,W)$ via its induced action on $\Lambda^r W$. \autoref{thm: degenerations via grassmannians} characterizes certain tensor degenerations in terms of degeneration of elements of $\Gr(r,W)$ via this induced action. As a special case of \autoref{thm: degenerations via grassmannians}, let $W = \bbK^{r} \ootimes \bbK^r$, $G = \GL_r \ttimes \GL_r$, and $A = \bbK^r$. Let $T = \un_{k,r}$ be the unit tensor in $A \otimes W$. Then \autoref{thm: degenerations via grassmannians} characterizes the border rank of $S$. This characterization was obtained already in \cite[Theorem~2.5]{BucLan:RanksTensorsAndGeneralization} and \cite[Lemma~2.4]{GesOneVen:PartiallySymComon}.

\begin{theorem}\label{thm: degenerations via grassmannians}
 Let $G$ be a product of general linear groups acting linearly on a space $W$. Let $A$ be a vector space and let $r = \dim A$. Let $T,S \in A \otimes W$ be two elements such that the flattenings $T,S: A^* \to W$ are injective and let $E_T,E_S$ be their images regarded as elements of $\Gr(r,W)$. The following are equivalent:
 \begin{enumerate}[\upshape(a)]
  \item $S$ is a $(\GL(A) \times G)$-degeneration of $T$;
  \item $E_S$ is a $G$-degeneration of $E_T$. 
 \end{enumerate}
\end{theorem}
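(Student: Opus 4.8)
The plan is to prove the two implications separately, using the algebraic (power-series) description of degeneration from \autoref{def: algebraic degeneration}, which is legitimate by the equivalence with Zariski-orbit-closure recalled above. Throughout, I identify $T, S \colon A^* \to W$ with their images $E_T, E_S \in \Gr(r, W)$, and I recall that the Pl\"ucker embedding sends an $r$-plane spanned by $w_1, \dots, w_r$ to the line $[w_1 \wedge \cdots \wedge w_r] \in \bbP \Lambda^r W$. The key translation is that if we pick a basis $a_1, \dots, a_r$ of $A^*$, then $T = \sum_i a_i^* \otimes w_i$ where $w_i = T(a_i)$, and the Pl\"ucker point of $E_T$ is $w_1 \wedge \cdots \wedge w_r$; changing the basis of $A$ by $h \in \GL(A)$ rescales this wedge by $\det(h)$, so the point in $\Gr(r,W) \subseteq \bbP\Lambda^r W$ depends only on $E_T$ and not on the chosen basis.

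For (a) $\Rightarrow$ (b): suppose $(h_\eps, g_\eps) \in \GL(A^*)^\ext \times G^\ext$ gives $(h_\eps \otimes g_\eps) \cdot T = S + O(\eps)$ in $(A \otimes W)^\ext$. Writing $T = \sum_i a_i^* \otimes w_i$ and applying the flattening to $A^*$, this says $g_\eps T h_\eps^{-1} = S + O(\eps)$ as maps $A^* \to W^\ext$; equivalently, the $\bbK((\eps))$-span of the columns $g_\eps \cdot (T h_\eps^{-1})(a_i)$ is an $r$-plane whose Pl\"ucker coordinates, after clearing denominators, specialize at $\eps = 0$ to $E_S$. Concretely, taking wedge products, $(\Lambda^r g_\eps)(w_1 \wedge \cdots \wedge w_r) = \det(h_\eps) \bigl( (\text{Pl\"ucker point of } S) + O(\eps)\bigr)$ up to the scalar $\det(h_\eps) \in \bbK((\eps))^\times$; since degeneration in the Grassmannian is a statement about the \emph{point} of $\bbP \Lambda^r W$, this scalar is harmless, and we conclude $E_S$ is a $G$-degeneration of $E_T$. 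The only care needed is to argue that one may normalize $g_\eps$ (by a suitable power of $\eps$) so that $(\Lambda^r g_\eps)$ applied to the Pl\"ucker point has all entries in $\bbK[[\eps]]$ and a nonzero specialization — this is the standard ``scale the curve'' trick in Grassmannian degenerations, and the nonvanishing of the limit is exactly the injectivity hypothesis on $S$.

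For (b) $\Rightarrow$ (a): suppose $g_\eps \in G^\ext$ and $(\Lambda^r g_\eps)(E_T) \to E_S$ in $\Gr(r,W)$, i.e.\ there is $\lambda_\eps \in \bbK((\eps))^\times$ with $\lambda_\eps \cdot (\Lambda^r g_\eps)(w_1 \wedge \cdots \wedge w_r) = (\text{Pl\"ucker pt of }E_S) + O(\eps)$. I want to produce $h_\eps \in \GL(A^*)^\ext$ so that $(h_\eps \otimes g_\eps) \cdot T = S + O(\eps)$. The idea: the $r$-plane $g_\eps \cdot E_T \subseteq W^\ext$ is spanned by $g_\eps w_1, \dots, g_\eps w_r$; since its limit is $E_S = \langle s_1, \dots, s_r\rangle$ with $S = \sum_i a_i^* \otimes s_i$, there must exist an invertible $r \times r$ matrix $M_\eps$ over $\bbK((\eps))$ such that $\sum_j (M_\eps)_{ij} \, g_\eps w_j = s_i + O(\eps)$ for each $i$ — i.e.\ one can choose a basis of the moving plane that converges coordinate-wise to the chosen basis of $E_S$. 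Setting $h_\eps$ to be the map on $A^*$ with matrix $M_\eps$ in the basis $(a_i^*)$ then gives $(h_\eps \otimes g_\eps) \cdot T = \sum_i a_i^* \otimes (s_i + O(\eps)) = S + O(\eps)$, as desired. The main obstacle, and the step I expect to require the most care, is precisely the construction of this ``convergent basis'' $M_\eps$ of the moving subspace: Pl\"ucker convergence only asserts that the \emph{line} $[\,\wedge_j g_\eps w_j\,]$ converges, so I need to pass to an affine chart of the Grassmannian adapted to $E_S$ (the standard open cell where a fixed $r \times r$ minor is invertible) and use that $E_S$ lies in that chart together with the injectivity of $S$ to extract $M_\eps$ with entries in $\bbK((\eps))$ and the right $\eps = 0$ behaviour; this is a routine but slightly technical local-coordinates argument on $\Gr(r,W)$, and it is where the injectivity hypotheses on both $T$ and $S$ (ensuring $E_T, E_S$ are honest points, not degenerate flattenings) get used.
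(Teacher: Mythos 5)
Your proposal is correct and follows essentially the same route as the paper: for (a)$\Rightarrow$(b) one wedges the images of a basis of $A^*$ and observes that the $\GL(A)$-factor only changes the Pl\"ucker vector by a scalar, and for (b)$\Rightarrow$(a) one extracts a basis $S_{j,\eps}$ of the moving $r$-plane with $S_{j,\eps}=S(\alpha_j)+O(\eps)$ and corrects by an element of $\GL(A^{\ext})$. Your explicit affine-chart argument for producing the matrix $M_\eps$ is precisely the justification of the factorization $E_S+O(\eps)=S_{1,\eps}\wedge\cdots\wedge S_{r,\eps}$ that the paper asserts without detail, so it is a welcome (but not divergent) elaboration.
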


\begin{proof}
By \autoref{rmk: epsilons in and out of product}, when considering the augmentation of the action of $\GL(A) \times G $ on $A \otimes W$, we can consider the group $\GL(A^{\ext}) \times G^{\ext}$.

Let $a_1 \vvirg a_r$ be a $\bbK$-basis of $A$, which is also regarded as a $\bbK((\eps))$-basis of $A^{\ext}$ and let $\alpha_1 \vvirg \alpha_r$ be the dual basis of $A^*$, also regarded as a basis of ${A^{\ext}}^*$.

First we prove that (a) implies (b). Suppose that $S$ is a ($\GL(A) \times G$)-degeneration of $T$. By definition of algebraic degeneration, there exists an element $(x_\eps,g_\eps) \in (\GL(A)\times G)^{\ext} \simeq \GL(A^{\ext}) \times G^{\ext}$ such that 
\[
 (x_\eps \otimes g_\eps)\cdot T = S + O(\eps).
\]
Denote $(x_\eps \otimes g_\eps) \cdot T$ by $T_\eps$. Then $T_\eps \in A^{\ext} \otimes W^{\ext}$. The image $E_{T_{\eps}}$ of the flattening map $T_\eps : A^{[\eps] *} \to W^{\ext}$ has dimension $r$ and has basis
\[
\{ g_\eps T( x_\eps \cdot  \alpha_1) \vvirg g_\eps T( x_\eps \cdot  \alpha_r) \}.
\]
Regarding $E_{T_\eps}$ as an element of $\Lambda^r V^{\ext}$, we have  
\begin{align*}
E_{T_\eps} &= g_\eps \cdot T( x_\eps \cdot  \alpha_1) \wwedge g_\eps \cdot T( x_\eps \cdot  \alpha_r)\\ &= 
(S + O(\eps))(\alpha_1) \wwedge (S + O(\eps))(\alpha_r)\\ &= S(\alpha_1)  \wwedge S(\alpha_r) + O(\eps).
\end{align*}
where the tensor $T$ is identified with its flattening map $T: A^* \to W$.

Notice that, by assumption, the flattening $S : A^* \to W$ is injective and so $E_S$ has dimension $r = \dim(A)$. Therefore, $S(\alpha_1)  \wwedge S(\alpha_r)$ is nonzero and equals $E_S$. 

We obtain that $E_{T_\eps} = E_S + O(\eps)$ is an element of $\Lambda^r W \otimes \bbK[[\eps]]$. We have $E_{(x_\eps \otimes g_\eps) \cdot T} = E_{g_\eps \cdot T}$ since the action of $x_\eps \in \GL(A^{\ext})$ does not change the subspace. Therefore
\[
E_S + O(\eps) = E_{T_{\eps}} = E_{(x_\eps \otimes g_\eps) \cdot T} = E_{g_\eps \cdot T} = g_\eps \cdot E_T.
\]
Hence $E_S$ is a $G$-degeneration of $E_T$. 

Now we prove that (b) implies (a). Assume $E_S$ is a $G$-degeneration of $E_T$. By definition, there exists $g_\eps \in G^{\ext}$ such that $g_\eps \cdot E_T = E_S + O(\eps)$ as an element of $\Lambda^r W^{\ext}$. In particular, $g_\eps \cdot E_T$ is an element of $\Lambda^k W \otimes \bbK[[\eps]]$.

On the one hand,
\[
E_S + O(\eps) =  g_\eps \cdot E_T = g_\eps \cdot (T (\alpha_1)  \wwedge T (\alpha_r)) = (g_\eps \cdot (T_1(\alpha_1))) \wwedge (g_\eps \cdot (T(\alpha_r))) \in \Lambda^rW^{\ext}.
\]
On the other hand, since $E_S + O(\eps)$ is an element of $\Lambda^k W \otimes \bbK[[\eps]]$, we have 
\[
E_S + O(\eps) = S_{1,\eps} \wwedge S_{r,\eps}
\]
for elements $S_{j,\eps} \in W \otimes \bbK[[\eps]]$ such that $S_{j,\eps} = S(\alpha_j) + O(\eps)$.

Define $S_\eps = \sum a_i \otimes S_{j,\eps} \in A^{\ext} \otimes W^{\ext}$ and notice $S_\eps = S + O(\eps)$. Moreover, the flattening image of $S_\eps$ is the same as the one of $g_\eps \cdot T$. This guarantees that there is an element $x_\eps \in \GL(A^{\ext})$ such that $x_\eps (g_\eps \cdot T) = S_\eps$. This results in $(x_\eps \otimes g_\eps) (T) = S_\eps = S + O(\eps)$ as elements of $A^{\ext} \otimes W^{\ext}$, showing that $S$ is a $(\GL(A) \times G)$-degeneration of $T$. This concludes the proof.
\end{proof}

We conclude with an observation providing a generalization of \autoref{thm: degenerations via grassmannians} to the action of arbitrary algebraic linear groups $G$. This requires some familiarity with the language of schemes and properties of discrete valuation rings and is independent from the rest of this work.
\begin{remark}
One can define a notion of algebraic degeneration for any algebraic linear group~$G$, rather than only in the case where $G$ is a product of general linear groups. This definition is given by considering $G$ as a group scheme defined over $\bbK$, and $G^{\ext} = G(\bbK((\eps))$ the set of its $\bbK((\eps))$-points.

In this way, given an algebraic group $G$ acting linearly on a space $V$, one has two natural notions of degenerations: one topological, given as set of points in the closure of a $G$-orbit in the Zariski topology, and one algebraic, given as in \autoref{def: algebraic degeneration} using the action of $G^{\ext}$ on~$V^{\ext}$. These two notions are equivalent in the general setting as well, and this allows one to prove \autoref{thm: degenerations via grassmannians} in the more general setting. However, the proof of the equivalence is more delicate and the arguments of \cite[Sec. 20.6]{BuClSho:Alg_compl_theory} and \cite[III.2.3, Lemma 1]{Kra:Geom_Meth_Invarianterntheorie} require some modifications involving the geometry of curves over discrete valuation rings.
\end{remark}

\section{Degeneration to the W-tensor}
\label{sec:degen-to-W}

In this section, we will prove that if a $k$-tensor has partition rank at least two, then it degenerates to the tensor $\w_k$. We recall the definition of the tensor $\w_k$. For every $j \in [k]$ let~$V_j$ be a vector space of dimension at least two and for every $j \in [k]$ we let $v^{(j)}_0, v^{(j)}_1 \in V_j$ be any two linearly independent vectors. The tensor $\w_k \in V_1 \otimes \cdots \otimes V_k$ is defined as
\[
 \w_k = v^{(1)}_1 \otimes v^{(2)}_0 \ootimes v^{(k)}_0 + \cdots + v^{(1)}_0 \ootimes v^{(k-1)}_0 \otimes v^{(k)}_1.
\]
Up to isomorphism of tensors, this tensor does not depend on the choice of vectors $v_0^{(j)}, v_1^{(j)}$.

\begin{theorem}\label{thm: degeneration to w explicit}
 Let $T \in V_1 \ootimes V_k$ be a tensor with partition rank at least two. Then $\w_k$ is in the orbit closure of $T$, that is, $\w_k \in \bar{(\GL(V_1) \ttimes \GL(V_k)) \cdot T}$.
\end{theorem}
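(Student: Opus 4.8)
The plan is to first collapse to the case of two-dimensional factors, and then induct on $k$, using the Grassmannian characterization of degeneration (\autoref{thm: degenerations via grassmannians}) to peel off one tensor factor at a time.

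\emph{Reduction.} Since $\pR(T) \geq 2$, \autoref{prop: good flattenings under restriction} gives linear maps $\pi_i : V_i \to \bbK^2$ with $T' := (\pi_1 \ootimes \pi_k)T \in (\bbK^2)^{\otimes k}$ still of partition rank at least two. As restriction implies degeneration, degeneration is transitive, and the isomorphism class of $\w_k$ is independent of the chosen pair of independent vectors, it suffices to prove $T' \degengeq \w_k$ inside $(\bbK^2)^{\otimes k}$. So assume $V_j = \bbK^2$ with a fixed basis $v_0^{(j)}, v_1^{(j)}$ for every $j$.

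\emph{Induction on $k$.} For $k=2$: $\pR(T)\geq 2$ forces $\rk(T)=2$, so $T$ is isomorphic to $\un_{2,2}$, hence to $\w_2$, hence $T \degengeq \w_2$. For $k \geq 3$, write $T \in A \otimes W$ with $A=V_1$ and $W = V_2 \ootimes V_k$. By \autoref{lemma: characterization good flattening ranks} with $p=1$, the first flattening $T_1 : A^* \to W$ has rank $\geq 2$, hence rank $2$, so it is injective; set $E_T = \im(T_1) \in \Gr(2,W)$, and let $S \in E_T$ have $\pR(S)\geq 2$ (again by that lemma). Since $\w_k = v_1^{(1)} \otimes (v_0^{(2)} \ootimes v_0^{(k)}) + v_0^{(1)} \otimes \w_{k-1}$, its first flattening is injective too, with image $E_{\w_k} = \langle \w_{k-1},\, v_0^{(2)} \ootimes v_0^{(k)}\rangle$. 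By \autoref{thm: degenerations via grassmannians} it is enough to show $E_T$ degenerates to $E_{\w_k}$ under $G := \GL(V_2) \ttimes \GL(V_k)$.

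\emph{Bringing $\w_{k-1}$ into the plane.} Here $S$ is a $(k-1)$-tensor of partition rank $\geq 2$, so the inductive hypothesis and the equivalence of orbit closure with algebraic degeneration give $g_\eps \in G^{(\eps)}$ with $g_\eps \cdot S = \w_{k-1} + O(\eps)$. Applying $g_\eps$ to the whole plane, $g_\eps \cdot E_T$ is a $\bbK((\eps))$-point of the projective variety $\Gr(2,W)$, which by properness has a well-defined reduction $L \in \Gr(2,W)$; thus $E_T \degengeq L$. Since $g_\eps \cdot S$ lies in $g_\eps \cdot E_T$ and $g_\eps \cdot S \to \w_{k-1} \neq 0$, reducing the Pl\"ucker incidence relation $(g_\eps \cdot S)\wedge (g_\eps \cdot E_T)=0$ modulo $\eps$ yields $\w_{k-1}\wedge L = 0$, i.e.\ $\w_{k-1}\in L$. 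Hence $L = \langle \w_{k-1}, u\rangle$ with $u \notin \langle \w_{k-1}\rangle$. It remains to prove $\langle \w_{k-1}, u\rangle \degengeq E_{\w_k}$ for every such $u$. Expanding $u = \sum_{s\in\{0,1\}^{k-1}} c_s\, v_s$, the unipotent elements $\eta_\gamma : v_1^{(j)} \mapsto v_1^{(j)} + \gamma_j v_0^{(j)}$ with $\sum_j \gamma_j = 0$ fix $\w_{k-1}$ and turn the coefficient of $v_0^{(2)} \ootimes v_0^{(k)}$ in $u$ into $P_u(\gamma) = \sum_s c_s \prod_{j : s_j=1}\gamma_j$; a short divisibility argument shows $P_u$ vanishes identically on $\{\sum_j\gamma_j=0\}$ only when $u \in \langle \w_{k-1}\rangle$, so after applying a suitable $\eta_\gamma$ we may assume $c_{(0,\ldots,0)}\neq 0$. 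Then the torus $\tau_t : v_1^{(j)}\mapsto t\,v_1^{(j)}$ (fixing all $v_0^{(j)}$) scales $\w_{k-1}$ by $t$ and the $v_s$-component of $u$ by $t^{|s|}$, so $\tau_t \cdot \langle \w_{k-1}, u\rangle = \langle \w_{k-1}, \tau_t \cdot u\rangle \to \langle \w_{k-1},\, c_{(0,\ldots,0)} v_0^{(2)} \ootimes v_0^{(k)}\rangle = E_{\w_k}$ as $t\to 0$. By \autoref{thm: degenerations via grassmannians} this gives $T \degengeq \w_k$, closing the induction.

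\emph{Main obstacle.} The delicate point is the last step: having produced $\w_{k-1}$ inside a flattening of $T$, one must degenerate the transverse direction all the way down to the specific rank-one tensor $v_0^{(2)} \ootimes v_0^{(k)}$. The torus $\tau_t$ achieves this only if $u$ has a nonzero pure-$v_0$ coefficient, and it is the unipotent part of $\Stab(\w_{k-1})$ that is available to arrange this; the heart of the matter is the combinatorial fact that $P_u$ is not identically zero on the hyperplane $\{\sum_j \gamma_j = 0\}$ unless $u$ is already a multiple of $\w_{k-1}$, which is precisely what the hypothesis $u \notin \langle \w_{k-1}\rangle$ rules out.
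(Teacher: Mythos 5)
Your proposal is correct and follows essentially the same route as the paper's proof: reduce to $2$-dimensional factors via \autoref{prop: good flattenings under restriction}, induct on $k$ using \autoref{lemma: characterization good flattening ranks} and \autoref{thm: degenerations via grassmannians} to pass to planes in the Grassmannian, and then degenerate the limit plane $\langle \w_{k-1}, u\rangle$ to $E_{\w_k}$ using the unipotent part of the stabilizer of $\w_{k-1}$ (with the same square-free divisibility argument) followed by a torus rescaling. The only differences are presentational: you extract the limit plane via properness of the Grassmannian and the Pl\"ucker incidence relation where the paper expands $g_\eps S'$ as a Laurent series and takes the first coefficient independent of $\w_{k-1}$, and you prove inline what the paper isolates as \autoref{prop: degen with Stabw}.
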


\autoref{thm: degeneration to w explicit} directly implies \autoref{thm: degeneration to w}, stated in the introduction. We prove \autoref{prop: degen with Stabw} as preparation to proving \autoref{thm: degeneration to w explicit}.

\begin{proposition}\label{prop: degen with Stabw}
Let $V_1, \ldots, V_k$ be vector spaces with $\dim V_j = 2$ for every $j \in [k]$. Let $P \in V_1 \ootimes V_k$ be any tensor that is linearly independent from the tensor $\w_{k}$.
Let $H$ be the stabilizer subgroup of $\w_{k}$ under the action of $\GL(V_1) \ttimes \GL(V_k)$ on $V_1 \ootimes V_k$. Then $v^{(1)}_0  \ootimes v^{(k)}_0 \in \bar{( \bbK^\times \times H ) \cdot P} $, where $\bbK^\times = \bbK\setminus \{0\}$ acts by scalar multiplication.
\end{proposition}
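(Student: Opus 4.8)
The plan is to work with $2$-dimensional spaces $V_1, \ldots, V_k$ throughout, and to exploit the explicit structure of the stabilizer $H$ of $\w_k$. The first step is to understand $H$: since $\w_k = \sum_j v^{(1)}_0 \otimes \cdots \otimes v^{(j)}_1 \otimes \cdots \otimes v^{(k)}_0$ (with $v^{(j)}_1$ in the $j$th slot), using coordinates where $v^{(j)}_0 = e_1$ and $v^{(j)}_1 = e_2$, the $\GL_2^{\times k}$-stabilizer of $\w_k$ contains the torus $(t_1, \ldots, t_k)$ with $\prod t_i = 1$ acting diagonally on each factor (after rescaling), together with the lower-triangular unipotent one-parameter subgroups $e_1 \mapsto e_1,\ e_2 \mapsto e_2 + \lambda_j e_1$ — provided $\sum_j \lambda_j = 0$. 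So $H$ contains a large solvable subgroup; the key point is that the $H$-orbit of a generic tensor is large enough that its closure, together with the scaling action, hits the rank-one tensor $v^{(1)}_0 \ootimes v^{(k)}_0 = e_1 \ootimes e_1$.

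**The core computation.**
Next I would expand $P$ in the monomial basis $e_{i_1} \ootimes e_{i_k}$, $i \in \{1,2\}^k$, and track how the unipotent elements $u_\lambda \in H$ (with $\sum \lambda_j = 0$) act. Applying $u_\lambda$ replaces each $e_2$ in slot $j$ by $e_2 + \lambda_j e_1$; expanding, the coefficient of $e_1 \ootimes e_1$ in $u_\lambda \cdot P$ becomes a polynomial in the $\lambda_j$ whose coefficients are the original coefficients of $P$. The plan is then to use a one-parameter substitution $\lambda_j = c_j \eps^{-1}$ (or a suitable rescaling by the torus simultaneously) so that in the limit $\eps \to 0$ the highest-degree term dominates, collapsing $u_\lambda \cdot P$ (after dividing by an appropriate scalar — this is where the $\bbK^\times$ factor enters) to a multiple of $e_1 \ootimes e_1$ plus $O(\eps)$. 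Concretely one combines: (i) a unipotent element pushing everything toward the "all-$e_1$" corner, and (ii) a diagonal torus element $\diag(\eps^{-a_j}, \eps^{a_j})$ normalizing the result so that the surviving coefficient is exactly $1$. One must check that at least one monomial of $P$ actually contributes — i.e. that the polynomial we produce is not identically zero. This is exactly where the hypothesis that $P$ is \emph{linearly independent from $\w_k$} is used: if every contributing term vanished, $P$ would be forced to be a multiple of $\w_k$ (its only nonzero coefficients lying on the support of $\w_k$, and even those constrained), contradicting the hypothesis. I would isolate this as the one genuine case distinction: either $P$ already has a nonzero coefficient at some monomial outside $\supp(\w_k)$ with enough $e_2$'s to be "pushed down", or $P$ is supported on $\supp(\w_k) \cup \{(1,\ldots,1)\}$, in which case the $(1,\ldots,1)$-coefficient or a linear combination forced by independence from $\w_k$ does the job directly.

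**Expected main obstacle.**
The delicate part will be organizing the degeneration so that a \emph{single} element $g_\eps \in \bbK^\times \times H$ achieves the limit — in particular, verifying that the torus rescaling needed to normalize the leading coefficient is itself compatible with the constraint $\prod t_i = 1$ defining $H \cap (\text{torus})$, and that the unipotent exponents can be chosen with $\sum \lambda_j = 0$ while still producing a nonzero leading term. It may be cleanest to pick the exponents $a_j$ and $\lambda_j$ depending on which monomial of $P$ we are using as the "dominant" one, and to handle the constraint $\sum \lambda_j = 0$ by absorbing any discrepancy into the overall $\bbK^\times$ scalar and into a further torus element; I expect a short case analysis on $\supp(P)$ versus $\supp(\w_k)$ to be unavoidable. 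Once the correct $g_\eps$ is written down, the verification that $g_\eps \cdot P = v^{(1)}_0 \ootimes v^{(k)}_0 + O(\eps)$ in the sense of \autoref{def: algebraic degeneration} is a routine expansion, and the equivalence with Zariski orbit-closure membership recorded after \autoref{def: algebraic degeneration} then gives the stated conclusion.
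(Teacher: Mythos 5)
Your overall plan coincides with the paper's: use unipotent elements of $H$ (those with $\sum_j \lambda_j = 0$) to create a nonzero coefficient on $v^{(1)}_0 \ootimes v^{(k)}_0$, then use a diagonal element of $H$ together with the $\bbK^\times$ factor to scale away all other coefficients. The gap sits exactly at the step that carries the real content. Writing $P = \sum_{S \subseteq [k]} P_S\, v_S$ (where $v_S$ has $v^{(j)}_1$ in the slots $j \in S$ and $v^{(j)}_0$ elsewhere), the coefficient of $v^{(1)}_0 \ootimes v^{(k)}_0$ in $u_\lambda \cdot P$ is the multilinear polynomial $\eta(\lambda_1, \ldots, \lambda_k) = \sum_S P_S \prod_{j \in S} \lambda_j$. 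What you must show is that $\eta$ does not vanish identically \emph{on the hyperplane} $\sum_j \lambda_j = 0$, not merely that it is not the zero polynomial (it is the zero polynomial only when $P = 0$). Your check that ``at least one monomial of $P$ contributes'' addresses the wrong condition: for $P = \w_k$ one has $\eta = \sum_j \lambda_j$, which is a nonzero polynomial that vanishes on the entire constraint set, and nothing in your sketch rules out analogous vanishing for other tensors. The paper closes this by noting that $\eta$ is square-free in each variable, so it can be divisible by the linear form $\sum_j \lambda_j$ only if $\deg \eta \leq 1$; in that degree-one case the support of $P$ lies in the support of $\w_k$, and linear independence from $\w_k$ forces the coefficients $P_{\{j\}}$ not to be all equal, so $\eta$ is a linear form not proportional to $\sum_j \lambda_j$ and hence nonzero somewhere on the hyperplane. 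You invoke linear independence, but only in a way that covers this degenerate case, and the divisibility/multilinearity observation that disposes of all higher-degree cases is absent.

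Moreover, your proposed fallback for the constraint --- ``absorbing any discrepancy into the overall $\bbK^\times$ scalar and into a further torus element'' --- cannot work: an element $u_\lambda$ with $\sum_j \lambda_j \neq 0$ sends $\w_k$ to $\w_k + \bigl(\sum_j \lambda_j\bigr)\, v^{(1)}_0 \ootimes v^{(k)}_0$, which is not a scalar multiple of $\w_k$, so $u_\lambda \notin \bbK^\times \cdot H$ and no subsequent scalar or diagonal correction places the composite back inside the allowed group $\bbK^\times \times H$. Once the nonvanishing of $\eta$ on the hyperplane is established (for a fixed generic $\lambda$, as in the paper, or along your ray $\lambda_j = c_j \eps^{-1}$), the rest of your outline --- a diagonal degeneration in $H$ such as $\diag(\eps^{-1}, \eps^{k-1})^{\otimes k}$ rescaled by $\eps^{k} \in \bbK^\times$ --- does finish the argument exactly as in the paper, so the missing piece is localized but essential.
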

\begin{proof}
Write $v_{i_1 \vvirg i_k} = v^{(1)}_{i_1}  \ootimes v^{(k)}_{i_k}$. The elements $\{v_{i_1, \ldots, i_k} : i_1, \ldots, i_k = 0,1\}$ form a basis of $V_1 \ootimes V_k$. Write $P$ as a linear combination of the $v_{i_1, \ldots, i_k}$ with coefficients $P_{i_1, \ldots, i_k} \in \bbK$
\[
 P = \sum_{i_1 \vvirg i_k} P_{i_1 \vvirg i_k} v_{i_1 \vvirg i_k}
\]
where the sum goes over all $i_1, \ldots, i_k \in \{0,1\}$.
First, assume $P_{0\vvirg 0} \neq 0$. Define the matrix
\[
 h_\eps = \left( \begin{array}{cc} \eps^{-1} & 0\\0 & \eps^{k-1} \end{array} \right).
\]
Then $h_\eps^{\otimes k} \in H$. Let $g_\eps = \eps^{k} \cdot h_\eps^{\otimes k}$. Then
\[
 g_\eps  v_{i_1 \vvirg i_k}  = \eps ^{k(i_1 + \cdots + i_k) } v_{i_1 \vvirg i_k}.
\]
In particular, $g_\eps P = P_{0\vvirg 0}v_{0 \vvirg 0} + O(\eps)$. This guarantees $v_{0 \vvirg 0} \in \bar{(\bbK^\times \times H ) \cdot P}$.

Suppose that $P_{0 \vvirg 0} = 0$. 
 Define
 \begin{align*}
 h(s_1 \vvirg s_{k}) = \Bigl( \left( \begin{smallmatrix} 1 & s_1 \\ 0 & 1 \end{smallmatrix} \right) \vvirg \left( \begin{smallmatrix} 1 & s_{k} \\ 0 & 1 \end{smallmatrix} \right) \Bigr).
 \end{align*} 
If $s_1 + \cdots + s_k = 0$, then $h(s_1 \vvirg s_{k}) \in H$. Let $m = \max \{ \sum_{j=1}^k i_j : P_{i_1 \vvirg i_k} \neq 0\}$ be the largest ``weight'' of an element appearing in the support of $P$.
Then
\[
h(s_1 \vvirg s_{k}) P = \eta(s_1 \vvirg s_k) v_{0 \vvirg 0} + Q 
\]
where $\eta(s_1 \vvirg s_k)$ is a polynomial in $s_1 \vvirg s_k$ of degree $m$ and no constant term and $Q$ is a tensor for which the coefficient of $v_{0 \vvirg 0}$ is $0$. We will show that $\eta(s_1 \vvirg s_k)$ is not identically $0$ on the hyperplane $s_1 + \cdots + s_k = 0$. This guarantees that there is a choice of $(s_1 \vvirg s_k)$ such that $h(s_1 \vvirg s_k)$ is an element of the stabilizer subgroup $H$ of $\w_k$ and the coefficient of $v_{0 \vvirg 0}$ in $h(s_1 \vvirg s_{k})P$ is nonzero. Consider two cases:
\begin{itemize}
 \item If $ m \geq 2$, then $\eta$ is a polynomial of degree at least $2$. Note that by construction all monomials appearing in $\eta(s_1 \vvirg s_k)$ are square-free, namely no variable $s_i$ appears with exponent larger than $1$. If $\eta(s_1 \vvirg s_k) \equiv 0$ on the hyperplane $s_1 + \cdots + s_k = 0$, then the linear form $s_1 + \cdots + s_k$ must divide $\eta(s_1 \vvirg s_k)$. But every multiple of $s_1 + \cdots + s_k$ has at least one monomial which is not square-free. Therefore, $\eta(s_1 \vvirg s_k)$ does not vanish identically on the hyperplane $s_1 + \cdots + s_k = 0$.
 \item If $m =1$, then the support of $P$ is contained in the support of $\w_k$. In this case $\eta(s_1 \vvirg s_k) = P_{1, 0 \vvirg 0} s_1 + \cdots + P_{0 \vvirg 0, 1}s_k$. Since $P$ and $\w_k$ are linearly independent, the coefficients $P_{1, 0 \vvirg 0} \vvirg P_{ 0 \vvirg 0,1}$ of $P$ are not all equal. Therefore $\eta(s_1 \vvirg s_k)$ is not a scalar multiple multiple of $s_1 + \cdots + s_k$ and so it does not vanish identically on the hyperplane $s_1 + \cdots + s_k = 0$.
\end{itemize}
Define $\tilde{P} = h(s_1 \vvirg s_{k}) (P)$ for a generic choice of $s_1 \vvirg s_{k}$. The argument above shows that the coefficient of $v_{0 \vvirg 0}$ in $\tilde{P}$ is nonzero. From the first part of the proof, we obtain 
\[
v_{0 \vvirg 0} \in \bar{(\bbK^\times \times H )\cdot \tilde{P}} =  \bar{(\bbK^\times \times H )\cdot P},
\]
and this concludes the proof.
\end{proof}

We will now prove \autoref{thm: degeneration to w explicit}. Besides \autoref{prop: degen with Stabw}, this requires the results on partition rank and restriction of \autoref{sec:prank-restr} and the theory on degeneration and Grassmannians from \autoref{sec:degen-grass}.

\begin{proof}[{Proof of \autoref{thm: degeneration to w explicit}}]
By \autoref{prop: good flattenings under restriction}, after possibly applying a generic restriction to $T$, we may assume $\dim V_j = 2$ for every $j$.

The proof is by induction on the order $k$ of $T$. For the base case $k=2$, the statement is clearly true. 

For the induction step, fix $k \geq 3$ and assume the result is true for tensors of order $k-1$. Let $T \in V_1 \ootimes V_k$ be a tensor of order $k$ satisfying $\pR(T)\geq 2$. By \autoref{lemma: characterization good flattening ranks}, we can write
 \[
  T = S \otimes v^{(k)}_0 + S' \otimes v^{(k)}_1 
 \]
 where $V_k = \langle v^{(k)}_0 , v^{(k)}_1 \rangle$, $S$ is a tensor of order $k-1$ satisfying $\pR(S) \geq 2$ and $S'$ is linearly independent from $S$. Define the subspace $E_T := \langle S, S' \rangle$ which is the image of the flattening map $T : V_k^* \to V_1 \ootimes V_{k-1}$.
 
The tensor $S$ has order $k-1$ and partition rank at least two, so by the induction hypothesis, we know that $S$ degenerates to $\w_{k-1}$. By definition of algebraic degeneration, there exists an element $g_\eps \in \GL(V_1^{\ext}) \ttimes \GL(V_{k-1}^{\ext})$ such that $g_\eps S = \w_{k-1} + O(\eps)$. Consider the action of $ \GL(V_1^{\ext}) \ttimes \GL(V_{k-1}^{\ext})$ on $\Gr(2,V_1^{\ext} \ootimes V_{k-1}^{\ext})$. Regarding $g_\eps \cdot E_T$ as an element of $\Lambda^2 (V_1^{\ext} \ootimes V_{k-1}^{\ext})$, we have 
\[
g_\eps \cdot E_T =  (g_\eps S) \wedge (g_\eps S') = (\w_{k-1} +O(\eps)) \wedge (g_\eps S') .
\]
We can write $g_\eps S' = \frac{1}{\eps^a} \sum_{j=0}^\infty P_j \eps^j$ for some integer $a$ and tensors $P_j \in V_1 \ootimes V_{k-1}$. Let $j_0 = \min\{ j : P_{j} \text{ is linearly independent from } \w_{k-1}\}$; in particular $P_{j_0}\neq 0$. After possibly rescaling $g_\eps$ by a power of~$\eps$, we have
\[
g_\eps \cdot E_T = \w_{k-1} \wedge P_{j_0} + O(\eps).
\]
Write $P = P_{j_0}$ and define 
\[
\hat{T} = \w_{k-1} \otimes v^{(k)}_0 + P \otimes  v^{(k)}_1.
\]
Notice $g_\eps \cdot E_T = E_{\hat{T}} + O(\eps)$  where $E_{\hat{T}} = \langle \w_{k-1} , P\rangle$ is the flattening image $\im(\hat{T} : V_k^* \to V_1 \ootimes V_k)$ of $\hat{T}$, regarded as an element of $\Gr(2, V_1 \ootimes V_k)$. We have shown that $E_T$ degenerates to the $E_{\hat{T}}$ under the action of $\GL(V_1) \ttimes \GL(V_{k-1})$ on $\Gr(2,V_1 \ootimes V_{k-1})$. Therefore, by \autoref{thm: degenerations via grassmannians}, the tensor $T$ degenerates to the tensor $\hat{T}$ under the action of $(\GL(V_1) \ttimes \GL(V_{k-1})) \times \GL(V_k)$ on $V_1 \ootimes V_k$. Thus, to prove that $T$ degenerates to $\w_k$, it is enough to show that $\hat{T}$ degenerates to $\w_{k}$. 

To prove that $\hat{T}$ degenerates to $\w_{k}$ we use \autoref{prop: degen with Stabw}. Since $P$ is linearly independent from $\w_{k-1}$, \autoref{prop: degen with Stabw} guarantees that there exists an element $h_\eps \in (\bbK^\times \times H)^{\ext}$ where $H= \Stab_{\GL(V_1) \ttimes \GL(V_{k-1})}(\w_{k-1})$, such that $ h_\eps \cdot P = v^{(1)}_0 \ootimes v^{(k-1)}_{0} +O(\eps)$. 

Now, up to rescaling $h_\eps$, we have
\[
h_\eps \cdot  E_{\hat{T}} = (h_\eps \cdot \w_{k-1}) \wedge (h_\eps \cdot P) = \w_{k-1} \wedge v^{(1)}_0 \ootimes v^{(k-1)}_{0} +O(\eps),
\]
showing $E_{\hat{T}}$ degenerates to $\langle \w_{k-1} , v^{(1)}_0 \ootimes v^{(k-1)}_{0}\rangle = \im(\w_{k} : V_k^* \to V_1 \ootimes V_{k-1})$ under the action of $H \subseteq \GL(V_1) \ttimes \GL(V_{k-1})$. Again, using \autoref{thm: degenerations via grassmannians}, we conclude $\hat{T}$ degenerates to $\w_{k}$ under the action of $\GL(V_1)\ttimes \GL(V_{k-1}) \times \GL(V_k)$. This concludes the proof.
\end{proof}

\section{A gap in the subrank of tensors}\label{sec:gap}

In this section, we prove the following theorem and we discuss how to obtain from it all results stated in \autoref{intro}, in particular the subrank gap of \autoref{th:subrank-gap}.

\begin{theorem}\label{th:gap-flattening}
For every nonzero $T \in \bbK^{n_1} \ootimes \bbK^{n_k}$, the following are equivalent:
\begin{enumerate}[\upshape(a)]
\item $T$ has no flattening of rank one;
\item $\subrank(T^{\boxtimes N}) > 1$ for some $N$;
\item $\subrank(T^{\boxtimes N}) \geq c_k^{N - o(N)}$ for all $N$, where $c_k = k /(k-1)^{(k-1)/k}$.
\end{enumerate}
\end{theorem}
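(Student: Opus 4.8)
The plan is to establish the cyclic chain of implications (a) $\Rightarrow$ (c) $\Rightarrow$ (b) $\Rightarrow$ (a), using the machinery already assembled in the paper. The heart of the argument is (a) $\Rightarrow$ (c); the other two implications are comparatively soft.

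First I would dispatch (c) $\Rightarrow$ (b): since $c_k > 1$, the bound $\subrank(T^{\boxtimes N}) \geq c_k^{N-o(N)}$ forces $\subrank(T^{\boxtimes N}) \geq 2$ for all sufficiently large $N$, which in particular gives some $N$ with $\subrank(T^{\boxtimes N}) > 1$. Next, (b) $\Rightarrow$ (a) by contraposition: if $T$ has a flattening $T_I$ of rank one, then because the rank of a flattening is supermultiplicative under $\boxtimes$ in the trivial direction — more precisely $(T^{\boxtimes N})_I = (T_I)^{\otimes N}$ as a matrix, so $\rk((T^{\boxtimes N})_I) = \rk(T_I)^N = 1$ — every power $T^{\boxtimes N}$ also has a flattening of rank one. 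But $\un_{k,r}$ has all flattening ranks equal to $r$, so $T^{\boxtimes N} \geq \un_{k,r}$ with monotonicity of flattening rank under restriction forces $r \leq 1$; hence $\subrank(T^{\boxtimes N}) = 1$ for all $N$, contradicting (b).

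The substantive implication is (a) $\Rightarrow$ (c). Here I would argue in two stages. Stage one: a flattening $T_I$ of rank one is exactly the condition that $T$ has partition rank one (this is essentially the definition of partition rank one, together with the transpose symmetry $\rk(T_I) = \rk(T_{I^c})$); so (a) says precisely $\pR(T) \geq 2$. By \autoref{thm: degeneration to w explicit}, we conclude $T \degengeq \w_k$, i.e.\ $\w_k$ lies in the orbit closure of $T$. Stage two: degeneration is asymptotically as strong as restriction for the purpose of subrank — concretely, $T \degengeq \w_k$ implies $\subrank(T^{\boxtimes N}) \geq \subrank(\w_k^{\boxtimes (N - o(N))})$, since a degeneration can be "paid for" by a sub-exponential overhead in the number of tensor factors (this is the standard argument turning an $\eps$-parametrized curve into an honest restriction after tensoring with a generic auxiliary tensor and absorbing the lower-order error terms; it underlies Strassen's asymptotic spectrum theory). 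Combining this with the known value $\subrank(\w_k^{\boxtimes N}) = c_k^{N - o(N)}$ from \cite{strassen1991degeneration} yields $\subrank(T^{\boxtimes N}) \geq c_k^{N - o(N)}$, which is (c). One small point of care: the statements in the introduction fix $\bbK$ algebraically closed when discussing degeneration, whereas Theorem \ref{th:gap-flattening} is over an arbitrary field; I would handle this by first passing to the algebraic closure $\bar\bbK$ (subrank can only go up, and having no rank-one flattening is preserved), proving the bound there, and observing that the desired conclusion for $T$ over $\bbK$ follows — or, alternatively, by invoking the fact that subrank is unchanged under field extension for unit tensors so the relevant inequality descends.

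The main obstacle I anticipate is making the "degeneration implies asymptotic restriction" step fully rigorous with the right $o(N)$ bookkeeping: one must show that if $g_\eps \cdot T = \w_k + O(\eps)$ with entries polynomial of degree at most $d$ in $\eps$, then $T^{\boxtimes N}$ restricts to $\w_k^{\boxtimes N} \boxtimes \un_{k, \binom{N d + 1}{1}}^{\text{(suitable)}}$ or similar, so that the polynomial-in-$N$ overhead is absorbed into the $o(N)$ in the exponent. This is classical (it is the degeneration-to-restriction lemma, cf.\ Strassen and the Bürgisser–Clausen–Shokrollahi treatment), but it is the one place where the argument is not a one-line citation; everything else is assembly of results already proved in the preceding sections.
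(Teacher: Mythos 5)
Your proposal follows essentially the same route as the paper: (a) $\Rightarrow$ (c) via "no rank-one flattening $\Leftrightarrow$ $\pR(T)\geq 2$", then \autoref{thm: degeneration to w explicit} to get $T \degengeq \w_k$, the known value $\subrank(\w_k^{\boxtimes N}) = c_k^{N-o(N)}$, and asymptotic monotonicity of subrank under degeneration, with the field issue handled by invariance under field extension; (c) $\Rightarrow$ (b) and (b) $\Rightarrow$ (a) exactly as in the paper. The only difference is that the paper simply cites Strassen for the degeneration-to-restriction step (\autoref{lem:degen-asymp-subrank}) and for field extension (\autoref{lem:field-ext}), whereas you sketch these arguments; your proof is correct.
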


Notice that \autoref{thm: degeneration to w explicit} immediately implies \autoref{thm: degeneration to w}.
To prove \autoref{th:gap-flattening} we need three additional lemmas.
The first lemma says that the asymptotic subrank does not change under field extension; in particular, statements on the asymptotic subrank over algebraically closed fields extend to arbitrary fields.
\begin{lemma}[{\cite[Thm.~3.10]{strassen1988asymptotic}}]\label{lem:field-ext}
    Let $\bbK$ be any field and let $\bbL$ be any field that extends $\bbK$. Let $T \in \bbK^{n_1} \otimes \cdots \otimes \bbK^{n_k}$ be any tensor over $\bbK$. Let $T_{\bbL} \in \bbL^{n_1} \otimes \cdots \otimes \bbL^{n_k}$ be $T$ as a tensor over $\bbL$. Then $\asympsubrank(T) = \asympsubrank(T_\bbL)$. In other words, $\subrank(T^{\boxtimes N}) \geq c^{N - o(N)}$ if and only if $\subrank(T_\bbL^{\boxtimes N}) \geq c^{N - o(N)}$.
\end{lemma}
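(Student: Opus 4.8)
The plan is to establish the two inequalities $\aQ(T) \le \aQ(T_\bbL)$ and $\aQ(T_\bbL) \le \aQ(T)$ separately. The first is immediate: a tuple of $\bbK$-linear maps realizing $T^{\boxtimes N} \ge \un_{k,r}$ over $\bbK$ also realizes $T_\bbL^{\boxtimes N} \ge \un_{k,r}$ over $\bbL$, so $\subrank(T^{\boxtimes N}) \le \subrank(T_\bbL^{\boxtimes N})$ for every $N$, hence $\aQ(T) \le \aQ(T_\bbL)$. For the reverse inequality, the first step is a reduction to the case where $\bbL/\bbK$ is a \emph{finite} extension: any tuple of linear maps witnessing $\subrank(T_\bbL^{\boxtimes N}) \ge r$ uses only finitely many scalars of $\bbL$, hence is defined over a finitely generated subextension $\bbL_0$, and $\bbL_0$ is a finite extension of a purely transcendental extension $\bbK(x_1,\ldots,x_m)$; the transcendental layer is harmless, since after clearing denominators in the witnessing maps one may specialize $x_1,\ldots,x_m$ to a common non-vanishing point of the denominators, which exists in $\bbK$ when $\bbK$ is infinite and in a finite extension of $\bbK$ otherwise. (Alternatively, pass to algebraic closures and invoke Hilbert's Nullstellensatz, by which $\subrank(T_\Omega^{\boxtimes N})$ is the same for every algebraically closed field $\Omega \supseteq \bbK$, being the largest $r$ for which a fixed affine $\bbK$-variety is nonempty.) So it remains to prove $\aQ(T_\bbL) \le \aQ(T)$ when $[\bbL:\bbK] = d < \infty$.

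Assuming $[\bbL:\bbK] = d < \infty$, I would apply restriction of scalars at the level of the witnessing maps. Fix a $\bbK$-basis $1 = \omega_1,\ldots,\omega_d$ of $\bbL$, with structure constants $c^b_{a_1\ldots a_k} \in \bbK$ determined by $\omega_{a_1}\cdots\omega_{a_k} = \sum_{b=1}^d c^b_{a_1\ldots a_k}\,\omega_b$. Given $\bbL$-linear maps $\pi_i$ with $(\pi_1 \ootimes \pi_k)(T^{\boxtimes N}) = \un_{k,s}$, where $s = \subrank(T_\bbL^{\boxtimes N})$, decompose each $\pi_i = \sum_{a=1}^d \omega_a\, \pi_i^{[a]}$ into $\bbK$-linear components $\pi_i^{[a]}$. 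Since $T^{\boxtimes N}$ and $\un_{k,s}$ have entries in $\bbK$, expanding the witnessing identity in the basis $\omega_1,\ldots,\omega_d$ of $\bbL$ and extracting the coefficient of $\omega_1 = 1$ gives the identity over $\bbK$
\[
\un_{k,s} \;=\; \sum_{a_1,\ldots,a_k=1}^{d} c^1_{a_1\ldots a_k}\,\bigl(\pi_1^{[a_1]} \ootimes \pi_k^{[a_k]}\bigr)(T^{\boxtimes N}),
\]
which exhibits $\un_{k,s}$ as a $\bbK$-linear combination of at most $d^k$ restrictions of $T^{\boxtimes N}$. A linear combination of $J$ restrictions of a tensor $A$ is itself a restriction of $A^{\oplus J} = \un_{k,J} \boxtimes A$ (take the map acting as the $j$-th restriction on the $j$-th summand), so this yields $\un_{k,d^k} \boxtimes T^{\boxtimes N} \ge \un_{k,s}$ over $\bbK$, that is, $\subrank(\un_{k,d^k} \boxtimes T^{\boxtimes N}) \ge \subrank(T_\bbL^{\boxtimes N})$ for every $N$.

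To conclude, I would pass to the asymptotic regime using the identity $\aQ(\un_{k,m} \boxtimes X) = m \cdot \aQ(X)$, valid for every tensor $X$ — a property of the asymptotic subrank that follows from Strassen's description of it as the minimum over the asymptotic spectrum \cite{strassen1988asymptotic} (each spectral point being additive under $\oplus$ and multiplicative under $\boxtimes$) — together with $\aQ(T^{\boxtimes N}) = \aQ(T)^N$. These give, for every $N$,
\[
\subrank(T_\bbL^{\boxtimes N}) \;\le\; \subrank\bigl(\un_{k,d^k} \boxtimes T^{\boxtimes N}\bigr) \;\le\; \aQ\bigl(\un_{k,d^k} \boxtimes T^{\boxtimes N}\bigr) \;=\; d^k\, \aQ(T)^N,
\]
and taking $N$-th roots and letting $N \to \infty$ (so $(d^k)^{1/N} \to 1$ and $\subrank(T_\bbL^{\boxtimes N})^{1/N} \to \aQ(T_\bbL)$) yields $\aQ(T_\bbL) \le \aQ(T)$. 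I expect this last step to be the main obstacle: converting the inequality for $\subrank$ of a fixed-size direct sum of copies of $T^{\boxtimes N}$ into a bound on $\aQ(T)$ requires knowing that direct-summing a bounded number of copies does not change the exponential growth rate of the subrank, i.e.\ the identity $\aQ(X^{\oplus m}) = m\,\aQ(X)$; the crude estimate $\subrank(\un_{k,d^k} \boxtimes T^{\boxtimes N}) \le d^k \cdot (\min_p \rk(T_p))^N$ is too weak, recovering only $\aQ(T_\bbL) \le \min_p \rk(T_p)$.
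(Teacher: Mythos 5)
The paper does not prove this lemma itself (it is quoted from Strassen), so your attempt has to stand on its own. Most of its ingredients are sound: the easy inequality $\aQ(T)\le\aQ(T_\bbL)$, the restriction-of-scalars identity extracting the coefficient of $\omega_1$, the packaging of a sum of $d^k$ restrictions as a restriction of $\un_{k,d^k}\boxtimes T^{\boxtimes N}$, and the removal of a \emph{fixed} unit-tensor factor via $\aQ(\un_{k,m}\boxtimes X)=m\,\aQ(X)$ (legitimate, if heavy, granted Strassen's spectral duality over the relevant field). The genuine gap is the opening reduction ``it remains to prove the case $[\bbL:\bbK]=d<\infty$''. This is not a valid reduction: take $\bbK=\bbQ$ or $\bbK=\bbF_p$ and $\bbL=\bar{\bbK}$, which is exactly the situation in which the paper invokes the lemma. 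For each $N$ the witnesses of $\subrank(T_\bbL^{\boxtimes N})$ do lie in a finitely generated (indeed finite) subextension, but its degree $d(N)$ depends on $N$ and is a priori unbounded; your final chain then only yields $\subrank(T_\bbL^{\boxtimes N})\le d(N)^k\,\aQ(T)^N$, and the step ``$(d^k)^{1/N}\to1$'' is unjustified because $d$ is not a constant. The same quantifier problem resurfaces in your treatment of the transcendental layer over a finite base field: the degree of the extension needed to find a non-vanishing specialization point grows with the degrees of the denominators, hence with $N$. (A smaller slip: the witnessing entries are algebraic over $\bbK(x_1,\ldots,x_m)$, so you cannot literally specialize the $x_i$ ``in the witnessing maps''; one must first restrict scalars down to $\bbK(x_1,\ldots,x_m)$ and only then specialize.)

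The missing idea is to decouple the two limits so that the degree loss is fixed before any $N\to\infty$ is taken. Fix a single exponent $N_0$ and set $r_0=\subrank(T_\bbL^{\boxtimes N_0})$. By your own Nullstellensatz remark you may replace $\bbL$ by $\bar{\bbK}$, and then the witnesses for this one power lie in a finite extension $\bbK_0/\bbK$ of some fixed degree $d_0$. Taking $M$-th Kronecker powers of this single witness \emph{inside $\bbK_0$} gives $\subrank_{\bbK_0}(T^{\boxtimes N_0M})\ge r_0^M$ for all $M$; now your restriction-of-scalars step applies with the constant loss $d_0^k$, and your spectrum identity gives $r_0^M\le d_0^k\,\aQ(T)^{N_0M}$, hence $r_0^{1/N_0}\le\aQ(T)$ after letting $M\to\infty$. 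Finally take the supremum over $N_0$, using that $\aQ(T_\bbL)=\sup_{N_0}\subrank(T_\bbL^{\boxtimes N_0})^{1/N_0}$ by Fekete. With this re-ordering of quantifiers your ingredients do assemble into a complete proof; as written, however, the reduction to a finite extension of bounded degree is a real gap, not a routine omission.
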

The second lemma describes the growth of the subrank of powers of the tensor $\w_k$, which is where the constant $c_k = 2^{h(1/k)}$ originates.
\begin{lemma}[{\cite{strassen1991degeneration,ChrVraZui:AsyRankGraph}}]\label{lem:subrank-w}
For every $k\geq 2$ we have $\subrank(\w_k^{\boxtimes N}) = c_k^{N - o(N)}$.
\end{lemma}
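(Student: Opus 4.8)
I would first reduce the lemma to the single equality $\aQ(\w_k)=c_k$ for the asymptotic subrank $\aQ(\w_k)=\lim_N\subrank(\w_k^{\boxtimes N})^{1/N}$. Since subrank is supermultiplicative, $\subrank(\w_k^{\boxtimes N})^{1/N}$ increases to $\aQ(\w_k)$ while staying below it, so the bound $\subrank(\w_k^{\boxtimes N})\le c_k^{N}$ is automatic once $\aQ(\w_k)=c_k$ is known, and the two-sided estimate $c_k^{N-o(N)}$ is then precisely $\aQ(\w_k)=c_k$. By \autoref{lem:field-ext} the asymptotic subrank is unchanged under field extension, so for the upper bound I may assume $\bbK=\bbC$. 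I would work throughout with the combinatorial model obtained by identifying $\bbK^{2^N}$ with $(\bbK^2)^{\otimes N}$ and its basis with $\{0,1\}^N$: then $\supp(\w_k^{\boxtimes N})$ is in bijection with the set of ordered $k$-partitions $(A_1,\dots,A_k)$ of $[N]$ (equivalently, functions $[N]\to[k]$), and all coefficients of $\w_k^{\boxtimes N}$ on this support equal $1$.

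For the lower bound $\aQ(\w_k)\ge c_k$ I would exhibit, for each $N$, a restriction $\w_k^{\boxtimes N}\ge\un_{k,r}$ with $r=c_k^{N-o(N)}$; combinatorial restrictions are valid over any field, so this also covers positive characteristic. Such a restriction comes from a \emph{degeneracy-free} family $\mathcal{D}$ of ordered $k$-partitions of $[N]$ --- one in which (i) for each direction $j$ the $j$-th parts are pairwise distinct across $\mathcal{D}$, and (ii) if parts $A_1^{(d_1)},\dots,A_k^{(d_k)}$ drawn from members $d_1,\dots,d_k$ of $\mathcal{D}$ again partition $[N]$, then $d_1=\dots=d_k$ --- via the maps $\pi_j$ that send the basis vector indexed by $A_j^{(d)}$ to $e_d$ and every other basis vector to $0$, for which $(\pi_1\ootimes\pi_k)\w_k^{\boxtimes N}=\un_{k,|\mathcal{D}|}$. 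To build $\mathcal{D}$ of the required size I would use the standard construction of a large combinatorial diagonal inside a tensor power (typical sequences concentrated on near-balanced partitions, refined by a Salem--Spencer/Behrend-type progression-free set to kill the spurious partitions in (ii) at the cost of only a $2^{o(N)}$ factor), exactly as in the base cases of the laser method; for $\w_k$ this yields a degeneracy-free family of size $2^{h(1/k)N-o(N)}=c_k^{N-o(N)}$.

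For the upper bound $\aQ(\w_k)\le c_k$ I would, over $\bbC$, use that Strassen's support functionals $\zeta^\theta$ are monotone under restriction, multiplicative under $\boxtimes$, and normalized ($\zeta^\theta(\un_{k,r})=r$), whence $\aQ(T)\le\zeta^\theta(T)$ for every $\theta$ in the standard simplex. Evaluating at $\theta=(\tfrac1k,\dots,\tfrac1k)$ gives $\zeta^\theta(\w_k)=2^{\mu}$ with $\mu=\max_P\tfrac1k\sum_{j=1}^k H(P_j)$ over probability distributions $P$ on the $k$-element support of $\w_k$; since the $j$-th marginal of such a $P$ is the two-point distribution with weight $p_j$ on the special slice in direction $j$, with $p_j\ge0$ and $\sum_j p_j=1$, concavity of $h$ and a Lagrange-multiplier computation place the maximum at $p_j=\tfrac1k$, so $\mu=h(1/k)$ and $\zeta^\theta(\w_k)=2^{h(1/k)}=c_k$. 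For a general field I would instead use $\subrank(\w_k^{\boxtimes N})\le\sR(\w_k^{\boxtimes N})$ (slice rank is restriction-monotone and equals $r$ on $\un_{k,r}$) and bound the slice rank of $\w_k^{\boxtimes N}$ directly by the Croot--Lev--Pach polynomial method, which gives $c_k^{N+o(N)}$ uniformly in the characteristic; together with the lower bound this yields $\aQ(\w_k)=c_k$ over every field.

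The main obstacle is the lower bound, and inside it the exact exponent. Producing \emph{some} exponentially large degeneracy-free family is routine; the delicate point is that the achievable rate is governed by an entropy optimization over distributions on the $k$-element support of $\w_k$ whose value is exactly $h(1/k)$ (the same quantity that controls the support-functional bound), and that enforcing condition (ii) erodes only a subexponential factor, so the surviving family keeps base $c_k=2^{h(1/k)}$ rather than anything smaller. Supplying a progression-free set of the right density (Salem--Spencer/Behrend) is precisely what makes that erosion negligible, and getting this interplay right is the crux of the argument.
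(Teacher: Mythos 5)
The paper does not actually prove this lemma: it is imported as a black box, with the $k=3$ case attributed to Strassen's 1991 work on tight tensors and the general-$k$ case to Christandl--Vrana--Zuiddam. Your sketch is, in outline, a correct reconstruction of the proofs in that cited literature, so it is best viewed as ``same approach as the sources the paper relies on'': the lower bound via a monomial (combinatorial) restriction coming from a degeneracy-free family of ordered $k$-partitions, built from near-balanced typical supports trimmed by a progression-free set, and the upper bound via a normalized restriction-monotone functional evaluated at $\w_k$. Two caveats are worth recording. First, for $k\geq 4$ the plain Salem--Spencer ingredient is not literally enough: the spurious partitions in your condition (ii) correspond to nontrivial solutions of a $k$-variable invariant linear equation, so you need Behrend-type sets free of such solutions (these exist with only $2^{o(N)}$ loss, but this generalization is exactly the nontrivial content of the general-$k$ case and should be stated, not waved at as ``the base case of the laser method''). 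Second, Strassen's support functionals $\zeta^\theta$ are defined for order-three tensors; for $k\geq 4$ you should either invoke the quantum functionals over $\bbC$ or, better, just keep the slice-rank covering bound you also give (each support element has some part of size at most $N/k$, giving $\sR(\w_k^{\boxtimes N})\leq k\,2^{h(1/k)N}$), which is elementary, works over every field, and makes the support-functional paragraph redundant. With those adjustments your argument is complete and matches the known proof strategy; your entropy computation $\max\frac1k\sum_j h(p_j)=h(1/k)$ and the identity $2^{h(1/k)}=c_k$ are correct, as is the reduction of the two-sided asymptotic statement to $\aQ(\w_k)=c_k$ via supermultiplicativity and Fekete's lemma.
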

The third lemma guarantees that the subrank is monotone under degeneration in an asymptotic sense.
\begin{lemma}[\cite{strassen1988asymptotic}]\label{lem:degen-asymp-subrank}
For any tensors $T$ and $S$, if $T \degengeq S$ and $\subrank(S^{\boxtimes N}) \geq d^{N - o(N)}$ for some constant $d$, then $\subrank(T^{\boxtimes N}) \geq d^{N - o(N)}$.
\end{lemma}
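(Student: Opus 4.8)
The plan is to establish the equivalent statement that the \emph{asymptotic} subrank $\asympsubrank(U)=\lim_N \subrank(U^{\boxtimes N})^{1/N}$ is monotone under degeneration, i.e.\ $T\degengeq S\Rightarrow\asympsubrank(T)\ge\asympsubrank(S)$. This is equivalent to the lemma because, the limit existing by Fekete and super-multiplicativity of the subrank, the hypothesis ``$\subrank(U^{\boxtimes N})\ge d^{N-o(N)}$'' says precisely ``$\asympsubrank(U)\ge d$''. By \autoref{lem:field-ext} we may assume $\bbK$ algebraically closed, in particular infinite. First I would record that degeneration is compatible with Kronecker powers: if $g_\eps\cdot T=S+O(\eps)$ is an algebraic degeneration as in \autoref{def: algebraic degeneration}, then $g_\eps^{\boxtimes N}\cdot T^{\boxtimes N}=S^{\boxtimes N}+O(\eps)$, so $T^{\boxtimes N}\degengeq S^{\boxtimes N}$ for every $N$.

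The heart of the argument is the classical principle that a bounded-degree degeneration yields an honest restriction at the cost of a polynomial blow-up. After clearing denominators one may take $g_\eps$ to have polynomial entries in $\eps$, so that $g_\eps^{\boxtimes N}\cdot T^{\boxtimes N}=\sum_{j=0}^{cN}\eps^{j}W_j$ for a constant $c$ depending only on the chosen degeneration, with $W_0=S^{\boxtimes N}$. Picking $cN+1$ distinct scalars $\eps_0,\dots,\eps_{cN}\in\bbK$ at which every $g_{\eps_a}$ is invertible (possible since $\bbK$ is infinite) and inverting the Vandermonde matrix $(\eps_a^{\,j})_{a,j}$, one obtains $S^{\boxtimes N}=\sum_a\lambda_a\,(g_{\eps_a}\cdot T^{\boxtimes N})$ with $\lambda_a\in\bbK$, a $\bbK$-linear combination of $cN+1$ copies of $T^{\boxtimes N}$ living in a common space. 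Since $\un_{k,cN+1}\boxtimes T^{\boxtimes N}$ is, up to relabelling supports, the external direct sum of $cN+1$ copies of $T^{\boxtimes N}$, and a direct sum restricts (by an isomorphism in each summand followed by a linear fold map) onto any such linear combination, this gives
\[
 S^{\boxtimes N}\ \le\ \un_{k,\,cN+1}\boxtimes T^{\boxtimes N}\qquad\text{for every }N .
\]

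It remains to discard the unit-tensor factor, and this is the only delicate point: $\subrank(S^{\boxtimes N})\le\subrank(\un_{k,cN+1}\boxtimes T^{\boxtimes N})$, and one must see that Kronecker-multiplying by a unit tensor of rank polynomial in $N$ does not change the exponential rate — which is not automatic, because the subrank is merely \emph{super}-multiplicative, so a Kronecker factor cannot simply be cancelled. If $\asympsubrank(T)>1$ one argues directly: then $\subrank(T^{\boxtimes\ell})\ge cN+1$ for some $\ell=O(\log N)$, hence $\un_{k,cN+1}\le T^{\boxtimes\ell}$ and therefore $S^{\boxtimes N}\le T^{\boxtimes(N+\ell)}$; since $\subrank(T^{\boxtimes M})$ is non-decreasing in $M$ (as $T\ge\un_{k,1}$) and $N+\ell=N(1+o(1))$, this yields $\subrank(T^{\boxtimes M})\ge d^{M-o(M)}$. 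The remaining possibility $\asympsubrank(T)=1$ is dispatched by the classical flattening criterion (a tensor of asymptotic subrank one must have a flattening of rank one): then semicontinuity of matrix rank along $T\degengeq S$ forces $S$ to have a flattening of rank $\le 1$, so $\subrank(S^{\boxtimes N})\le 1$ for all $N$, whence $d\le 1$ and the conclusion is vacuous.

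Thus the overall structure is short, and the single real obstacle is the overhead-removal step above. I expect to present it essentially as recalled here, noting that in every application of this lemma in the present paper one applies it to a tensor $T$ with no flattening of rank one (so, by the same flattening criterion, $\asympsubrank(T)>1$ is known from the outset), and only the first case is needed.
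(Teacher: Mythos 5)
The paper does not actually prove \autoref{lem:degen-asymp-subrank}; it imports it from Strassen \cite{strassen1988asymptotic}, where it sits inside the machinery of asymptotic restriction and the asymptotic spectrum. So there is no in-paper proof to compare against, and your self-contained argument is welcome: the Vandermonde interpolation step (a degeneration of bounded error degree, raised to the $N$-th Kronecker power, gives $S^{\boxtimes N}\le \un_{k,\,O(N)}\boxtimes T^{\boxtimes N}$), the observation that $\un_{k,m}\boxtimes T^{\boxtimes N}$ is a direct sum of copies of $T^{\boxtimes N}$ which restricts onto any linear combination of restrictions, and the absorption of the polynomial-size unit factor into $T^{\boxtimes O(\log N)}$ when $\asympsubrank(T)>1$ are all correct and standard. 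Two small repairs to the interpolation step: after clearing denominators you cannot in general arrange the constant term of $g_\eps^{\boxtimes N}\cdot T^{\boxtimes N}$ to be $S^{\boxtimes N}$; rather $g_\eps\cdot T=\eps^{a}S+O(\eps^{a+1})$ with polynomial entries, and the Vandermonde coefficients should extract the coefficient of $\eps^{aN}$ (this changes nothing else); also invertibility of the specialized maps $g_{\eps_a}$ is irrelevant, since a restriction does not require invertible maps — you only need the $\eps_a$ distinct (and $\bbK$ infinite, which \autoref{lem:field-ext} provides, noting also that a degeneration over $\bbK$ remains one over any extension).

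The one point you should treat more carefully is the case $\asympsubrank(T)=1$. The statement you invoke there — ``asymptotic subrank one forces a flattening of rank one'' — is, inside this paper, the contrapositive of (part of) \autoref{th:gap-flattening}, whose proof uses \autoref{lem:degen-asymp-subrank}; citing it in that form would be circular. You must instead cite the independent earlier result \cite[Lemma~3.7]{strassen1988asymptotic} (no rank-one flattening implies $\asympsubrank(T)\ge 2^{2/k}>1$), as the paper itself does in its related-work discussion, and you should note explicitly that this external lemma is proved by a direct construction not relying on degeneration-monotonicity of the asymptotic subrank, since the lemma you are proving is attributed to the same source. Your closing remark — that in every application within this paper the tensor $T$ has no rank-one flattening, so only the absorption case is ever used — is accurate, but again the passage from ``no rank-one flattening'' to ``$\asympsubrank(T)>1$'' needs \cite[Lemma~3.7]{strassen1988asymptotic} (or some independent lower bound such as exhibiting a single power with subrank at least two), not the paper's own gap theorem. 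With these citations straightened out, the proof is complete and is a more elementary route than Strassen's spectrum-based treatment.
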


\begin{proof}[Proof of \autoref{th:gap-flattening}]
(a) $\Rightarrow$ (c). By \autoref{lem:field-ext} we may assume that $\bbK$ is algebraically closed. If $T$ has no flattening of rank one, then \autoref{thm: degeneration to w} guarantees that $T \degengeq \w_k$. From \autoref{lem:subrank-w} and \autoref{lem:degen-asymp-subrank}, we conclude $\subrank(T^{\boxtimes N}) \geq c_k^{N - o(N)}$. 

(c) $\Rightarrow$ (b). This is clear.

(b) $\Rightarrow$ (a). If $T$ has a flattening of rank one, then $\subrank(T^{\boxtimes N}) \leq 1$ for all $N$. The contrapositive is the implication we are looking for.
\end{proof}

We now prove the results in 
\autoref{intro} as a consequence of \autoref{th:gap-flattening}.

\thmsubrankgap*
\begin{proof}
If $T$ has a flattening of rank one, then by \autoref{th:gap-flattening} $\subrank(T^{\boxtimes N}) \leq 1$ for all $N$. Since $T$ is nonzero we must have $\subrank(T^{\boxtimes N}) = 1$ for all $N$.
If $T$ has no flattening of rank one, then \autoref{th:gap-flattening} guarantees $\subrank(T^{\boxtimes N}) \geq c_k^{N - o(N)}$ for all $N$.
\end{proof}

When $k=2$, the bound of \autoref{th:subrank-gap} already holds non-asymptotically. Indeed, for $k=2$ the tensor $T$ is a matrix and the subrank $\subrank(T)$ is the matrix rank of $T$. In this case $\w_2$ is isomorphic to $\un_{2,2}$. Clearly, the rank of any matrix is at most 1 or at least $c_2 = 2$.

We will now discuss the short proofs of \autoref{th:pr-gap} and \autoref{th:f-gap}.

\thmprgap*
\begin{proof}
If $\pR(T) = 1$ then $\pR(T^{\boxtimes N}) = 1$ for every $N$. 
If $\pR(T) \geq 2$, then \autoref{th:subrank-gap} guarantees $\rmQ(T^{\boxtimes N}) \geq c_k^{ N - o(N)}$. By \autoref{lem:subrank} the subrank is a lower bound to the partition rank, so we conclude $\pR(T^{\boxtimes N}) \geq c_k^{ N - o(N)}$. 
\end{proof}

\thmgeneralgap*
\begin{proof}
The assumption is that $T$ has no flattening of rank one, therefore $\rmQ(T^{\boxtimes N}) \geq c_k^{N - o(N)}$ for every $N$. Write explicitly $\rmQ(T^{\boxtimes N}) \geq r_N = c_k^{N - \alpha_N}$ where $\frac{1}{N}\alpha_N \to 0$ as $N \to \infty$. We may assume that the $r_N$ are integers. By definition of subrank, we have $T^{\boxtimes N} \geq \un_{k,r_N}$. Therefore, for every normalized monotone, we obtain $f(T^{\boxtimes N}) \geq f( \un_{k,r_N}) = r_N = c^{N-\alpha_N}$. We conclude $f(T^{\boxtimes N}) \geq c_k^{N - o(N)}$ as desired.
\end{proof}

Finally, we provide an explicit proof of \autoref{th:asysubrank}.
\thmasymsubrank*
\begin{proof}
 By definition, $\aQ(T) = \lim_{N \to \infty} \rmQ(T^{\boxtimes N})^{1/N}$. If $\rmQ(T^{\boxtimes N}) = 1$ for every $N$, then clearly $\aQ(T) = 1$. Otherwise, applying \autoref{th:subrank-gap}, we get that $\aQ(T)  = \lim_{N \to \infty} \rmQ(T^{\boxtimes N})^{1/N} \geq \lim_{N\to \infty} (c_k^{N - o(N)})^{1/N} = c_k$ as desired.
\end{proof}

\section{A second gap in the subrank of tensors of order three}\label{sec:orderthree}

\autoref{th:subrank-gap} is a result providing a gap in the subrank of large Kronecker powers for tensors of arbitrary order. In this section we will prove a ``second gap'' for the special case of tensors $T \in \bbK^{n_1} \otimes \bbK^{n_2} \otimes \bbK^{n_3}$ of order three ($k=3$). 

\thmasympsubtrich*

To prove \autoref{th:asympsubtrich} we prove the following structural result, which strengthens \autoref{thm: degeneration to w} for the special case of tensors of order three.

\thmorderthreecomplex*

The proof of \autoref{thm: subrank 2 for 3tensors} uses the orbit classification of tensors in $\bbK^2 \otimes \bbK^2 \otimes \bbK^2$ under the action of $\GL(\bbK^2) \times \GL(\bbK^2) \times \GL(\bbK^2)$. This orbit classification is as follows:

\begin{lemma}\label{lem:classification}
For every $T \in \bbK^2 \otimes \bbK^2 \otimes \bbK^2$ exactly one of the following statements holds:
\begin{enumerate}[\upshape(a)]
    \item $T = 0$;
    \item $T \cong e_1 \otimes e_1 \otimes e_1$;
    \item $T \cong e_1 \otimes e_1 \otimes e_1 +  e_1 \otimes e_2 \otimes e_2$;
    \item $T \cong e_1 \otimes e_1 \otimes e_1 +  e_2 \otimes e_1 \otimes e_2$;
    \item $T \cong e_1 \otimes e_1 \otimes e_1 +  e_2 \otimes e_2 \otimes e_1$;
    \item $T \cong \w_3 = e_1 \otimes e_1 \otimes e_2 + e_1 \otimes e_2 \otimes e_1 + e_2 \otimes e_1 \otimes e_1$;
    \item $T \cong \un_{3,2} = e_1 \otimes e_1 \otimes e_1 + e_2 \otimes e_2 \otimes e_2$.
\end{enumerate}
\end{lemma}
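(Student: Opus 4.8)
The plan is to work over the algebraically closed field $\bbK$ fixed in \autoref{sec:basic} and to split the argument according to the ranks of the three flattenings $T_1,T_2,T_3$ of $T$ (each of which lies in $\{0,1,2\}$), reconstructing $T$ from the $2$-dimensional subspace $E_T=\im(T_3)\subseteq V_1\otimes V_2$ spanned by its slices in the third direction whenever $T_3$ is injective.

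First I would dispose of the tensors with a flattening of small rank. If $T=0$ we are in case (a). If some flattening has rank one, say (after reordering the factors) $\rk(T_p)=1$, then $T$ lies in $v\otimes(\textbigotimes_{j\neq p}V_j)$ for a nonzero $v\in V_p$, so $T=v\otimes M$ for a nonzero $2\times 2$ matrix $M$ over $\bbK$; normalizing $v$ to $e_1$ and $M$ to its rank normal form gives $T\cong e_1\otimes e_1\otimes e_1$ when $\rk M=1$ (case (b)), and $T\cong e_1\otimes(e_1\otimes e_1+e_2\otimes e_2)$ when $\rk M=2$, which is case (c), (d) or (e) according to whether $p=1,2$ or $3$. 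From now on all three flattenings have rank exactly $2$ (they cannot be zero since $T\neq 0$, and rank one was just handled).

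Now $T_3:V_3^*\to V_1\otimes V_2$ is injective with image a $2$-plane $E:=E_T$; since $T_3$ is injective, the $\GL(V_3)$-action exactly amounts to changing the ordered basis of $E$ given by the slices, so the $(\GL(V_1)\times\GL(V_2)\times\GL(V_3))$-orbit of $T$ is determined by the $(\GL(V_1)\times\GL(V_2))$-orbit of $E$ in $\Gr(2,V_1\otimes V_2)$. Moreover $E$ is contained in neither $v_1\otimes V_2$ nor $V_1\otimes v_2$, as otherwise $T_1$ (resp. $T_2$) would have rank one. I would then study the binary quadratic form $q=\det|_E$ on $E$. It cannot vanish identically: if it did, every matrix in $E$ would be singular, and \autoref{lemma: rank one subspaces} would force $E$ into $v_1\otimes V_2$ or $V_1\otimes v_2$, contrary to the above. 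Over $\bbK$ a nonzero binary quadratic form is either a product of two distinct linear forms or a nonzero square, and these two possibilities give cases (g) and (f) respectively:
\begin{itemize}
\item If $q=\ell_1\ell_2$ with $\ell_1,\ell_2$ non-proportional, then $E$ contains exactly two lines of singular matrices, spanned by linearly independent rank-one matrices $A=u_1\otimes w_1$ and $B=u_2\otimes w_2$. If $u_1,u_2$ were proportional then $E\subseteq u_1\otimes V_2$ and $q\equiv 0$; hence $u_1,u_2$ form a basis of $V_1$, and similarly $w_1,w_2$ a basis of $V_2$. Applying $\GL(V_1)\times\GL(V_2)$ sending $u_i\mapsto e_i$ and $w_i\mapsto e_i$ turns $E$ into $\langle e_1\otimes e_1,\ e_2\otimes e_2\rangle$, which reconstructs $e_1\otimes e_1\otimes e_1+e_2\otimes e_2\otimes e_2=\un_{3,2}$ (case (g)).
\item If $q=\ell^2$ with $\ell\neq 0$, then $E$ contains a unique line of singular matrices, spanned by a rank-one matrix $A$; normalize $A=e_1\otimes e_1$ and pick $B=(b_{ij})$ completing a basis of $E$. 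A direct computation gives $\det(xA+yB)=b_{22}\,xy+\det(B)\,y^2$, which is a nonzero square exactly when $b_{22}=0$ and $\det B=-b_{12}b_{21}\neq 0$. Replacing $B$ by $B-b_{11}A\in E$ and rescaling rows and columns (which keeps $A$ in the line $\langle e_1\otimes e_1\rangle$) turns $B$ into $e_1\otimes e_2+e_2\otimes e_1$, so $E\cong\langle e_1\otimes e_1,\ e_1\otimes e_2+e_2\otimes e_1\rangle$; reconstructing and relabelling the third factor yields $\w_3$ (case (f)).
\end{itemize}

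Finally, for the word ``exactly'' I would observe that cases (a)--(e) are pairwise distinguished by the ordered triple of flattening ranks $(0,0,0),(1,1,1),(1,2,2),(2,1,2),(2,2,1)$, while cases (f) and (g) both have all flattening ranks equal to $2$ but are separated by the invariant above, namely whether $\det|_{E_T}$ is a nonzero square ($\w_3$) or has two distinct roots ($\un_{3,2}$). The step I expect to be the real work is the case where all flattenings have rank two: making the reduction to the $2$-plane $E\subseteq V_1\otimes V_2$ clean, and carrying out the square-versus-squarefree dichotomy of $\det|_E$ together with the explicit normalization that produces $\w_3$; the rest is bookkeeping, but it is exactly this bookkeeping that makes the seven orbits come out in the stated form.
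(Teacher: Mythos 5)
Your proof is correct, and it takes a different route from the paper: the paper does not prove \autoref{lem:classification} from scratch at all, but instead cites Sylvester's classification over $\bbC$ and, for general fields, Kronecker's theory of matrix pencils \cite[Ch.~XII]{Gant:TheoryOfMatrices}. What you have written is in effect a self-contained, characteristic-free derivation of exactly the special case of that pencil classification which is needed: after disposing of the tensors with a flattening of rank at most one (where your bookkeeping of the rank triples $(0,0,0),(1,1,1),(1,2,2),(2,1,2),(2,2,1)$ is the right way to see that cases (a)--(e) are mutually exclusive), you identify concise tensors with $(\GL(V_1)\times\GL(V_2))$-orbits of $2$-planes $E\subseteq V_1\otimes V_2$, rule out identically singular planes via \autoref{lemma: rank one subspaces}, and split according to whether the binary quadratic form $\det|_E$ is a product of two non-proportional linear forms or a nonzero square; the explicit normalizations producing $\un_{3,2}$ and $\w_3$, and the computation $\det(xA+yB)=b_{22}xy+\det(B)y^2$, are correct. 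Compared with the paper's citation, your argument buys self-containedness, validity in all characteristics, and an explicit invariant separating (f) from (g) (the discriminant type of $\det|_{E_T}$, which is preserved since the group rescales $\det$ by $\det(g_1)\det(g_2)$), at the cost of a page of case analysis. One point worth making explicit: algebraic closure is genuinely used when you factor the binary form (and it is also necessary for the statement itself --- over $\bbR$, a pencil with $\det = x^2+y^2$ gives a concise tensor isomorphic to neither $\w_3$ nor $\un_{3,2}$), so your choice to work over the algebraically closed field of \autoref{sec:basic} is not a restriction but the correct reading of the lemma, consistent with how it is applied after the reduction of \autoref{lem:field-ext}.
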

Notice that $\pR(T) = 1$ in cases (b)-(e) and $\pR(T) = 2$ in cases (f),(g). When $\bbK = \bbC$ \autoref{lem:classification} follows essentially from the results of Sylvester~\cite{Sylv:PrinciplesCalculusForms}. When~$\bbK$ is an arbitrary field \autoref{lem:classification} can be obtained using Kronecker's classification of matrix pencils, for which we refer to \cite[Ch.~XII]{Gant:TheoryOfMatrices}.

To continue we will study the following set.
\begin{definition}
Let $n_1, n_2, n_3 \geq 2$. Denote by $\tau(n_1, n_2, n_3) \subseteq \bbK^{n_1} \otimes \bbK^{n_2} \otimes \bbK^{n_3}$ the Zariski closure of the orbit of $\w = e_1 \otimes e_1 \otimes e_2 + e_1 \otimes e_2 \otimes e_1 + e_2 \otimes e_1 \otimes e_1$ under the action of $\GL(\bbK^{n_1}) \times \GL(\bbK^{n_2}) \times \GL(\bbK^{n_3})$.
\end{definition}
By definition, $\tau(n_1,n_2,n_3)$ is Zariski-closed and therefore it is defined by polynomial equations; it is called the \emph{tangential variety} of the Segre variety. The study of this variety and its defining equations is the subject of a long series of works. The defining equation of $\tau(2,2,2) \subseteq \bbK^2 \otimes \bbK^2 \otimes \bbK^2$ is known since Cayley \cite{Cay:TheoryLinTransformations}; it is a polynomial equation $\Cay \in \bbK[\bbK^2 \otimes \bbK^2 \otimes \bbK^2]$ of degree four, usually called the \emph{Cayley hyperdeterminant}. We record this result in the following:
\begin{lemma}\label{lem: cayley hyperdet}
Let $a_0,a_1, b_0,b_1, c_0,c_1$ be bases of three copies of $\bbK^2$. Let $\bbK[t_{ijk}: i,j,k = 0,1] = \bbK[\bbK^2 \otimes \bbK^2 \otimes \bbK^2]$, where $t_{ijk}$ are the coordinates in the induced basis $a_i \otimes b_j \otimes c_k$. Then $\tau(2,2,2) \subseteq \bbP (\bbK^2 \otimes \bbK^2 \otimes \bbK^2)$ is the hypersurface of degree four defined by 
\begin{align*}
 \Cay = &t_{0,1,1}^2t_{1,0,0}^2-2t_{0,1,0}t_{0,1,1}t_{1,0,0}t_{1,0,1}+t_{0,1,0}^2t_{1,0,1}^2 -2t_{0,0,1}t_{0,1,1}t_{1,0,0}t_{1,1,0}\\ 
 &-2t_{0,0,1}t_{0,1,0}t_{1,0,1}t_{1,1,0}+4t_{0,0,0}t_{0,1,1}t_{1,0,1}t_{1,1,0}+t_{0,0,1}^2t_{1,1,0}^2+4t_{0,0,1}t_{0,1,0}t_{1,0,0}t_{1,1,1}\\
 &-2t_{0,0,0}t_{0,1,1}t_{1,0,0}t_{1,1,1}-2t_{0,0,0}t_{0,1,0}t_{1,0,1}t_{1,1,1}-2t_{0,0,0}t_{0,0,1}t_{1,1,0}t_{1,1,1}+t_{0,0,0}^2t_{1,1,1}^2.
\end{align*}
\end{lemma}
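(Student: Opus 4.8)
The statement is classical, going back to Cayley~\cite{Cay:TheoryLinTransformations}; the plan is to prove the two inclusions $\tau(2,2,2) \subseteq \{\Cay = 0\}$ and $\{\Cay = 0\} \subseteq \tau(2,2,2)$, read inside $\bbK^2 \otimes \bbK^2 \otimes \bbK^2$ (or equivalently inside $\bbP(\bbK^2 \otimes \bbK^2 \otimes \bbK^2)$, since the orbit of $\w$ and hence $\tau(2,2,2)$ is a cone). Write $\calO$ for the orbit of $\w$ under $\GL(\bbK^2)^{\times 3}$, so that $\tau(2,2,2) = \bar{\calO}$ by definition.

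For the inclusion $\tau(2,2,2) \subseteq \{\Cay = 0\}$ I would argue as follows. First, substituting the coordinates of $\w$, namely $t_{1,0,0} = t_{0,1,0} = t_{0,0,1} = 1$ and $t_{ijk} = 0$ for all other triples, into the displayed polynomial, one checks that every monomial of $\Cay$ contains at least one vanishing coordinate, so $\Cay(\w) = 0$. Second, $\Cay$ is a relative invariant for the $\GL(\bbK^2)^{\times 3}$-action: one has $\Cay\bigl((g_1,g_2,g_3)\cdot T\bigr) = (\det g_1\,\det g_2\,\det g_3)^{2}\,\Cay(T)$. This is classical; one way to pin down the exponent is that, viewing a tensor through any one of its three flattenings as a $2 \times 4$ matrix acted on by $\GL_2$ on the left, a degree-four semi-invariant must be a product of two $2\times 2$ minors, contributing $(\det)^2$ in that factor. (The identity can alternatively be verified by a direct computation.) Consequently $\Cay$ vanishes identically on $\calO$, hence on the Zariski closure $\bar{\calO} = \tau(2,2,2)$.

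For the reverse inclusion $\{\Cay = 0\} \subseteq \tau(2,2,2)$ I would invoke the orbit classification of \autoref{lem:classification}. Let $T \in \bbK^2 \otimes \bbK^2 \otimes \bbK^2$ satisfy $\Cay(T) = 0$. By \autoref{lem:classification}, $T$ is isomorphic to one of the seven normal forms listed there; since $\Cay(\un_{3,2}) = 1 \neq 0$ and $\Cay$ is a relative invariant, $T$ is not isomorphic to form (g). It remains to check that forms (a)--(f) all lie in $\tau(2,2,2)$. Form (f) is $\w$ itself; form (a) is $0$, which lies in $\tau(2,2,2)$ because the latter is a cone; form (b), i.e.\ $e_1 \otimes e_1 \otimes e_1$, equals $\lim_{\eps\to 0}(\diag(\eps,1)\otimes\id\otimes\id)\,\w$; form (c), i.e.\ $e_1 \otimes e_1 \otimes e_1 + e_1 \otimes e_2 \otimes e_2$, is, up to isomorphism, $\lim_{\eps\to 0}\bigl(\left(\begin{smallmatrix}1 & 0\\ \eps & \eps\end{smallmatrix}\right)\otimes\id\otimes\id\bigr)\,\w$, whose value at $\eps = 0$ is $e_1 \otimes (e_1 \otimes e_2 + e_2 \otimes e_1)$, a rank-one vector tensored with a rank-two matrix, hence isomorphic to form (c); and forms (d) and (e) follow from form (c) by the evident $\frakS_3$-symmetry of $\w$ under permuting the three tensor factors. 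Thus every $T$ with $\Cay(T) = 0$ lies in $\tau(2,2,2)$, and together with the first inclusion this gives $\tau(2,2,2) = \{\Cay = 0\}$, which is the asserted description.

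The nontrivial inputs are the two classical facts used above: the relative invariance of $\Cay$ and the classification of \autoref{lem:classification} (which guarantees the normal forms exhaust $\{\Cay = 0\}$); the degenerations themselves are one-parameter limits and are routine. A more geometric alternative, which sidesteps the case analysis, is to observe that $\tau(2,2,2)$ is an irreducible variety of dimension $6$ --- it is the tangential variety of the Segre threefold $\bbP^1 \times \bbP^1 \times \bbP^1 \subseteq \bbP^7$, and the dimension follows over an algebraically closed field from computing the tangent space to $\calO$ at $\w$ (one finds it is the $7$-dimensional coordinate subspace $\{t_{1,1,1} = 0\}$ of the affine cone) --- while $\Cay$ is an irreducible polynomial, so $\{\Cay = 0\}$ is an irreducible $6$-dimensional hypersurface containing the $6$-dimensional irreducible $\tau(2,2,2)$, forcing equality. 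In that route the main point requiring care is the dimension count in positive characteristic, where $\dim \tau(2,2,2) = 6$ is best read off from the classical description rather than from the naive orbit tangent space, since the scheme-theoretic stabilizer of $\w$ need not be reduced.
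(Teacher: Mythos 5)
The paper gives no proof of this lemma at all: it records the equality $\tau(2,2,2)=\{\Cay=0\}$ as a classical fact going back to Cayley, so your argument is necessarily a different route, and it is a correct one. You derive the set-theoretic statement from \autoref{lem:classification} (which precedes the lemma in the paper, so there is no circularity) together with the relative invariance $\Cay\bigl((g_1,g_2,g_3)T\bigr)=(\det g_1\det g_2\det g_3)^2\,\Cay(T)$: since $\Cay(\w_3)=0$, invariance gives $\tau(2,2,2)\subseteq\{\Cay=0\}$, and conversely $\Cay(\un_{3,2})=1\neq 0$ excludes normal form (g), while your one-parameter degenerations (to be read as algebraic degenerations in the sense of \autoref{def: algebraic degeneration}, since $\bbK$ is an arbitrary algebraically closed field) place forms (a)--(f) in $\tau(2,2,2)$, using that the orbit closure is $\GL_2^{\times 3}$-stable and that the $\frakS_3$-symmetry of $\w_3$ makes $\tau(2,2,2)$ stable under permuting factors. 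What your route buys is a proof that is essentially self-contained given \autoref{lem:classification} and is characteristic-free, because the invariance identity has integer coefficients; what it costs is that this identity is the one classical input you assert rather than verify --- your ``product of two $2\times 2$ minors'' heuristic presupposes the $\SL_2^{\times 3}$-invariance it is meant to explain, so the honest justification is the direct computation you mention in parentheses. Two small corrections: the limit $\lim_{\eps\to 0}(\diag(\eps,1)\otimes\id\otimes\id)\w_3$ equals $e_2\otimes e_1\otimes e_1$, not $e_1\otimes e_1\otimes e_1$ (harmless, by $G$-stability of $\tau(2,2,2)$); and your alternative ``irreducibility plus dimension'' argument genuinely breaks in characteristic $2$, where $\Cay=(t_{000}t_{111}+t_{001}t_{110}+t_{010}t_{101}+t_{011}t_{100})^2$ is a square and hence not irreducible, even though the set-theoretic equality established by your main argument still holds there --- so that alternative should remain a remark, not the proof.
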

Over $\bbC$, a set of equations whose zero set is $\tau(n_1,n_2,n_3)$ was obtained in \cite{HolStu:HyperdeterminantalRelations}; these results were generalized for tensor products in a higher number of factors in \cite{Oed:SetTheoreticTangentialSegre}. Further, in \cite{OedRai:TangentialVarieties}, the affirmative answer to a conjecture of \cite{LanWey:TangentialVarietiesRationalHomVars} was given, providing a complete characterization of all equations vanishing on $\tau(n_1,n_2,n_3)$. 

In \autoref{lemma: set theoretic cayley module}, we give a characteristic free proof of some particular result in \cite{Oed:SetTheoreticTangentialSegre}. Set the following notation:

\begin{itemize}
 \item Let $\calJ(n_1, n_2, n_3) \subseteq \bbK[\bbK^{n_1} \otimes \bbK^{n_2} \otimes \bbK^{n_3}]$ be
the ideal generated by the polynomials $\Cay \circ (\pi_1 \otimes \pi_2 \otimes \pi_3)$ for all $\pi_i \in \End(\bbK^{n_i}, \bbK^2)$.

In other words, $\calJ(n_1, n_2, n_3)$ is the ideal generated by the Cayley hyperdeterminant composed with every restriction from $\bbK^{n_1} \otimes \bbK^{n_2} \otimes \bbK^{n_3}$ to $\bbK ^2 \otimes \bbK^2 \otimes \bbK^2$.
\item Let $\calM(n_1,n_2,n_3)$ be the ideal generated by the $3 \times 3$ minors of the flattenings in $\bbK^{n_1} \otimes \bbK^{n_2} \otimes \bbK^{n_3}$. The ideal $\calM(n_1,n_2,n_3)$ defines the \emph{subspace variety} of multilinear ranks $(2,2,2)$, in the sense of \cite[Sec. 7.1]{Lan:TensorBook}.
\end{itemize}

\autoref{lemma: set theoretic cayley module} below shows that the variety defined by $\calJ(n_1,n_2,n_3)$ is the union of $\tau(n_1,n_2,n_3)$ and the set of tensors of partition rank one. This will lead to the proof of \autoref{thm: subrank 2 for 3tensors}. First, we record a general result showing that defining equations for $\tau(n_1,n_2,n_3)$ are \emph{inherited}, in the sense of \cite{LanMan:IdealSecantVarsSegre}, from equations of $\tau(2,2,2)$; over $\bbC$, this is a consequence of the more general results of \cite{Oed:SetTheoreticTangentialSegre,OedRai:TangentialVarieties}. Given a set of polynomial equations $\calI$, write $\calV(\calI)$ for the variety that it defines.

\begin{lemma}\label{lem:M}
The variety $\tau(n_1,n_2,n_3)$ is the zero set of $\calJ(n_1,n_2,n_3) + \calM(n_1,n_2,n_3)$. In other words 
\[
 \tau(n_1, n_2, n_3) = \calV(\calJ(n_1, n_2, n_3)) \cap \calV(\calM(n_1,n_2,n_3)) .
\]
\end{lemma}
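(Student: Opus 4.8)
The plan is to prove the two inclusions $\tau(n_1,n_2,n_3) \subseteq \calV(\calJ(n_1,n_2,n_3)) \cap \calV(\calM(n_1,n_2,n_3))$ and $\calV(\calJ(n_1,n_2,n_3)) \cap \calV(\calM(n_1,n_2,n_3)) \subseteq \tau(n_1,n_2,n_3)$ separately. Write $G = \GL(\bbK^{n_1}) \times \GL(\bbK^{n_2}) \times \GL(\bbK^{n_3})$ and $G_2 = \GL(\bbK^2) \times \GL(\bbK^2) \times \GL(\bbK^2)$. A useful preliminary remark is that all three sets in play are $G$-stable: this is clear for $\tau(n_1,n_2,n_3)$ and for $\calV(\calM(n_1,n_2,n_3))$ (flattening ranks are $G$-invariant), and it holds for $\calV(\calJ(n_1,n_2,n_3))$ because precomposing a generator $\Cay \circ (\pi_1 \otimes \pi_2 \otimes \pi_3)$ with $g = (g_1,g_2,g_3) \in G$ yields the generator $\Cay \circ ((\pi_1 g_1) \otimes (\pi_2 g_2) \otimes (\pi_3 g_3))$ of $\calJ(n_1,n_2,n_3)$. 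So it suffices to argue up to the $G$-action.

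For $\tau(n_1,n_2,n_3) \subseteq \calV(\calM(n_1,n_2,n_3))$, recall that every flattening of $\w$ has rank exactly $2$, so the $3 \times 3$ minors of the flattenings vanish on the orbit $G \cdot \w$ and hence on its Zariski closure. For $\tau(n_1,n_2,n_3) \subseteq \calV(\calJ(n_1,n_2,n_3))$, fix $\pi_i \in \End(\bbK^{n_i},\bbK^2)$; since $\Cay \circ (\pi_1 \otimes \pi_2 \otimes \pi_3)$ is a polynomial it is enough to check that it vanishes on $G \cdot \w$. For $g \in G$ one has $(\pi_1 \otimes \pi_2 \otimes \pi_3)(g \cdot \w) = (\sigma_1 \otimes \sigma_2 \otimes \sigma_3)\,\w$, where $\sigma_i \colon \bbK^2 \to \bbK^2$ is the restriction of $\pi_i g_i$ to $\langle e_1, e_2 \rangle$ (using that $\w$ only involves the first two basis vectors of each factor); thus this tensor is a restriction of $\w$ via maps $\bbK^2 \to \bbK^2$, and a restriction is in particular a degeneration, so it lies in $\tau(2,2,2)$, the closure of the $G_2$-orbit of $\w$. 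By \autoref{lem: cayley hyperdet}, $\tau(2,2,2) = \calV(\Cay)$, so $\Cay$ vanishes on this tensor; therefore $\Cay \circ (\pi_1 \otimes \pi_2 \otimes \pi_3)$ vanishes on $G \cdot \w$, and hence on $\tau(n_1,n_2,n_3)$.

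For the reverse inclusion, take $T \in \calV(\calJ(n_1,n_2,n_3)) \cap \calV(\calM(n_1,n_2,n_3))$. Being in $\calV(\calM(n_1,n_2,n_3))$ means every flattening of $T$ has rank at most $2$, so by the subspace-variety description (\cite[Sec.~7.1]{Lan:TensorBook}) there are subspaces $A_i \subseteq \bbK^{n_i}$ with $\dim A_i \leq 2$ and $T \in A_1 \otimes A_2 \otimes A_3$. Enlarging each $A_i$ to a $2$-dimensional subspace if needed and applying a suitable $g \in G$ (legitimate by the $G$-stability of $\tau(n_1,n_2,n_3)$ and of $\calV(\calJ(n_1,n_2,n_3))$), we may assume $A_i = \langle e_1, e_2 \rangle$ for every $i$, so $T$ sits in the coordinate block $\bbK^2 \otimes \bbK^2 \otimes \bbK^2 \subseteq \bbK^{n_1} \otimes \bbK^{n_2} \otimes \bbK^{n_3}$. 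With $\pi_i \colon \bbK^{n_i} \to \bbK^2$ the projection onto $\langle e_1, e_2 \rangle$ we get $(\pi_1 \otimes \pi_2 \otimes \pi_3)(T) = T$, and since $T \in \calV(\calJ(n_1,n_2,n_3))$ this forces $\Cay(T) = 0$, i.e.\ $T \in \tau(2,2,2)$ by \autoref{lem: cayley hyperdet}. It remains to note $\tau(2,2,2) \subseteq \tau(n_1,n_2,n_3)$: the $G_2$-orbit of $\w$ is contained in $G \cdot \w$ (extend each $2 \times 2$ matrix by the identity), and the Zariski closure of a subset of the linear, hence closed, subspace $\bbK^2 \otimes \bbK^2 \otimes \bbK^2$ is the same whether computed there or in the ambient space. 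Hence $T \in \tau(n_1,n_2,n_3)$, which completes the argument.

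The step that needs the most care is the reduction to the $2 \times 2 \times 2$ coordinate block in the reverse inclusion: it rests on the precise subspace-variety statement that bounded flattening ranks force the tensor itself into a $2$-dimensional block in each factor (not merely into the closure of such tensors), on the $G$-stability of $\calV(\calJ(n_1,n_2,n_3))$ so that moving this block to $\langle e_1, e_2 \rangle^{\otimes 3}$ is harmless, and on the elementary observation that a restriction of $\w$ is a degeneration of $\w$, which is exactly what reduces everything to \autoref{lem: cayley hyperdet}. The rest is routine manipulation of polynomial ideals and orbit closures.
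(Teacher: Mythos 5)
Your proof is correct and follows essentially the same route as the paper's: the easy inclusion, then using $\calV(\calM)$ to place $T$ in a $2\times2\times2$ block, applying the Cayley hyperdeterminant there via \autoref{lem: cayley hyperdet}, and mapping $\tau(2,2,2)$ back into $\tau(n_1,n_2,n_3)$. The only difference is cosmetic: you normalize the block to $\langle e_1,e_2\rangle^{\otimes 3}$ using $G$-stability, while the paper keeps the subspaces $V_i'$ and uses explicit inclusion maps $\iota_i$ inverse to $\pi_i|_{V_i'}$.
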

\begin{proof}
Let $\calJ = \calJ(n_1,n_2,n_3)$ and $\calM = \calM(n_1, n_2, n_3)$. We have $\calV( \calJ ) \cap \calV(\calM) = \calV(\calJ + \calM)$. The inclusion $\tau(n_1, n_2, n_3) \subseteq \calV(\calJ) \cap \calJ(\calM)$ is clearly true.

To prove the inclusion $\tau(n_1, n_2, n_3) \supseteq \calV(\calJ + \calM)$, consider $T \in \calV(\calJ+\calM)$. Then $T$ is an element of the subspace variety $\Sub_{2,2,2}(V_1 \otimes V_2 \otimes V_3)$; in particular, there exist subspaces $V_i' \subseteq V_i$ with $\dim V'_i = 2$ such that $T \in V_1' \otimes V_2' \otimes V_3'$. 
 
 Choose $\pi_i : V_i \to \bbK^2$ such that $\pi_i|_{V_i'}$ is injective. Since $T \in \calV(\calJ)$, we have that $\Cay$ vanishes at $T' = (\pi_1 \otimes \pi_2 \otimes \pi_3)(T)$ showing that $T' \in \tau(2,2,2)$.
 
Let $\iota_i : \bbK^2 \to V_i' \subseteq V_i$ be the inverse of $\pi_i|_{V'_i}$; in particular $\iota_i$ is injective. It is clear that the orbit-closure of $(\iota_1 \otimes \iota_2 \otimes \iota_3) ( \w_3)$ is $\tau(n_1,n_2,n_3)$; this shows that $(\iota_1 \otimes \iota_2 \otimes \iota_3)(\tau(2,2,2)) \subseteq \tau(n_1,n_2,n_3)$. Then
 \[
  T = (\iota_1 \otimes \iota_2 \otimes \iota_3) \circ (\pi_1 \otimes \pi_2 \otimes \pi_3) (T) \in (\iota_1 \otimes \iota_2 \otimes \iota_3) (\tau(2,2,2)) \subseteq \tau(n_1,n_2,n_3).
 \]
In other words, $T \in \tau(n_1,n_2,n_3)$ as desired.
\end{proof}

\autoref{lemma: set theoretic cayley module} characterizes the variety defined by the ideal $\calJ(n_1,n_2,n_3)$. 

\begin{lemma}\label{lemma: set theoretic cayley module}
We have 
\[
\calV(\calJ(n_1, n_2, n_3)) = \tau(n_1, n_2, n_3)  \cup P(n_1,n_2,n_3)
\]
where $P(n_1,n_2,n_3) = \{ T \in \bbK^{n_1} \otimes \bbK^{n_2} \otimes \bbK^{n_3}: \pR(T) = 1\}$ is the variety of tensors of partition rank one.
\end{lemma}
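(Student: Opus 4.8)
The plan is to prove the two inclusions separately; throughout $\bbK$ is algebraically closed, and I will repeatedly use that by \autoref{lem:classification} and \autoref{lem: cayley hyperdet} the hypersurface $\calV(\Cay)=\tau(2,2,2)$ is exactly the closed $\GL$-orbit of $\w_3$ in $\bbK^2\otimes\bbK^2\otimes\bbK^2$, whose complement is the dense orbit of $\un_{3,2}$ (so every $2\times2\times2$ tensor not isomorphic to $\un_{3,2}$ lies in $\calV(\Cay)$). For the inclusion $\tau(n_1,n_2,n_3)\cup P(n_1,n_2,n_3)\subseteq\calV(\calJ(n_1,n_2,n_3))$ I would argue as follows. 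If $\pR(T)=1$ then some flattening of $T$ has rank one, so every restriction of $T$ to $\bbK^2\otimes\bbK^2\otimes\bbK^2$ also has a flattening of rank $\le 1$; by \autoref{lem:classification} such a $2\times2\times2$ tensor is isomorphic to one of the cases (a)--(e), all of which lie in $\calV(\Cay)$, hence $\Cay$ vanishes on every $2\times2\times2$ restriction of $T$, i.e.\ $T\in\calV(\calJ)$. If instead $T\in\tau(n_1,n_2,n_3)$, fix $\pi_i\in\End(\bbK^{n_i},\bbK^2)$; the polynomial $\Cay\circ(\pi_1\otimes\pi_2\otimes\pi_3)$ vanishes at every point $(g_1,g_2,g_3)\cdot\w_3$ of the orbit, because $(\pi_1g_1\otimes\pi_2g_2\otimes\pi_3g_3)\w_3$ is a restriction of $\w_3$ — which is supported on a $2\times2\times2$ corner — and hence lies in $\tau(2,2,2)=\calV(\Cay)$; therefore it vanishes on the Zariski closure $\tau(n_1,n_2,n_3)$ of the orbit as well.

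For the inclusion $\calV(\calJ)\subseteq\tau\cup P$, let $T\in\calV(\calJ(n_1,n_2,n_3))$. If $\pR(T)\le 1$ then $T=0\in\tau(n_1,n_2,n_3)$ or $T\in P(n_1,n_2,n_3)$, so assume $\pR(T)\ge 2$, equivalently $\rk(T_p)\ge 2$ for all $p$. By \autoref{lem:M} we have $\tau(n_1,n_2,n_3)=\calV(\calJ)\cap\calV(\calM)$, so it suffices to prove $\rk(T_p)=2$ for every $p$. For this I will prove: \emph{if $\pR(T)\ge 2$ and $\rk(T_1)\ge 3$ (say), then $\subrank(T)\ge 2$}; this contradicts $T\in\calV(\calJ)$, since $T\ge\un_{3,2}$ would exhibit a $2\times2\times2$ restriction of $T$ on which $\Cay$ does not vanish, whence $\rk(T_p)=2$ for all $p$. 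To prove the displayed statement, set $U=\im(T_1)\subseteq\bbK^{n_2}\otimes\bbK^{n_3}$, a subspace of matrices of dimension $\ge 3$. Since $\rk(T_2),\rk(T_3)\ge 2$, \autoref{lemma: rank one subspaces} shows that $U$ is not a space of rank-one matrices, so it contains a matrix of rank $\ge 2$; the same then holds for a generic $3$-dimensional subspace $U_0\subseteq U$, which I realize as the flattening image $\im(T'_1)$ of a generic restriction $T'\in\bbK^3\otimes\bbK^{n_2}\otimes\bbK^{n_3}$ of $T$ with $\rk(T'_1)=3$. A dimension count — using that the rank-one locus of $\bbP(U_0)$ is proper, hence at most $1$-dimensional, so the column-directions of rank-one elements of $U_0$ cannot fill up a generic codimension-$2$ subspace — then shows that for generic $\pi_2:\bbK^{n_2}\to\bbK^2$ and $\pi_3:\bbK^{n_3}\to\bbK^2$ the image $\bar U:=(\pi_2\otimes\pi_3)(U_0)\subseteq\bbK^2\otimes\bbK^2$ is again $3$-dimensional (one first compresses the $\bbK^{n_2}$-factor, noting the compressed space still has a rank-$\ge2$ element, then the $\bbK^{n_3}$-factor). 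Now $\bar U$ is a $3$-dimensional subspace of $2\times2$ matrices, so it is not contained in the determinant quadric $\{\det=0\}$ (a smooth quadric surface in $\bbP^3$, in every characteristic, contains no plane), and a general line of $\bbP(\bar U)\cong\bbP^2$ meets the conic $\{\det=0\}\cap\bbP(\bar U)$ in two distinct points; the corresponding $2$-plane $W\subseteq\bar U$ is thus a pencil on which $\det(sA+tB)$ is a nonzero binary quadratic form with distinct roots. Finally, pulling $W$ back through the first factor (choosing $\pi_1':\bbK^3\to\bbK^2$ whose induced flattening map has image $W$) yields a $2\times2\times2$ restriction of $T$ whose first flattening has rank $2$ and determinant form with distinct roots, hence a tensor isomorphic to $\un_{3,2}$ by \autoref{lem:classification}; so $\subrank(T)\ge 2$, as wanted.

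The genuinely delicate points, where I expect to spend the most care, are: (i) the dimension count guaranteeing that a generic compression $\bbK^{n_2}\otimes\bbK^{n_3}\to\bbK^2\otimes\bbK^2$ does not drop $\dim U_0$ below $3$, which requires controlling the rank-one loci of $U_0$ and of its intermediate compression via \autoref{lemma: rank one subspaces}; and (ii) the characteristic-free plane geometry of the determinant quadric in $\bbP(\bbK^2\otimes\bbK^2)$ — in particular that a plane section is a smooth conic or a pair of \emph{distinct} lines but never a double line, even in characteristic $2$ — which is exactly what guarantees that every $3$-dimensional subspace of $2\times2$ matrices contains a pencil with separable determinant form. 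The remaining ingredients — manipulations of restrictions and orbit closures, and the cited \autoref{lem:classification}, \autoref{lem:M} and \autoref{lemma: rank one subspaces} — are routine bookkeeping.
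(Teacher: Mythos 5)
Your proposal is correct in substance but follows a genuinely different route from the paper. The paper also reduces the hard inclusion to \autoref{lem:M}, but then proceeds by explicit computation: it reduces to $n_1=n_2=n_3=3$, normalizes a generic restriction to $\w_3$ via \autoref{prop: good flattenings under restriction} and \autoref{lem:classification}, and evaluates $\Cay$ on a long list of coordinate restrictions $\pi_{ijk}$ to successively pin down the coefficients $\theta_{ijk}$, ending with a six-parameter family whose $3\times 3$ flattening minors vanish identically, whence $T$ lies in the subspace variety and so in $\tau(n_1,n_2,n_3)$. You instead establish membership in the subspace variety abstractly, via the intermediate claim that $\pR(T)\geq 2$ together with $\rk(T_p)\geq 3$ for some $p$ forces $T \geq \un_{3,2}$, which is incompatible with $T \in \calV(\calJ)$ since $\Cay(\un_{3,2})\neq 0$; the claim itself is proved by compressing a $3$-dimensional subspace of $\im(T_p)$ containing a rank-$\geq 2$ matrix into $\bbK^2\otimes\bbK^2$ and then extracting a pencil with separable determinant. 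Both of your flagged delicate points do go through: the plane sections of the determinant quadric are never double lines (the two $\GL_2\times\GL_2$-orbits of hyperplanes in the space of $2\times2$ matrices give $-bc$ and $a^2\pm bc$ up to coordinates, neither a square, even in characteristic $2$, where one must also avoid the strange point of the conic when choosing the line), and the genericity count works — though note that your stated justification only addresses rank-one elements of $U_0$ being annihilated; you must also rule out a rank-$\geq 2$ element landing in the kernel of the compression, which is fine because for a fixed such element this is a codimension-$\geq 3$ (in the one-factor-at-a-time version, codimension-$4$) condition on the projections while these elements form at most a two-parameter family in $\bbP(U_0)$. What each approach buys: the paper's argument is elementary and fully explicit but is a coefficient computation whose final verification is left to the reader; yours avoids all coefficient bookkeeping and, as a by-product, independently proves a piece of \autoref{thm: subrank 2 for 3tensors} (partition rank $\geq 2$ plus a flattening of rank $\geq 3$ implies subrank $\geq 2$), at the price of the Grassmannian dimension counts and the characteristic-free conic geometry, which in your write-up are still sketches and would need to be carried out in full.
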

\begin{proof}
Since $P(2,2,2) \subseteq \tau(2,2,2)$, it is clear that 
\[
\tau(n_1, n_2, n_3)  \cup P(n_1,n_2,n_3) \subseteq \calV(\calJ(n_1, n_2, n_3)).
\]

To show the other inclusion, it is enough to show that if $T \in \calV(\calJ(n_1, n_2, n_3))$ and $\pR(T) \geq 2$, then $T \in \tau(n_1,n_2,n_3)$. By \autoref{lem:M}, this is equivalent to showing that if $T \in \calV(\calJ(n_1, n_2, n_3))$ and $\pR(T) \geq 2$, then $T \in \Sub_{2,2,2}$, namely the $3 \times 3$ minors of the flattenings vanish on $T$. Since every projection $\pi_j: \bbK^{n_j} \to \bbK^2$ factors through a three dimensional space $\bbK^3$, we may assume $n_1=n_2=n_3=3$.

Fix bases $a_0 \vvirg a_2$, $b_0 \vvirg b_2$, $c_0 \vvirg c_2$ of the three copies of $\bbK^3$; with abuse of notation, let $a_0,a_1$, $b_0,b_1$, $c_0,c_1$ be bases of the three copies of $\bbK^2$. We will be interested in coordinate restrictions $\pi : \bbK^3 \to \bbK^2$ mapping one of the three basis vectors to $0$ and the other two basis vectors to the two basis vectors of $\bbK^2$. Write $\pi_{A,i}$ (resp. $\pi_{B,i}$, $\pi_{C,i}$) for (any) restriction mapping $a_i$ (resp. $b_i$, $c_i$) to $0$. Let $\pi_{ijk} = \pi_{A,i} \otimes \pi_{B,i} \otimes \pi_{C,i}$.

Let $T \in \calV(\calJ(3,3,3))$ with $\pR(T) \geq 2$. Fix generic restrictions $\pi_A,\pi_B,\pi_C : \bbK^{3} \to \bbK^2$: by \autoref{prop: good flattenings under restriction}, we have $\pR((\pi_A \otimes \pi_B \otimes \pi_C)(T)) =2$, and since $T \in \calV(\calJ)$, we have $(\pi_A \otimes \pi_B \otimes \pi_C)(T) \in \tau(2,2,2)$. By Lemma \ref{lem:classification}, after possibly changing coordinates in $\bbK^2 \otimes \bbK^2 \otimes \bbK^2$, we may assume $(\pi_A \otimes \pi_B \otimes \pi_C)(T) = \w_3$. Moreover, after possibly changing coordinates in $\bbK^{3} \otimes \bbK^{3} \otimes \bbK^{3}$, we may assume that $\pi_A = \pi_{A,2}, \pi_B = \pi_{B,2}, \pi_C = \pi_{C,2}$. In other words, we can write
\[
 T = \w_3 + \sum_{\substack{i,j,k = 0 \vvirg 2 \\ \text{ at least one $2$}}} \theta_{ijk} a_i \otimes b_j \otimes c_k
\]
for some coefficients $\theta_{ijk}$. 

One can directly verify that 
\begin{align*}
\Cay( \pi_{022}(T)) &= \theta_{211}^2 , \\
\Cay( \pi_{202}(T)) &= \theta_{121}^2 , \\
\Cay( \pi_{220}(T)) &= \theta_{112}^2 ,
\end{align*}
which imply $\theta_{211} = \theta_{121} = \theta_{112} = 0$ because $T \in \calV(\calJ(3,3,3))$. Imposing these conditions, one can further verify
\begin{align*}
\Cay( \pi_{122}(T)) &= (\theta_{201} - \theta_{210})^2  , \\
\Cay( \pi_{212}(T)) &= (\theta_{021} - \theta_{120})^2 , \\
\Cay( \pi_{221}(T)) &= (\theta_{012} - \theta_{102})^2,  
\end{align*}
which imply $\theta_{201} = \theta_{210},\theta_{021} = \theta_{120},\theta_{012} = \theta_{102}$. Imposing these conditions, we obtain 
\begin{align*}
\Cay( \pi_{210} (T)) &= (\theta_{122} - \theta_{102}\theta_{120})^2,\\
\Cay( \pi_{021} (T)) &= (\theta_{212} - \theta_{012}\theta_{210})^2, \\
\Cay( \pi_{102} (T)) &= (\theta_{221} - \theta_{210}\theta_{120})^2. 
\end{align*}
This allows us to express $\theta_{122},\theta_{212},\theta_{221}$ in terms of the other coefficients. Further, we obtain 
\begin{align*}
 \Cay( \pi_{112} (T)) &= (\theta_{220} - [\theta_{120}\theta_{200}+\theta_{020}\theta_{210}])^2\\
 \Cay( \pi_{121} (T)) &= (\theta_{202} - [\theta_{102}\theta_{200}+\theta_{002}\theta_{210}])^2\\
 \Cay( \pi_{211} (T)) &= (\theta_{022} - [\theta_{020}\theta_{102}+\theta_{002}\theta_{120}])^2,
\end{align*}
which allows us to express $\theta_{220},\theta_{202},\theta_{022}$ in terms for the other coefficients. Finally, we have 
\[
 \Cay( \pi_{011} (T)) = (\theta_{222} - [\theta_{102}\theta_{220}+\theta_{002}\theta_{221}])^2.
\]
These identities allow us to express the coefficients of $T$ depending only on the six parameters $s_1 ,s_2,s_3, p_1,p_2,p_3$, as follows
\begin{align*}
 T = &a_0\otimes b_0 \otimes c_1 + a_0\otimes b_1 \otimes c_0 + a_1\otimes b_0 \otimes c_0\\  
 &+ s_1 \cdot a_2\otimes b_0 \otimes c_0 + s_2 \cdot a_0\otimes b_2 \otimes c_0 + s_3 \cdot a_0\otimes b_0 \otimes c_2  \\
     &+ p_1 (a_2 \otimes b_0 \otimes c_1 + a_2 \otimes b_1 \otimes c_0) + p_2 (a_1 \otimes b_2 \otimes c_0 + a_0 \otimes b_2 \otimes c_1)\\
     &+ p_3 (a_0 \otimes b_1 \otimes c_2 + a_1 \otimes b_0 \otimes c_2) \\
     &+(s_2 p_3 + s_3 p_2) a_0 \otimes b_2 \otimes c_2 + (s_1 p_3 + s_3 p_1) a_2 \otimes b_0 \otimes c_2 + (s_1 p_2 + s_2 p_1) a_2 \otimes b_2 \otimes c_0  \\
     &+ p_2 p_3 \cdot a_1 \otimes b_2 \otimes c_2 + p_1 p_3 \cdot a_2 \otimes b_1 \otimes c_2 + p_1 p_2 \cdot a_2 \otimes b_2 \otimes c_1  \\
     &+ (s_1p_2p_3 + p_1 s_2 p_3 + p_1p_2s_3) a_2 \otimes b_2 \otimes c_2.
\end{align*}
One can verify that the $3 \times 3$ minors of the flattenings of the tensor $T$ are identically $0$ as polynomials in $s_1 ,s_2,s_3,p_1,p_2,p_3$. This shows that $T \in \Sub_{2,2,2}$, and therefore $T \in \tau(3,3,3)$ by \autoref{lem:M}, as desired.
\end{proof}

\begin{proof}[Proof of \autoref{thm: subrank 2 for 3tensors}]
Let $T \in \bbK^{n_1} \otimes \bbK^{n_2} \otimes \bbK^{n_3}$. If $n_i \leq 1$ for any $i$, then we are in case (a).
Suppose $n_i \geq 2$ for all $i$.
Let $\pi_i : \bbK^{n_i} \to \bbK^2$ be generic linear maps. Let $T' = (\pi_1 \otimes \pi_2 \otimes \pi_3)T$.

Suppose that $\pR(T') = 1$. Then by \autoref{prop: good flattenings under restriction} $\pR(T) = 1$. In this case, $T$ is in case~(a).

Suppose that $\pR(T') = 2$. Then by the orbit-classification, see \autoref{lem:classification}, $T'$ is isomorphic to $\un_{3,2}$ or to $\w_3$. Suppose that $T'$ is isomorphic to $\un_{3,2}$. Then in particular $T \geq \un_{3,2}$, hence $T$ falls in case (c). Suppose that $T'$ is isomorphic to $\w_3$. Then by \autoref{lem: cayley hyperdet}, we have $\Cay(T') = \Cay(\w_3) = 0$. Therefore
\[
( \Cay \circ (\pi_1 \otimes \pi_2 \otimes \pi_3)) (T) = \Cay(T') = 0.
\]
Thus the Cayley hyperdeterminant vanishes on the generic restriction of $T$ to $\bbK^2 \otimes \bbK^2 \otimes \bbK^2$ and thus on all such restrictions by semicontinuity. Thus $T \in \calV(\calJ(n_1, n_2, n_3)) = \tau(n_1,n_2,n_3) \cup P(n_1,n_2,n_3)$ by \autoref{lemma: set theoretic cayley module}. Since $\pR(T') = 2$, $T \notin P(n_1,n_2,n_3)$, so $T \in \tau(n_1,n_2,n_3)$. Since $\tau(n_1,n_2,n_3)$ is the orbit-closure of $\w_3$, we conclude that $\w_3 \degengeq T$. We already knew that $T \geq \w_3$. Hence $T$ falls in case (b).
\end{proof}

\begin{proof}[Proof of \autoref{th:asympsubtrich}]
By \autoref{lem:field-ext} we may without loss of generality assume $\bbK$ is algebraically closed.

If $\pR(T) = 1$, then $\rmQ(T^{\boxtimes N}) = 1$ for every $N$. 

If $T$ is isomorphic to $\w_3$, then $\rmQ(T^{\boxtimes N}) = c_3^{N - o(N)}$ by \autoref{lem:subrank-w}. 

If $\pR(T) \geq 2$ and $T$ is not isomorphic to $\w_3$, then \autoref{thm: subrank 2 for 3tensors} guarantees that $T \geq \un_{3,2}$, therefore $\rmQ(T) \geq 2$ and $\rmQ(T^{\boxtimes N}) \geq 2^N$ for every $N$.
\end{proof}

The proof of \autoref{thm: asy rank 3 tensors} follows immediately.

\section{Open problems}\label{sec:open_problems}

Our results naturally lead to several open problems which we briefly discuss in this section.

\begin{enumerate}
    \item For every $k \geq 2$, \autoref{th:asysubrank} shows that the smallest possible values for the asymptotic subrank~$\aQ(T) = \lim_{N \to \infty} \subrank(T^{\boxtimes N})^{1/N}$ of any $k$-tensor $T$ are the values
    \begin{align*}
    q_0&= 0\\
    q_1&= 1\\
    q_2&= c_k
    \end{align*}
    where $c_k = k/(k-1)^{k/(k-1)} = 2^{h(1/k)}$ with $h$ denoting the binary entropy function.
    Is there again a gap between $q_2$ and the next possible value of the asymptotic subrank? The answer is known to be ``yes'' over finite fields \cite[Corollary~1.4.3]{BlaDraRup:TensorRestrictionFiniteFields}. Or is $q_2$ an accumulation point for the possible values of the asymptotic subrank? 
    \item It is natural to specialize the previous question to small values of $k$. For tensors of order $k=2$ (matrices) the situation is completely understood, and the possible values of the asymptotic subrank are precisely the natural numbers. For tensors of order $k = 3$ the situation is already much more complicated and the answer to the above questions is not known. \autoref{thm: asy rank 3 tensors} show that the smallest possible values of the asymptotic subrank are 
    \begin{align*}
        q_0 &= 0\\
        q_1 &= 1\\ 
        q_2 &= c_3 \mathrlap{ {}\approx 1.88988}\\
        q_3 &= 2.
    \end{align*} 
    What is the next possible value? It is known that there exists a tensor $T$ of order three with $\aQ(T) \approx 2.68664$\cite[page~169]{strassen1991degeneration} and that there exists a tensor $T$ of order three with $\aQ(T) \approx 2.7551$ \cite[page~132]{strassen1991degeneration}. 
    \item As the main ingredient for the proof of \autoref{th:subrank-gap} we prove that for any $k$-tensor $T$, no flattening of $T$ has rank one if and only if $T \degengeq \w_k$. We may phrase this as a ``degeneration duality'' as follows. For any tensor $T$, there is no degeneration $S \degengeq T$ for any partition rank one tensor $S$ if and only if $T \degengeq \w_k$. In other words, the existence of certain degenerations corresponds to the non-existence of other degenerations. What other degeneration dualities for tensors exist?
\end{enumerate}

\begin{remark}
     Progress on these open problems was achieved after the submission of this work. In \cite{BrChrLeiShpZu:Discreteness} the authors proved discreteness of several asymptotic tensor parameters for 3-tensors in several regimes. In particular, they proved this for the asymptotic subrank and asymptotic slice rank over finite fields, and for asymptotic slice rank over the field of complex numbers.

     Finally, \cite{GesZui:NextGap} determined the next element in the set of values of asymptotic subrank. If $T$ is a $3$-tensor with $\aQ(T) > 2$, then $\aQ(T) \geq q_4$, where $q_4 \approx  2.68664$ is the asymptotic subrank of the structure tensor of the null-algebra of dimension $3$ \cite{strassen1991degeneration}.
     \end{remark}

\paragraph{Acknowledgements.}
 M.C.\ acknowledges financial support from the European Research Council (ERC Grant Agreement No.\ 81876), VILLUM FONDEN via the QMATH Centre of Excellence (Grant No.\ 10059) and the Novo Nordisk Foundation (grant NNF20OC0059939 ``Quantum for Life''). J.Z.\ was supported by a Simons Junior Fellowship and NWO Veni grant VI.Veni.212.284. This work is partially supported by  the Thematic Research Programme ``Tensors: geometry, complexity and quantum entanglement'', University of Warsaw, Excellence Initiative - Research University and the Simons Foundation Award No.\ 663281 granted to the Institute of Mathematics of the Polish Academy of Sciences for the years 2021--2023. We thank D.\ Agostini and J.\ Jelisiejew for helpful discussions regarding the geometry of curves in Grassmannians and on the equivalence between algebraic and topological degeneration.

{\small
\bibliographystyle{alphaurl}
\bibliography{subrank}
}

\end{document}